\definecolor{bleu_sombre}{rgb}{0,0,0.6}  \definecolor{rouge_sombre}{rgb}{0.8,0,0}\definecolor{vert_sombre}{rgb}{0,0.6,0}
\theoremstyle{plain}
\newtheorem{theorem}{{Theorem}}[section] 
\newtheorem*{theorem*}{{Theorem}}
\newtheorem{proposition}[theorem]{Proposition}
\newtheorem*{proposition*}{Proposition}
\newtheorem{corollary}[theorem]{Corollary}
\newtheorem*{corollary*}{Corollary}
\newtheorem{lemma}[theorem]{Lemma}
\newtheorem*{lemma*}{Lemma}
\theoremstyle{definition}
\newtheorem{definition}[theorem]{Definition}
\newtheorem*{definition*}{Definition}
\theoremstyle{remark}
\newtheorem{remark}[theorem]{Remark}
\newcommand{\comm}[1] {{\color{blue} \bf (#1)}}
\newcommand {\limt}[2]{\xrightarrow[#1 \to #2]{}}
\newcommand{\abs}[1]{\left\vert #1\right\vert}        
\newcommand{\nr}[1]{\left\Vert #1\right\Vert}         
\newcommand{\innp}[2]{\left< #1 , #2 \right>}         
\newcommand{\pppg}[1] {\left< #1 \right>}
\newcommand{\set}[1]{\left\{ #1 \right\}}		
\newcommand{\Ii}[2] {\left\{ #1,\dots,#2 \right\}}
\renewcommand{\leq}{\leqslant}	\renewcommand{\geq}{\geqslant}
\newcommand{\inv}{^{-1}}
\newcommand{\1}{\mathds 1}
\newcommand{\st}{\,:\,}
\renewcommand{\Re}{\mathsf{Re}}        
\renewcommand{\Im}{\mathsf{Im}}
\newcommand{\Dom}{\mathsf{Dom}}
\newcommand{\Id}{\mathsf{Id}} 
\newcommand{\divg}{\mathop{\rm{div}}\nolimits}
\newcommand{\R}{\mathbb{R}}		\newcommand{\C}{\mathbb{C}}
\newcommand{\N}{\mathbb{N}}		
\newcommand{\DD}{\mathbb{D}}
\renewcommand{\a}{\alpha}\renewcommand{\b}{\beta}\newcommand{\g}{\gamma}\newcommand{\G}{\Gamma}\renewcommand{\d}{\delta}\newcommand{\D}{\Delta}\newcommand{\e}{\varepsilon}\newcommand{\z}{\zeta} \newcommand{\y}{\eta}\renewcommand{\th}{\theta}\newcommand{\Th}{\Theta}\renewcommand{\l}{\lambda}\newcommand{\m}{\mu}\newcommand{\x}{\xi}\newcommand{\s}{\sigma}\renewcommand{\t}{\tau}\newcommand{\f}{\varphi}\newcommand{\vf}{\phi}\newcommand{\p}{\psi}\renewcommand{\o}{\omega}
\newcommand{\Ac}{{\mathcal A}}\newcommand{\Cc}{{\mathcal C}}\newcommand{\Dc}{{\mathcal D}}\newcommand{\Hc}{{\mathcal H}}\newcommand{\Ic}{{\mathcal I}}\newcommand{\Kc}{{\mathcal K}}\newcommand{\Lc}{{\mathcal L}}\newcommand{\Nc}{{\mathcal N}}\newcommand{\Rc}{{\mathcal R}}\newcommand{\Sc}{{\mathcal S}}\newcommand{\Vc}{{\mathcal V}}\newcommand{\Wc}{{\mathcal W}}
\newcommand{\stepp}{\noindent {\bf $\bullet$}\quad }
\newcommand{\detail}[1]
{
}
\begin{document}

\newcommand{\HH}{\mathscr H}\newcommand{\LL}{\mathscr L}\newcommand{\EE}{\mathscr E} \newcommand{\SSS}{\mathscr S}

\newcommand{\Pa}{P_{\mathsf {a}}} 
\newcommand{\Ra}{R_{\mathsf {a}}}
\newcommand{\Rs}{R_{\mathsf {s}}}
\newcommand{\PRinf}{P_{\mathsf{R},\infty}}

\newcommand{\Opw}{{\mathop{\rm{Op}}}_h^w}
\newcommand{\Opwn}{{\mathop{\rm{Op}}}_{h_n}^w}

\newcommand{\nul}{{\nu_l}}
\newcommand{\nur}{{\nu_r}}
\newcommand{\nus}{{\nu_*}}
\newcommand{\anul}{{\abs \nul}}
\newcommand{\anur}{{\abs \nur}}
\newcommand{\anus}{{\abs \nus}}
\newcommand{\tnul}{{\tilde \nu_l}}
\newcommand{\tnur}{{\tilde \nu_r}}
\newcommand{\tnus}{{\tilde \nu_*}}
\newcommand{\mul}{{\mu_l}}
\newcommand{\mur}{{\mu_r}}
\newcommand{\Rci}{\Rc^\y}
\newcommand{\tdelta}{\s}
\newcommand{\Phil}{\Phi_l(z)}
\newcommand{\Phir}{\Phi_r(z)}
\newcommand{\tPhil}{\tilde \Phi_l(z)}
\newcommand{\tPhir}{\tilde \Phi_r(z)}
\newcommand{\tVc}{\tilde \Vc}

\newcommand{\ww}{w}
\newcommand{\ad}{\mathsf{ad}}\newcommand{\Ad}{\mathsf{Ad}}

\newcommand{\aright}{\underrightarrow{\mathfrak a}}
\newcommand{\aleft}{\underleftarrow{\mathfrak a}}

\newcommand{\tWc}{\widetilde \Wc}

\newcommand{\DI}{\mathbb D_{\mathsf I}}
\newcommand{\DR}{\mathbb D_{\mathsf R}}

\newcommand{\A}{A} \newcommand{\Ao}{{A_0}}

\newcommand{\Qp}{Q_+} \newcommand{\Qpp}{Q_\bot^+}

\newcommand{\PR}{P_{\mathsf{R}}}

\author{Julien Royer}
\address{Institut de Math\'ematiques de Toulouse -- UMR 5219 -- Universit\'e Toulouse 3, CNRS -- UPS, F-31062 Toulouse Cedex 9, France.}
\email{julien.royer@math.univ-toulouse.fr }

\title[Low frequency asymptotics for the Schr\"odinger equation]{Low frequency asymptotics and local energy decay for the Schr\"odinger equation}

\subjclass[2010]{47N50, 47A10, 35B40, 47B44, 35J05}


\begin{abstract}
We prove low frequency resolvent estimates and local energy decay for the Schr\"odinger equation in an asymptotically Euclidean setting. More precisely, we go beyond the optimal estimates by comparing the resolvent of the perturbed Schr\"odinger operator with the resolvent of the free Laplacian. This gives the leading term for the developpement of this resolvent when the spectral parameter is close to 0. For this, we show in particular how we can apply the usual commutators method for generalized resolvents and simultaneously for different operators. Finally, we deduce similar results for the large time asymptotics of the corresponding evolution problem. 
\end{abstract}

\maketitle


\section{Introduction and statement of the main results}

Let $d \geq 2$. We consider on $\R^d$ the Schr\"odinger equation  
\begin{equation} \label{Schrodinger}
\begin{cases}
-i \partial_t u + P u = 0, & \quad \text{on } \R_+ \times \R^d,\\
u_{|t = 0} = f, & \quad \text{on } \R^d,
\end{cases}
\end{equation}
where $f \in L^2$ and $P$ is a general Laplace operator. More precisely we set
\begin{equation} \label{def-L}
P = - \frac 1 {w(x)} \divg G(x) \nabla,
\end{equation}
where $w(x)$ and the symmetric matrix $G(x)$ are smooth and uniformly positive functions: there exists $C \geq 1$ such that for all $x \in \R^d$ and $\xi \in \R^d$ we have 
\[
C\inv \abs \xi^2 \leq \innp{G(x) \xi}{\xi}_{\R^d} \leq C \abs \xi^2 \quad \text{and} \quad C\inv \leq w(x) \leq C.
\]
We assume that $P$ is associated to a long range perturbation of the flat metric. This means that $G(x)$ and $w(x)$ are long range perturbations of $\Id$ and 1, respectively, in the sense that for some $\rho_0 \in ]0,1]$ there exist constants $C_\a > 0$, $\a \in \N^d$, such that for all $x \in \R^d$,
\begin{equation} \label{hyp-swa}
\big| \partial^\a (G(x) - \Id) \big| + \big|\partial^\a (w(x) - 1)\big| \leq C_\a \pppg x^{-\rho_0 - \abs \a}.
\end{equation}
Here and everywhere below we use the standard notation $\pppg x = (1 + \abs x^2)^{\frac 12}$. We also denote by $\D_G$ the Laplace operator in divergence form corresponding to $G$:
\[
\D_G = \divg G(x) \nabla.
\]

This definition of $P$ includes in particular the cases of the free Laplacian, a Laplacian in divergence form, or a Laplace-Beltrami operator. We recall that the Laplace-Belbrami operator associated to a metric $\mathsf g = (g_{j,k})_{1\leq j,k\leq d}$ is given by 
\[
P_{\mathsf g}  = - \frac 1 {\abs{g(x)}^{\frac 12}} \sum_{j,k=1}^d \frac {\partial}{\partial x_j} \abs{g(x)}^{\frac 12} g^{j,k}(x) \frac {\partial}{\partial x_k},
\]
where $\abs{g(x)} = \abs{\det(\mathsf g(x))}$ and $(g^{j,k}(x))_{1\leq j,k\leq d} = \mathsf g(x)\inv$. Then $P_{\mathsf g}$ is of the form \eqref{def-L} with $w = \abs{g}^{\frac 12}$ and $G = \abs{g}^{\frac 12} \mathsf g\inv$.\\

After a Fourier transform with respect to time, \eqref{Schrodinger} can be rewritten as a frequency dependent (stationary) problem. In this paper, we are mainly interested in the contribution of low frequencies. More precisely, we study the behavior of the corresponding resolvent and its powers when the spectral parameter approaches 0. Then, using the already known results for the contribution of high frequencies, we will discuss the large time behavior of the solution of \eqref{Schrodinger}.\\

The operator $P$ is defined on $L^2$ with domain $H^2$. Its spectrum is the set $\R_+$ of non-negative real numbers. We are interested in the properties of the resolvent $(P-\z)\inv$ (and its powers) when $\z$ is close to $\R_+$. The limiting absorption principle (limit of the resolvent when $\z$ goes to some $\l > 0$) is an important topic in mathematical physics and is now well understood. In particular, it is known that if $K$ is a compact subset of $\C^*$, then for $n \in \N^*$ and $\d > n - \frac 12$ the operator 
\[
\pppg{x}^{-\d} (P-\z)^{-n} \pppg {x}^{-\d}
\]
is uniformly bounded in $\Lc(L^2)$ for $\z \in K \setminus \R_+$. 
From this result, we can deduce that the contribution of a compact interval of positive frequencies for the time dependant problem decays faster than any negative power of time in suitable weighted $L^2$-spaces.

The contribution of high frequencies for \eqref{Schrodinger} depends on the properties of $(P-\z)^{-n}$ for $\z$ large ($\Re(\z) \gg 1$ and $0 < \Im(\z) \ll 1$). These properties depend themselves on the geometry of the problem, and more precisely on the classical trajectories of the corresponding Hamiltonian problem. 

We always have as much decay for the solution of \eqref{Schrodinger} as we wish if we allow a loss of regularity for the initial data. This decay is in fact uniform in weighted $L^2$-spaces under the usual non-trapping condition. We denote by $\vf^t$ the geodesic flow corresponding to the metric $G\inv$ on $\R^{2d} \simeq T^*\R^d$. For $(x_0,\x_0) \in \R^{2d}$ and $t \in \R$ we set $\vf^t(x_0,\x_0) = (x(t,x_0,\x_0),\xi(t,x_0,\x_0))$. Then we have non-trapping if all the classical trajectories escape to infinity:
\begin{equation}\label{hyp-non-trapping}
\forall (x_0,\x_0) \in \R^{d} \times (\R^d \setminus \set 0), \quad \abs{x(t,x_0,\x_0)} \limt {t}{\pm \infty} +\infty.
\end{equation}

We set 
\[
\C_+ = \set{\z \in \C \, : \, \Im(\z) > 0}, \quad \DD = \set{\z \in \C \, : \, \abs \z \leq 1}, \quad \DD_+ = \DD \cap \C_+.
\]

Under the assumption \eqref{hyp-non-trapping}, it is known that for $n \in \N^*$ and $\d > n - \frac 12$ there exists $c > 0$ such that for $\z \in \C \setminus (\R_+ \cup \DD)$ we have 
\begin{equation} \label{estim-P-high-freq}
\nr{\pppg x^{-\d} (P-\z)^{-n} \pppg x^{-\d}}_{\Lc(L^2)} \leq \frac c  {\abs \z^{\frac n 2}}.
\end{equation}

The proof is based on semiclassical analysis. We refer for instance to \cite{robertt87} for a Schr\"odinger operator with a potential, to \cite{Burq02} for a general compactly supported perturbation of the Laplacian in an exterior domain and to \cite{Bouclet11} for a long range perturbation of the flat metric.\\

The analysis of low frequencies is more recent.
We first recall that given $R > 0$ the behavior of the localized resolvent for the free Laplacian at $\z \in \C \setminus \R_+$ is given by 
\begin{equation}\label{eq:res-P0}
\nr{\1_{B(R)} (P_0 - \z)^{-n} \1_{B(R)}}_{\Lc(L^2)} \lesssim \begin{cases} \abs{\z}^{\min(0,\frac d 2 -n)} & \text{if } n \neq \frac d 2,\\ \abs{\log (\z)} & \text{if } n = \frac d 2. \end{cases}
\end{equation}

Estimates of the resolvent near 0 for a long range perturbation of the free Laplacien have first been proved in \cite{Bouclet11b} (operator in divergence form), \cite{bonyh10} (Laplace-Beltrami operator) and \cite{Bouclet11} (estimates for the powers of the resolvent). Earlier papers also considered the limiting absorption principle at zero energy in some particular settings (see for instance \cite{Wang06,DerezinskiSki09} and references therein). For a similar result in a non-selfadjoint setting we also refer to \cite{KhenissiRo17}, and in a more general geometrical setting we mention \cite{GuillarmouHas08,GuillarmouHas09,GuillarmouHasSik13} and \cite{BoucletRoy15}.

The optimal estimates for these powers have finally been proved in the recent paper \cite{BoucletBur21}. More precisely, it is proved that the estimates for the resolvent of the Schr\"odinger operator $P$ are the same as for the free Laplacian in \eqref{eq:res-P0}.\\

In this paper we go beyond this optimal estimate and give the asymptotic profile of $(P-\z)\inv$ at the limit $\z \to 0$, in the sense that the difference between the resolvent and the profile is smaller than the resolvent or the profile themselves. 

Such asymptotic expansions of the resolvent at the low frequency limit have already been studied for a Schr\"odinger operator with potential. We refer for instance to \cite{jensenk79}. We also mention the more recent papers \cite{Wang20} and \cite{Aafarani21} for complex-valued potentials. The difficulty in these cases is that one might have an eigenvalue or a resonance at the bottom of the spectrum, which gives a singularity for the resolvent. This is why these results require much stronger decay assumption on the potential.\\

We already know that the size of the powers of the resolvent for the Schr\"odinger operator  is the same as for the free Laplacian $P_0 = -\D$. We prove that, at the first order, they are actually given by the powers of this model operator modified by the factor $w$. More precisely, our main result is the following. 

\begin{theorem} \label{th:lowfreq-Schrodinger}
Let $\rho_1 \in [0,\rho_0[$, $n \in \N^*$ and $\d > n + \frac 12$. There exists $C > 0$ such that for $\z \in \mathbb D \setminus \R_+$ we have 
\[
\nr{\pppg{x}^{-\d} \big( (P-\z)^{-n} - (P_0-\z)^{-n} w \big) \pppg{x}^{-\d}}_{\Lc(L^2)} \leq C \abs \z^{\min (0,\frac {d + \rho_1} 2 - n)}.
\]
\end{theorem}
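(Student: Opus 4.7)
Comparing $w(P-\z) = -\divg(G\nabla) - \z w$ with $P_0 - \z = -\Delta - \z$ yields at once
\[
w(P-\z) - (P_0-\z) = \divg\bigl((\Id - G)\nabla\bigr) + \z(1-w) \;=:\; Q_\z,
\]
a second-order differential operator, affine in $\z$, whose coefficients and all their derivatives decay like $\pppg x^{-\rho_0}$ by~\eqref{hyp-swa}. Composing with $(P_0-\z)^{-1}$ on the left and $(P-\z)^{-1}$ on the right gives the basic resolvent identity
\[
(P-\z)^{-1} - (P_0-\z)^{-1} w = -(P_0-\z)^{-1}\, Q_\z\, (P-\z)^{-1}.
\]
For $n \geq 2$ I would differentiate this identity $n-1$ times in $\z$; since $\partial_\z Q_\z = 1-w$ and $\partial_\z^k Q_\z = 0$ for $k \geq 2$, the Leibniz rule writes $(P-\z)^{-n} - (P_0-\z)^{-n}w$ as a finite linear combination of terms
\[
(P_0-\z)^{-a}\,\tilde Q\,(P-\z)^{-b},\qquad \tilde Q \in \{Q_\z,\, 1-w\},\quad a,b \geq 1,\quad a + b \in \{n, n+1\}.
\]
The theorem thus reduces to bounding each such operator, sandwiched by $\pppg x^{-\d}$ on both sides, by $\abs \z^{\min(0, (d+\rho_1)/2 - n)}$.

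The central obstacle is that a naive splitting of the weight across the three factors is insufficient: the optimal bounds of~\cite{BoucletBur21} require intermediate weights with exponents $> a - \tfrac12$ around $(P_0-\z)^{-a}$ and $> b - \tfrac12$ around $(P-\z)^{-b}$, hence an intermediate total $> a + b - 1 \geq n - 1$, while the coefficients of $\tilde Q$ provide at most $\rho_0 \leq 1$ of spatial decay. To bypass this, my plan is to decompose $\{\abs \z \leq 1\}$ dyadically in $\abs\z$ and, on each annulus $\abs\z \simeq \e^2$, rescale $x = y/\e$. Then $P$ becomes an $\e$-semiclassical perturbation of $P_0$ at energy $\simeq 1$, the coefficient decay $\pppg x^{-\rho_0}$ of $\tilde Q$ turns into a semiclassical prefactor $\e^{\rho_0}$, and the weighted low-frequency estimates translate into uniform semiclassical resolvent bounds. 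Multiplying these by the gain $\e^{\rho_0}$ yields a contribution $\e^{d + \rho_0 - 2n} = \abs\z^{(d+\rho_0)/2 - n}$ per dyadic block, and summing with an arbitrarily small loss $\e^{\rho_0 - \rho_1}$ (which also absorbs the logarithmic exception at $n = d/2$) gives the theorem.

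The hardest step is to establish these semiclassical weighted resolvent bounds uniformly in $\e$ for both $P$ and $P_0$. This is precisely the commutator method for generalized resolvents announced in the abstract, applied simultaneously to the two operators: one would construct a single conjugate operator $A$ such that both $i[P, A]$ and $i[P_0, A]$ are positive on the semiclassical spectral window around $1$, and run a Mourre-type argument on the pair $(P, P_0)$ to gain joint weighted control on $(P-\z)^{-b}$ and $(P_0-\z)^{-a}$ strong enough to estimate each term of the Leibniz expansion.
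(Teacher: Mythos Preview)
Your reduction via the resolvent identity and Leibniz rule is essentially the paper's Lemma~2.1 (with the roles of $P$ and $P_0$ swapped, which is harmless), and your identification of the weight deficit as the central obstacle is exactly right. The rescaling heuristic is also the correct intuition: the paper implements it not by a dyadic decomposition in $|\z|$ but by working throughout in the rescaled Sobolev spaces $H_r^s$ with $r=|\z|^{1/2}$, where the decay of $\tilde Q$ translates (Proposition~\ref{prop:dec-sob}) into a gain of $|z|^{\rho}$ in $\Lc(H_z^s,H_z^{s-\rho})$, and the external weights $\pppg x^{-\d}$ give $|z|^{s}$ each via Lemma~\ref{lem:weight}; this is the precise version of your ``$\e^{\rho_0}$'' and ``$\e^d$'' bookkeeping. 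Your remark about ``summing over dyadic blocks with a loss $\e^{\rho_0-\rho_1}$'' is confused: the estimate is pointwise in $\z$, so no sum is needed; the loss $\rho_0-\rho_1$ arises instead from the strict inequality $\eta>0$ required in Proposition~\ref{prop:dec-sob}.

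Where your sketch stops is where most of the paper's work lies. The ``uniform semiclassical resolvent bounds'' you invoke are not available directly in the $\pppg x^{-\d}$ weights: the Mourre machinery (Theorem~\ref{th:mourre-abstract}) produces estimates in $\pppg{A_z}^{-\d}$ weights, where $A_z$ is the generator of dilations localized at scale $|z|^{-1}$. Converting between these and $\pppg x^{-\d}$ requires an additional layer of resolvent identities inserting factors $R(ir)$ at the purely imaginary point $ir=i|z|$, whose elliptic regularity in $H_r^s$ (Propositions~\ref{prop:regularity-Rz} and~\ref{prop:regularity-Rz-A}) absorbs the derivatives in $A_z$. A further subtlety you have not touched is that the elliptic regularity of $R(ir)$ in $H_r^s$ for $|s|$ close to $d/2$ only holds when $G-\Id$ is small in $\Sc^{-\bar\rho}$; the paper handles this by splitting $G=G_0+G_\infty$ with $G_0$ compactly supported (Lemma~\ref{lem:chi-0}) and treating the compact piece by a separate resolvent identity. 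Finally, the Mourre argument itself must be run not on a single resolvent but on products $R^{[n_1]}(z)\theta_\sigma(z)R_0^{[n_2]}(z)$ mixing the two operators; this is the content of Lemma~\ref{lem:multi-Mourre} and the surrounding machinery in Section~\ref{sec:Mourre}, and is substantially more delicate than a standard Mourre estimate.
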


This proves that for $\z$ close to 0 the difference between $(P-\z)^{-n}$ and $(P_0-\z)^{-n} w$ is smaller that $(P_0-\z)^{-n} w$ (see \eqref{eq:res-P0}). We deduce in particular that $(P-\z)^{-n}$ behaves in weighted spaces exactly as $(P_0-\z)^{-n} w$ at the low frequency limit. 
As a corollary, we recover the optimal estimate for the resolvent as given in \cite{BoucletBur21}. 

\begin{corollary}
Let $n \in \N^*$ and $\d > n + \frac 12$. There exists $C > 0$ such that for $\z \in \DD \setminus \R_+$ we have 
\begin{equation}\label{eq:BB}
\nr{\pppg x^{-\d} (P - \z)^{-n} \pppg {x}^{-\d}}_{\Lc(L^2)} \leq C \begin{cases} \abs{\z}^{\min(0,\frac d 2 -n)} & \text{if } n \neq \frac d 2,\\ \abs{\log (\z)} & \text{if } n = \frac d 2. \end{cases}
\end{equation}
\end{corollary}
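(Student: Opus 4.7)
The plan is to deduce the corollary directly from Theorem~\ref{th:lowfreq-Schrodinger} together with the free-Laplacian estimate \eqref{eq:res-P0}. Writing
\[
(P-\z)^{-n} = (P_0-\z)^{-n} w + \bigl[(P-\z)^{-n} - (P_0-\z)^{-n} w\bigr],
\]
the triangle inequality reduces matters to controlling each piece in $\Lc(L^2)$ after sandwiching with $\pppg x^{-\d}$.

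First, I would upgrade \eqref{eq:res-P0} from the compactly-supported form with $\1_{B(R)}$ to the weighted form: for $\d > n+\tfrac12$,
\[
\nr{\pppg x^{-\d}(P_0-\z)^{-n}\pppg x^{-\d}}_{\Lc(L^2)} \lesssim \begin{cases}\abs\z^{\min(0,\frac d2-n)} & \text{if } n\neq \frac d2,\\ \abs{\log\z} & \text{if } n=\frac d2.\end{cases}
\]
This is a standard fact for the flat Laplacian; the most elementary route is to decompose $\pppg x^{-\d}$ dyadically over annuli $\{2^k\leq \pppg x\leq 2^{k+1}\}$ and exploit the scaling $(-\D-\z)^{-n}$ together with the explicit kernel (or the classical Agmon-type bounds) on each shell. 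The condition $\d>n+\tfrac12$ ensures summability in $k$ of the resulting geometric series, while the $\z$-dependence on each shell is inherited from the localised estimate \eqref{eq:res-P0}. Since $w$ is a bounded multiplication operator commuting with $\pppg x^{-\d}$, the same bound holds for $\pppg x^{-\d}(P_0-\z)^{-n} w\pppg x^{-\d}$ up to a factor $\nrs{w}_\infty$.

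Second, I would invoke Theorem~\ref{th:lowfreq-Schrodinger} with any fixed choice $\rho_1 \in [0,\rho_0[$ to obtain
\[
\nr{\pppg x^{-\d}\bigl((P-\z)^{-n}-(P_0-\z)^{-n}w\bigr)\pppg x^{-\d}}_{\Lc(L^2)} \leq C\abs\z^{\min(0,\frac{d+\rho_1}{2}-n)},
\]
and compare exponents: when $n<\frac d2$ both terms are $O(1)$; when $n=\frac d2$ the difference bound is $O(\abs\z^{\rho_1/2})=O(1)$, negligible against $\abs{\log\z}$; when $n>\frac d2$ one has $\frac{d+\rho_1}{2}-n>\frac d2-n$, so the difference is $o(\abs\z^{\frac d2-n})$ as $\z\to 0$. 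Summing the two contributions yields \eqref{eq:BB}.

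The only non-trivial step is the upgrade of \eqref{eq:res-P0} to weighted form, but this is a classical flat-Laplacian computation independent of the perturbed operator~$P$; everything else is bookkeeping of exponents. I do not expect a genuine obstacle here, which is why the result is formulated as a corollary rather than a theorem.
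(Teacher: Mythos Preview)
Your proposal is correct and matches the argument the paper implicitly intends: the corollary is stated immediately after Theorem~\ref{th:lowfreq-Schrodinger} without a separate proof, precisely because the triangle-inequality decomposition you describe, combined with the free estimate \eqref{eq:res-P0} and the exponent comparison, is the obvious route. Your case analysis of the exponents ($n<\tfrac d2$, $n=\tfrac d2$, $n>\tfrac d2$) is accurate, and you correctly identify the passage from the cut-off form $\1_{B(R)}$ in \eqref{eq:res-P0} to the weighted form $\pppg x^{-\d}$ as the only step requiring an independent (but standard) argument.
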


As usual for this kind of resolvent estimates, the proof will rely in particular on the Mourre commutators method. To prove our result we show that this method can be applied with much more flexibility than usual. 

We have to apply the result simultaneously for $P$ and $P_0$. One of the difficulty is that $P$ is selfadjoint the weighted space $L^2_w = L^2(w\,dx)$ while $P_0$ is selfadjoint on $L^2$. Thus, unless $w = 1$, the operators $P$ and $P_0$ are not selfadjoint on the same Hilbert space. 

For this reason, we do not estimate the resolvent of $P$ in $L^2_w$ but stay in the usual $L^2$ space. Then $P$ is no longer selfadjoint, but we can rewrite its resolvent as  
\begin{equation} \label{eq:res-L-DGw}
(P-\z)\inv = (-\D_G - \z w)\inv w.
\end{equation}
Now the difficulty is that $(-\D_G - \z w)\inv$ is not a resolvent in the usual sense, and in particular its derivatives are no longer given by its powers. We will see that it is not necessary to apply the Mourre method to a resolvent. We will just see ${(-\D_G - \z w)\inv}$ as the inverse of a parameter-dependant dissipative operator. In particular, even if we discuss a selfadjoint operator, our proof never really uses this selfadjointness and our method is robust with respect to non-selfadjoint (dissipative) perturbations. This is important in the perspective to apply the same method to different models.

Finally, we do not apply the Mourre method to a power of the resolvent of some operator, but to the product of some different parameter-dependant operators. Some of the factors will be of the form $(-\D_G - \z w)\inv$ as discussed above, there will be resolvents of $P_0$, but we will also have the factor $w$ which appears in \eqref{eq:res-L-DGw} and factors comming from the difference $(-\D_G - \z w) - (-\D - \z)$. 

The smallness at infinity of the corresponding coefficients given by \eqref{hyp-swa} will play a crucial role in the proof of Theorem \ref{th:lowfreq-Schrodinger}. In particular, it is usual to use decaying weights on both sides of the resolvent, but here we will also have to use the weights which appear between the resolvents.\\ 

Note that replacing $(P-\z)\inv$ by $(-\D_G - \z w)\inv w$ is not just a technical issue. It is really $(-\D_G-\z w)\inv$ that we can compare with $(-\D - \z)\inv$, and \eqref{eq:res-L-DGw} explains the additional factor $w$ in the estimates of Theorem \ref{th:lowfreq-Schrodinger}.\\

Now we discuss one of the important applications of the resolvent estimates, namely the analysis of the large time behavior for the time dependent problem \eqref{Schrodinger}.\\

After Theorem \ref{th:lowfreq-Schrodinger}, it is expected that for large times the solution of \eqref{Schrodinger} should behave in weighted spaces like a solution of the free Schr\"odinger equation, with a different initial condition.\\

The model problem is
\begin{equation} \label{Schrodinger-free}
\begin{cases}
-i \partial_t u_0 - \D u_0 = 0, & \quad \text{on } \R_+ \times \R^d,\\
u_0|_{t = 0} = f_0, & \quad \text{on } \R^d,
\end{cases}
\end{equation}
where $f_0 \in L^2$. The $L^2$-norm of the solution $u_0(t)$ is constant but, given $R > 0$, there exists a constant $C > 0$ such that if $f_0$ is compactly supported in the ball $B(R)$ then the energy of the solution $u_0$ of the free Schr\"odinger equation satisfies
\[
\forall t \geq 0, \quad \nr{\1_{B(R)} u_0(t)}_{L^2} \leq C \pppg t^{-\frac d 2} \nr{f_0}_{L^2}.
\]
Moreover this estimate is optimal (see \cite{BoucletBur21}). The local energy decay has been proved for various perturbations of this model case, see for instance \cite{rauch78,tsutsumi84}.
For a long range perturbation of the metric and under the non-trapping condition, local energy decay has been proved in \cite{Bouclet11,BonyHaf12} with a loss of size $O(t^\e)$. 
The optimal decay at rate $O(t^{-\frac d 2})$ has then been proved in \cite{BoucletBur21}.\\

Again, our purpose is to go further and to give the large time asymptotic profile for the solution $u$ of \eqref{Schrodinger}. Since the contribution of high frequencies decays very fast under the non-trapping condition, the large time behavior of $u$ depends on the contribution of low frequencies. Then, with Theorem \ref{th:lowfreq-Schrodinger} we will see that for large times the solution $u$ looks like a solution of the free Schr\"odinger equation \eqref{Schrodinger-free}:

\begin{theorem} \label{th:locdec-schrodinger}
Assume that the non-trapping condition \eqref{hyp-non-trapping} holds. Let $\rho_1 \in [0,\rho_0[$ and $\d \geq \frac {d}2 + 2$. There exists $C \geq 0$ such that for $t \geq 0$ we have 
\[
\nr{\pppg{x}^{\d} \big(e^{-itP} - e^{-itP_0}w \big) \pppg{x}^{-\d}}_{\Lc(L^{2})} \leq C \pppg t^{-\frac d 2 - \frac {\rho_1} 2}.
\]
\end{theorem}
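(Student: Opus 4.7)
The plan is to exploit the spectral representation of both propagators and to treat low and high frequencies separately. Stone's formula applied to $P$ and $P_0$ gives
\[
e^{-itP} - e^{-itP_0}w = \frac{1}{2\pi i}\int_0^\infty e^{-it\lambda}\bigl(A(\lambda-i0) - A(\lambda+i0)\bigr)d\lambda,
\]
where $A(z) := (P-z)\inv - (P_0-z)\inv w$, the identity being read between the weighted spaces $\pppg x^{-\d} L^2$ and $\pppg x^\d L^2$. Fix $\chi \in C_c^\infty(\R)$ with $\chi = 1$ near $0$ and $\supp\chi \subset [-1,1]$, and split the integral into a low-frequency piece $J_L(t)$ (carrying the factor $\chi$) and a high-frequency piece $J_H(t)$ (carrying $1-\chi$). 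Both $P$ (under \eqref{hyp-non-trapping}) and $P_0$ are non-trapping, so \eqref{estim-P-high-freq} yields uniform resolvent bounds for all powers on $\supp(1-\chi)$; iterated integration by parts in $\lambda$ then gives $\nr{\pppg x^\d J_H(t)\pppg x^{-\d}}_{\Lc(L^2)} \lesssim t^{-N}$ for every $N$, well beyond the claimed rate.

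For the low-frequency piece $J_L(t)$, I perform a dyadic decomposition in $\lambda$: pick $\psi \in C_c^\infty((0,\infty))$ so that $\sum_{j \leq 0}\psi_j = 1$ on $\supp\chi \cap (0,1]$ with $\psi_j = \psi(2^{-j}\cdot)$, and let $I_j(t)$ denote the corresponding contribution. Setting $\mu := (d+\rho_1)/2$, the identity $\partial_\lambda^{n-1}(P-\lambda)\inv = (n-1)!\,(P-\lambda)^{-n}$ and Theorem~\ref{th:lowfreq-Schrodinger} give, for every $n \geq 1$ and $\d > n + 1/2$,
\[
\nr{\pppg x^{-\d} \partial_\lambda^{n-1} A(\lambda\pm i0)\pppg x^{-\d}}_{\Lc(L^2)} \lesssim |\lambda|^{\min(0,\mu - n)}.
\]
The hypothesis $\d \geq d/2 + 2$ permits taking $n$ up to an integer $K > \mu$ (allowed because $\rho_1 < 1$). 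At scales $2^j \gtrsim 1/t$ one integrates by parts $K$ times in $\lambda$, with no boundary contribution since $\psi_j$ is compactly supported in $(0,\infty)$; the Leibniz rule and the derivative bounds above yield $\nr{I_j(t)} \lesssim t^{-K} 2^{j(\mu-K)}$, and summation of the geometric series over $1/t \lesssim 2^j \leq 1$ produces the target $t^{-\mu}$.

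The main obstacle lies at the scales $2^j \lesssim 1/t$. Applying Theorem~\ref{th:lowfreq-Schrodinger} with $n=1$ only gives the pointwise bound $\nr{A(\lambda\pm i0)} = O(1)$ near $\lambda = 0$ and hence $\nr{I_j(t)}\lesssim 2^j$, which sums to the insufficient $1/t$ as soon as $d+\rho_1 > 2$. To recover the correct decay one must show that the jump $A(\lambda-i0) - A(\lambda+i0)$ itself is of order $|\lambda|^{\mu-1}$ near $0$. The natural strategy is to subtract the explicit model contribution: the jump of $(P_0-\lambda)\inv$ (the free spectral density) is classically of order $\lambda^{d/2-1}$ in weighted operator norm, and the additional $\lambda^{\rho_1/2}$ improvement for the difference $A$ is furnished by the higher-order derivative bounds of Theorem~\ref{th:lowfreq-Schrodinger}, which control the remainder once the explicit model part is isolated. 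Combined, this yields $\nr{I_j(t)}\lesssim 2^{j\mu}$ for $2^j \lesssim 1/t$, whose sum is $t^{-\mu} = t^{-d/2-\rho_1/2}$, and together with the high-frequency bound above completes the proof.
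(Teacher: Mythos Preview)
Your high-frequency argument and the treatment of dyadic scales $2^j\gtrsim 1/t$ via $K$ integrations by parts are correct and parallel the paper. The genuine gap is at the small scales $2^j\lesssim 1/t$. You correctly note that the naive bound $\|A(\lambda\pm i0)\|=O(1)$ only sums to $t^{-1}$, but your proposed remedy---showing that the spectral jump $A(\lambda-i0)-A(\lambda+i0)$ is itself $O(\lambda^{\mu-1})$---is neither proved nor obviously deducible from Theorem~\ref{th:lowfreq-Schrodinger}. That theorem bounds $A$ and its derivatives on each side of the cut separately; it says nothing about cancellation in the difference, and the sentence ``the additional $\lambda^{\rho_1/2}$ improvement \dots is furnished by the higher-order derivative bounds'' is hand-waving, not an argument.

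The fix is in fact much simpler and is exactly what the paper does: at the small scales you should also integrate by parts, but only $m-1$ times where $m$ is the integer in $(\mu,\mu+1)$. Then Theorem~\ref{th:lowfreq-Schrodinger} with $n=m$ gives $\|\partial_\lambda^{m-1}A\|\lesssim\lambda^{\mu-m}$, and since $\mu-m>-1$ this singularity is \emph{integrable}: the contribution of $I_j$ becomes $t^{-(m-1)}2^{j(\mu-m+1)}$, which sums over $2^j\lesssim 1/t$ to $t^{-(m-1)}\cdot t^{m-1-\mu}=t^{-\mu}$. No information about the spectral jump is needed. The paper implements this in a slightly different but equivalent form: it works with the single contour $\zeta=\tau+i\mu$ (so no Stone formula and no jump appears at all), does $m-1$ integrations by parts globally, estimates the region $|\tau|\le 1/t$ directly using integrability of $|\tau|^{\mu-m}$, and does one further integration by parts on $|\tau|\ge 1/t$. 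The constraint $\delta\ge d/2+2$ is precisely what allows $n=m$ and $n=m+1$ in Theorem~\ref{th:lowfreq-Schrodinger}.
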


This statement says that for $t$ large the solution $u$ of \eqref{Schrodinger} is close in weighted spaces to the solution of \eqref{Schrodinger-free} with $f_0 = wf$. In particular, since we know that $e^{-itP_0} w$ decays like $t^{-\frac d 2}$ in $\Lc(L^{2,\d},L^{2,-\d})$, we recover the optimal local energy decay for $u$.

\begin{corollary}
Assume that the non-trapping condition \eqref{hyp-non-trapping} holds. Let $\d \geq \frac {d}2 + 2$. There exists $C \geq 0$ such that for $t \geq 0$ we have 
\[
\nr{\pppg{x}^{-\d} e^{-itP} \pppg{x}^{-\d}}_{\Lc(L^{2})} \leq C \pppg t^{-\frac d 2}.
\]
\end{corollary}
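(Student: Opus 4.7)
The plan is to decompose $e^{-itP} = e^{-itP_0}w + (e^{-itP} - e^{-itP_0}w)$ and control the two pieces separately in the weighted operator norm $\|\pppg{x}^{-\d}\,\cdot\,\pppg{x}^{-\d}\|_{\Lc(L^2)}$. The remainder $e^{-itP} - e^{-itP_0}w$ is handled directly by Theorem \ref{th:locdec-schrodinger}: choosing $\rho_1 = 0 \in [0,\rho_0[$ is enough and yields the decay rate $\pppg t^{-d/2}$, which matches the target. Since $\d \ge d/2 + 2 \ge 0$, we have $\pppg x^{-\d} \le 1 \le \pppg x^{\d}$ pointwise, so
\[
\nr{\pppg x^{-\d} M \pppg x^{-\d}}_{\Lc(L^2)} \le \nr{\pppg x^{\d} M \pppg x^{-\d}}_{\Lc(L^2)}
\]
for any bounded operator $M$, and in particular the norm required in the corollary is majorized by the norm controlled in Theorem \ref{th:locdec-schrodinger}.

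For the main term $e^{-itP_0}w$, the assumption $w \in L^\infty$ (with $C^{-1} \le w \le C$) makes multiplication by $w$ bounded on $L^2$ and commuting with $\pppg x^{-\d}$, so matters reduce to the classical local energy decay for the free Schr\"odinger group,
\[
\nr{\pppg x^{-\d} e^{-itP_0}\pppg x^{-\d}}_{\Lc(L^2)} \le C \pppg t^{-d/2},
\]
which holds as soon as $\d > d/2$ and in particular for $\d \ge d/2 + 2$. This fact is standard: for $|t|$ large, one combines the explicit Gaussian kernel of $e^{-itP_0}$, which gives $\|e^{-itP_0}\|_{L^1 \to L^\infty} \le C |t|^{-d/2}$, with the Cauchy–Schwarz embedding $\pppg x^{-\d}\colon L^2 \hookrightarrow L^1$ (valid because $2\d > d$) and the dual embedding $L^\infty \hookrightarrow \pppg x^{\d} L^2$. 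For $|t|$ bounded one uses the $L^2$-unitarity of $e^{-itP_0}$ to absorb the factor into the constant. Summing the two contributions yields the stated bound $\pppg t^{-d/2}$.

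There is no real obstacle specific to this corollary: once Theorem \ref{th:locdec-schrodinger} and the free decay are in hand, the proof is little more than a triangle inequality. All the work has already been invested upstream, first in the low-frequency resolvent expansion of Theorem \ref{th:lowfreq-Schrodinger}, and then in combining it with the high-frequency estimate \eqref{estim-P-high-freq} (which requires the non-trapping hypothesis \eqref{hyp-non-trapping}) to deduce Theorem \ref{th:locdec-schrodinger}.
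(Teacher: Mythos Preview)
Your proof is correct and follows exactly the approach the paper indicates in the sentence preceding the corollary: use the triangle inequality to split $e^{-itP}$ into $e^{-itP_0}w$ plus the difference, bound the difference by Theorem \ref{th:locdec-schrodinger} (your observation that $\pppg x^{-\d}\le\pppg x^{\d}$ lets you pass from the weight pattern of that theorem to the one needed here), and bound the free term by the standard dispersive estimate for $e^{-itP_0}$ combined with $\pppg x^{-\d}\in L^2$. The paper does not spell out a formal proof, but your write-up is precisely the argument it has in mind.
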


\subsection*{Organization of the paper} After this introduction, we give in Section \ref{sec:strategy} the main arguments for the proofs of Theorem \ref{th:lowfreq-Schrodinger}. The proofs of the intermediate results are then given in the following three sections. In particular we improve and apply the commutators method in Section \ref{sec:Mourre}. Finally we prove Theorem \ref{th:locdec-schrodinger} in Section \ref{sec:loc-decay}.

\section{Strategy for low frequency asymptotics} \label{sec:strategy}

In this section we explain how Theorem \ref{th:lowfreq-Schrodinger} is proved. We only give the main steps, and the details will be postponed to the following three sections.

\subsection{Difference of the resolvents}

We recall that the operator $P$ was defined on $L^2$ by \eqref{def-L}, with domain $H^2$. This is a non-negative and selfadjoint operator on $L^2_w$, and its resolvent $(P-\z)\inv$ is well defined for any $\z \in \C \setminus \R_+$ with norm $\mathsf{dist}(\z,\R_+)\inv$ in $\Lc(L^2_w)$.\\

For $z \in \DD_+$ we set $P(z) = -\D_G - z^2 w$ and
\begin{equation*}  
R(z) = (P-z^2)\inv w\inv = (-\D_G - z^2 w)\inv.
\end{equation*}
In order to have consistent notation, we also set 
\begin{equation*}
P_0(z) = -\D- z^2 \quad \text{and} \quad R_0(z) = (-\D - z^2)\inv.
\end{equation*}
For $n \in \N^*$ and $z \in \DD_+$ we set 
\begin{equation} \label{def-R[n]}
R^{[n]}(z) 
= \abs{z}^{2n} (P-z^2)^{-n} w\inv
= \abs{z}^{2n} \big( R(z) w \big)^{n-1} R(z)
\end{equation}
and
\[
R_0^{[n]}(z) = \abs{z}^{2n} R_0(z)^{n}.
\]
Since $w$ defines a bounded operator on the weighted space $L^{2,\d} = L^2 (\pppg {x}^{2\d} dx)$, the estimate of Theorem \ref{th:lowfreq-Schrodinger} is equivalent, for a possibly different constant $C > 0$, to
\begin{equation} \label{eq:lowfreq-Schro}
\nr{\pppg x^{-\d} \big(R^{[n]}(z) - R_0^{[n]}(z) \big) \pppg x^{-\d}}_{\Lc(L^{2})} \leq C \abs{z}^{\min(d+\rho_1,2n)} .
\end{equation}

It is usual in this kind of context to estimate powers (in particular products) of resolvents. The first step is to rewrite the difference $R^{[m]}(z) - R_0^{[m]}(z)$ as a sum of products of factors $R(z)$ and $R_0(z)$.

\begin{lemma} \label{lem:diff-resolvent}
For $n \in \N^*$ and $z \in \DD_+$ we have 
\begin{equation*}
\begin{aligned}
R^{[n]}(z) - R_0^{[n]}(z)
& = \sum_{k=1}^{n-1} R^{[n-k]}(z) (w-1) R_0^{[k]}(z)\\
& - \sum_{k=1}^n R^{[n-k+1]}(z) \frac {P(z) - P_0(z)}{\abs z^2} R_0^{[k]}(z).
\end{aligned}
\end{equation*}
\end{lemma}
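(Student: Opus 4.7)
My plan is to prove this algebraic identity by a direct telescoping argument, viewing each resolvent iterate as an ordered product of $n$ elementary factors. By definition,
\[
R^{[n]}(z) = \abs{z}^{2n} \underbrace{R(z)\,w\,R(z)\,w\,\cdots\,w\,R(z)}_{n \text{ factors } R(z),\ n-1 \text{ factors } w}
\quad\text{and}\quad R_0^{[n]}(z) = \abs{z}^{2n} R_0(z)^n,
\]
so both are ordered products of $n$ elementary blocks. Setting $M_j = R(z)w$ for $j<n$, $M_n = R(z)$ and $N_j = R_0(z)$ for every $j$, the standard telescoping identity
\[
M_1 \cdots M_n - N_1 \cdots N_n = \sum_{j=1}^n M_1 \cdots M_{j-1}(M_j - N_j) N_{j+1} \cdots N_n
\]
produces $n$ contributions to $R^{[n]}(z) - R_0^{[n]}(z)$, each localized at a single position $j$, with the left prefix $M_1 \cdots M_{j-1}$ matching (up to a power of $\abs z$) the beginning of $R^{[j]}(z)$ and the right suffix $N_{j+1}\cdots N_n$ matching $R_0^{[n-j]}(z)$.

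The next step is to rewrite each $M_j - N_j$ through the two elementary identities
\[
R(z)w - R_0(z) = R(z)(w-1) + \bigl(R(z) - R_0(z)\bigr), \qquad R(z) - R_0(z) = -R(z)\bigl(P(z) - P_0(z)\bigr) R_0(z),
\]
the second one being the usual resolvent identity applied to the pair $(P(z),P_0(z))$. For $j<n$ both pieces contribute, while for $j=n$ only the second appears since there is no trailing $w$. A direct computation shows that the $R(z)(w-1)$ contribution at position $j$ yields $R^{[j]}(z)(w-1)\,R_0^{[n-j]}(z)$ (for $1 \le j \le n-1$), and the $R(z)-R_0(z)$ contribution yields $-R^{[j]}(z)\,\tfrac{P(z)-P_0(z)}{\abs z^2}\,R_0^{[n-j+1]}(z)$ (for $1 \le j \le n$). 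Re-indexing $k = n-j$ in the first sum and $k = n-j+1$ in the second recovers the stated identity.

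The only obstacle is careful bookkeeping of the $\abs z^{2n}$ prefactor: for each insertion of $(w-1)$ or $(P(z)-P_0(z))/\abs z^2$, the total power of $\abs z$ must be split correctly between the two flanking halves so as to reconstitute brackets $R^{[\cdot]}(z)$ and $R_0^{[\cdot]}(z)$. This is routine but error-prone; as a sanity check the $n=1$ case reduces to an empty first sum and a single second term equal to $-\abs z^2 R(z)(P(z)-P_0(z)) R_0(z)$, which is precisely the resolvent identity and fixes the sign and the $\abs z^2$ normalization in the general formula.
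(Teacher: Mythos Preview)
Your proof is correct and uses essentially the same ingredients as the paper: the resolvent identity $R(z)-R_0(z)=-R(z)(P(z)-P_0(z))R_0(z)$ and the splitting $R(z)w-R_0(z)=R(z)(w-1)+(R(z)-R_0(z))$. The only difference is organizational: the paper proceeds by induction on $n$ via the relation
\[
R^{[n+1]}(z)-R_0^{[n+1]}(z)=\abs z^2 R(z)w\bigl(R^{[n]}(z)-R_0^{[n]}(z)\bigr)+\abs z^2\bigl(R(z)w-R_0(z)\bigr)R_0^{[n]}(z),
\]
whereas you unfold this recursion directly as a telescoping sum over the $n$ positions. The two arguments are equivalent; your bookkeeping of the $\abs z$ powers and the re-indexing are correct.
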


\begin{proof}
By the resolvent identity we have 
\[
R(z) - R_0(z) =  - R(z) \big( P(z) - P_0(z) \big) R_0(z)
\]
(this gives the case $n=1$), and hence 
\[
R(z) w - R_0(z) = R(z) (w-1) - R(z) \big( P(z) - P_0(z) \big) R_0(z).
\]
Since for $n \in \N^*$ we have 
\begin{align*}
R^{[n+1]}(z) - R_0^{[n+1]}(z) 
& = \abs z^2 R(z) w \big( R^{[n]}(z) - R_0^{[n]}(z) \big)\\
& + \abs z^2 \big( R(z) w - R_0(z) \big) R_0^{[n]}(z),
\end{align*}
the lemma follows by induction.
\end{proof}

For $z \in \DD_+$ we set 
\begin{equation} \label{def-th}
\th_0(z) = w-1, \quad \th_1(z) = \frac {P(z)-P_0(z)}{\abs z^2}
\end{equation}
(of course $\th_0(z)$ does not depend on $z$, but it will be convenient to have analogous notation for these two operators).
Then, by Lemma \ref{lem:diff-resolvent}, we have to estimate operators of the form 
\begin{equation} \label{eq:RzRo-terme1}
R^{[n-k+\s]}(z) \th_\s(z) R_0^{[k]}(z), \quad \s \in \{0,1\}, \quad 1 \leq k \leq n-1+\s.
\end{equation}
These operators are now products of resolvents of the form $R(z)$ or $R_0(z)$, with inserted factors $w$, $\th_0(z)$ or $\th_1(z)$. The additional smallness in \eqref{eq:lowfreq-Schro} compared to the estimates of $R^{[m]}(z)$ or $R_0^{[m]}(z)$ alone will come from the smallness (in a suitable sense) of the factors $\th_0(z)$ and $\th_1(z)$.\\

The estimate \eqref{eq:lowfreq-Schro} and hence Theorem \ref{th:lowfreq-Schrodinger} are then consequences of the following result.

\begin{proposition} \label{prop:lowfreq-Schro}
Let $\rho_1 \in [0,\rho_0[$. Let $n_1,n_2 \in \N^*$, $\sigma \in \{0,1\}$ and $\d > n_1 + n_2 - \s + \frac 12$. Then there exists $C > 0$ such that for $z \in \DD_+$ we have 
\begin{equation} \label{estim-RR_0}
\nr{\pppg x^{-\d} R^{[n_1]}(z) \th_\sigma(z) R_0^{[n_2]}(z) \pppg x^{-\d}}_{\Lc(L^2)} \leq C \abs{z}^{\min(d+\rho_1,2n_1+ 2n_2-2\s)}.
\end{equation}
\end{proposition}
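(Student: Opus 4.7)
The plan is to estimate the product $R^{[n_1]}(z)\,\theta_\sigma(z)\,R_0^{[n_2]}(z)$ in weighted $L^2$ by splitting $\DD_+$ into two regimes, handled by different techniques.

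First, in the regime where $|z|$ is bounded below, say $|z| \in [c_0, 1]$ for some fixed $c_0 > 0$, I would rely on standard resolvent estimates. Here $R(z)$ and $R_0(z)$ are uniformly bounded between weighted spaces $L^{2,\delta} \to L^{2,-\delta}$ with $\delta > \tfrac12$, by the limiting absorption principle recalled in the introduction, and the quantities $R^{[n_1]}(z)$ and $R_0^{[n_2]}(z)$ are uniformly bounded in weighted $L^2$ when $\delta > n_i - \tfrac12$. The middle factor $\theta_\sigma(z)$ is bounded as well, the $|z|^{-2}$ in $\theta_1$ being harmless when $|z| \geq c_0$. Since $|z| \leq 1$ and the exponent $\min(d + \rho_1,\, 2(n_1 + n_2) - 2\sigma)$ is non-negative, this uniform bound suffices.

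Second, and more delicately, in the low-frequency regime $|z| \to 0$ I would apply the commutator method developed in Section~\ref{sec:Mourre} to the entire product $R^{[n_1]}(z)\theta_\sigma(z)R_0^{[n_2]}(z)$ at once, rather than to each factor separately. The conjugate operator is the generator of dilations, for which both $-\D_G$ and $-\D$ admit positive commutator estimates. The outer weight $\pppg x^{-\delta}$ with $\delta > n_1 + n_2 - \sigma + \tfrac12$ is distributed across the product so that each of the $n_1 + n_2$ resolvent factors is sandwiched between weights totalling slightly more than $1$, while the intermediate positions -- which usually carry no weight -- here benefit from the decay $\pppg x^{-\rho_0}$ built into the coefficients $w - 1$ and $G - \mathrm{Id}$ appearing inside $\theta_\sigma(z)$ and in the $w$'s within $R^{[n_1]}(z)$. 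This is precisely the \emph{weights between the resolvents} emphasised in the introduction. The relief $-\sigma$ in the weight condition reflects the two derivatives already present in $\theta_1(z)$, which can be absorbed by an adjacent resolvent and effectively save one resolvent factor from the counting. The two competing exponents in $\min(d + \rho_1,\, 2(n_1 + n_2) - 2\sigma)$ correspond to two different mechanisms: the power $2(n_1 + n_2) - 2\sigma$ comes from the explicit prefactor $|z|^{2(n_1 + n_2)}$ in $R^{[n_1]}R_0^{[n_2]}$ combined with the $|z|^{-2\sigma}$ hidden in $\theta_\sigma(z)$, while the power $d + \rho_1$ reflects the saturated low-frequency behaviour of the free resolvent (the $d$ matches $\min(2n, d)$ in \eqref{eq:res-P0} for large $n$), with the extra $\rho_1$ gain extracted by interpolating against a bound that uses the full coefficient decay $\pppg x^{-\rho_0}$ of $\theta_\sigma(z)$, permitted for any $\rho_1 < \rho_0$.

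The hard part will be making the commutator method work in this non-standard setting. The operator $R(z) = (-\D_G - z^2 w)^{-1}$ is not a resolvent in the classical sense, since its $z$-derivatives are not its powers; we interleave inverses of two genuinely different operators $P(z)$ and $P_0(z)$; and neither is self-adjoint on the same Hilbert space as the other, since $P$ is self-adjoint on $L^2_w$ rather than $L^2$. The content of Section~\ref{sec:Mourre} is therefore to develop a version of Mourre's method that (i) applies to inverses of parameter-dependent \emph{dissipative} operators, (ii) handles products of such inverses for different underlying operators, and (iii) exploits the decaying coefficient factors appearing between the resolvent positions. Once these generalized commutator estimates are granted, \eqref{estim-RR_0} will follow from a careful accounting of the $|z|$-powers and of the distribution of the outer weights $\pppg x^{-\delta}$ across the product.
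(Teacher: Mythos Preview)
Your proposal has a genuine gap: the commutator method alone, applied to the whole product, cannot produce the exponent $\min(d+\rho_1, 2(n_1+n_2)-2\sigma)$. Look at what the Mourre machinery actually delivers in this paper (Theorem~\ref{th:mourre-Schro}): a uniform bound of order $|z|^{\rho}$ with $\rho<\rho_0$, in spaces weighted by $\langle A_z\rangle^{-\delta}$, not $\langle x\rangle^{-\delta}$. That is at most a gain of $|z|^{\rho_0}$, nowhere near $|z|^d$. Your heuristic that the $d$ ``reflects the saturated low-frequency behaviour of the free resolvent'' is correct as a slogan, but you have not identified any mechanism by which the commutator argument would extract it.

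The missing idea is the combination of a resolvent identity with elliptic regularity in \emph{rescaled} Sobolev spaces. The paper writes, via iteration of $R(z)=R(ir)+(z^2+r^2)R(ir)wR(z)$ with $r=|z|$, each of $R^{[n_1]}(z)$ and $R_0^{[n_2]}(z)$ as a sum of terms containing many factors $R(ir)$, $R_0(ir)$ and at most one block of $z$-dependent resolvents (see \eqref{eq:Rz-Ri-terme1}--\eqref{eq:Rz-Ri-terme4}). The purely-$ir$ terms are handled by Proposition~\ref{prop:regularity-Rz}: here $R(ir)$ gains two derivatives in $H_r^s$, and the key point is that $\langle x\rangle^{-\delta}$, viewed as a map $H_r^s\to L^2$ for $\delta>s$, is of size $r^s$ (Lemma~\ref{lem:weight}). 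This rescaling is the source of the $|z|^{s_1+s_2}$ up to $s_j<d/2$, hence the $d$. For the mixed terms, the $z$-block in the middle is estimated by Theorem~\ref{th:mourre-Schro}, but this requires $\langle A_z\rangle$-weights; the flanking $R(ir)$-factors are then used (Proposition~\ref{prop:regularity-Rz-A}) to convert $\langle A_z\rangle^{\delta}$ back to $\langle x\rangle^{-\delta}$, again producing the $|z|^s$ gain. Your outline omits both the $R(z)\leftrightarrow R(ir)$ decomposition and the rescaled-Sobolev/weight-conversion mechanism; without them the argument cannot close.
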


\subsection{Estimates given by the commutators method} \label{sec:strategy-mourre}

It will be the purpose of Section \ref{sec:Mourre} to prove that we can apply the Mourre commutators method to operators of the form \eqref{eq:RzRo-terme1}. 

It is usual for a Schr\"odinger operator that this method gives uniform estimates for the resolvent near a positive frequency. Near 0, the size of the weighted resolvent is as required uniform with respect to the imaginary part of the spectral parameter, but the estimate blows up if its real part also goes to 0.\\

It is standard that an important role is played by the generator of dilations
\begin{equation} \label{def-A}
A_0 = - \frac {x \cdot i\nabla + i\nabla \cdot x} 2 = - \frac {id}2 - x \cdot i\nabla.
\end{equation}

Here we will not apply the commutators method directly with the operator $A_0$ as the conjugate operator. Since $P(z)$ is a small perturbation of $P_0(z)$ only at infinity, we will use as in \cite{BoucletBur21} a version of $A_0$ localized at infinity. More precisely, for some $\chi \in C_0^\infty(\R^d,[0,1])$ equal to 1 on a neighborhood of 0, we consider the operator
\begin{equation} \label{def-A-chi}
A_\chi = -\frac { (1-\chi) x \cdot i\nabla + i\nabla \cdot x (1-\chi)} 2.
\end{equation}
Its domain is the set of $u \in L^2$ such that ${(1-\chi(x))} {(x \cdot \nabla)u} \in L^2$ in the sense of distributions. This is also a selfadjoint operator on $L^2$ and for $\th \in \R$, $u \in L^2$ and $x \in \R^d$ we have 
\begin{equation} \label{eq:exp-iA}
(e^{-i\th A_\chi} u)(x) = \det (d_x \vf_\chi^\th(x))^{\frac 12}  u(\vf_\chi^\th(x)).
\end{equation}
where $\th \mapsto \vf_\chi^\th$ is the flow corresponding to the vector field $(1-\chi(x))x$.
            \detail 
            {
            \[
            \nr{e^{i\th A_\chi} u}_{L^2} = \nr{u}_{L^2}
            \]
            \begin{align*}
            \partial_\th \partial_{x_j} \vf_\chi^\th(x) \Big|_{\th = 0} = \partial_{x_j} \partial_\th \vf_\chi^\th(x)\Big|_{\th = 0} = \partial_{x_j} (1-\chi(x)) x.
            \end{align*}
            \[
            \partial_\th \mathsf{Jac}(\vf_\chi^\th)(x)\Big|_{\th = 0} = (- \partial_k\chi(x) x_j + (1-\chi(x)) \d_{j,k})_{1\leq j,k\leq d}  
            \]
            \[
            \partial_\th \det(\mathsf{Jac}(\vf_\chi^\th)(x)) \Big|_{\th = 0} = \mathsf{Tr}\big(\partial_\th \mathsf{Jac}(\vf_\chi^\th)(x)\Big|_{\th = 0} \Big) = - \nabla \chi(x) \cdot x + d (1-\chi(x)).
            \]
            \[
            \partial_\th \det(\mathsf{Jac}(\vf_\chi^\th)(x))^{\frac 12} \Big|_{\th = 0} = - \frac {\nabla \chi(x) \cdot x} 2 + \frac {d (1-\chi(x))} 2.
            \]
            \[
            \partial_\th u(\vf_\chi^\th(x)) \big|_{\th = 0} = \nabla u(x) \cdot (1-\chi(x)) x.
            \]
            \[
            iA_\chi u(x) = \partial_\th e^{i\th A_\chi} u(x) \big|_{\th=0} = 
            \]
            }

For $r \in \DD_+$ and $x \in \R^d$ we set $\chi_r (x) = \chi(r x)$. We will work with the operator $A_r = A_{\chi_r}$. For $z \in \DD$ we set $\chi_z = \chi_{\abs z}$ and  
\begin{equation} \label{def-Az}
A_z = A_{\chi_z}.
\end{equation}

With the rescaled versions of the resolvents, the estimates given by the commutators method read as follows.

\begin{theorem}
\begin{enumerate}[\rm (i)]
\item Let $n \in \N^*$ and $\d > n - \frac 12$. There exists $C > 0$ such that for $z \in \DD_+$ we have 
\begin{equation} \label{estim-Mourre-Schro-1}
\nr{\pppg {A_z}^{-\d} R^{[n]}(z) \pppg {A_z}^{-\d}}_{\Lc(L^2)} \leq C.
\end{equation}
\item Let $\rho \in [0,\rho_0[$. Let $n_1,n_2 \in \N^*$ and $\d > n_1 + n_2 - \frac 1 2$. Let $\sigma \in \{0,1\}$. There exists $C > 0$ such that for $z \in \DD_+$ we have 
\begin{equation} \label{estim-Mourre-Schro-2}
\nr{\pppg {A_z}^{-\d} R^{[n_1]}(z) \th_\sigma(z) R_0^{[n_2]}(z) \pppg {A_z}^{-\d}}_{\Lc(L^2)} \leq  C \abs z^{\rho}.
\end{equation}
\end{enumerate}
\label{th:mourre-Schro}
\end{theorem}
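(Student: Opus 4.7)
The plan is to reduce Theorem~\ref{th:mourre-Schro} to uniform semiclassical estimates via the rescaling $y = \abs z x$, and then to apply a version of the Mourre commutators method flexible enough to handle inverses of non-selfadjoint parameter-dependent operators and products of such inverses with intermediate factors.

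\textbf{Rescaling.} Set $h = \abs z \in (0,1]$ and $\z = z/\abs z$, and introduce the unitary $(V_h u)(y) = h^{-d/2} u(y/h)$ on $L^2(\R^d)$. A direct change of variables yields
\[
V_h R(z) V_h^* = h^{-2} \check R_h(\z), \qquad \check R_h(\z) := \bigl(-\divg \check G_h \nabla - \z^2 \check w_h\bigr)^{-1}, \qquad V_h R_0(z) V_h^* = h^{-2}(-\D - \z^2)^{-1},
\]
with $\check G_h(y) = G(y/h)$ and $\check w_h(y) = w(y/h)$; and crucially $V_h A_z V_h^* = A_\chi$ is \emph{independent} of $h$, because $\chi_{\abs z}(x) = \chi(hx)$ becomes $\chi(y)$ under $V_h$. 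The prefactor $\abs z^{2n}$ in $R^{[n]}(z)$ is absorbed exactly by the $n$ rescalings, and under $V_h$ the factors $\th_\sigma(z)$ become $\check w_h - 1$ if $\sigma = 0$ and $\divg(I - \check G_h)\nabla + \z^2(1 - \check w_h) = \check P_0(\z) - \check P_h(\z)$ if $\sigma = 1$, where $\check P_h(\z) = -\divg \check G_h \nabla - \z^2 \check w_h$ and $\check P_0(\z) = -\D - \z^2$. The theorem thus reduces to uniform-in-$(h,\z)$ estimates with the fixed conjugate operator $A_\chi$, carrying an extra $h^\rho$ factor in the case of (ii).

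\textbf{Mourre estimate and iteration.} For the flat Laplacian,
\[
[-\D, iA_\chi] = -2\D + K_\chi,
\]
with $K_\chi$ a second-order operator of compactly supported coefficients, giving a strict Mourre estimate at energies $\abs{\z^2} \simeq 1$ modulo compact. The perturbation $\check P_h(\z) - \check P_0(\z)$ has coefficients that are uniformly bounded in weighted $L^\infty$, while its \emph{commutators} with $A_\chi$ produce smaller objects: since $A_\chi$ is built from the vector field $(1-\chi(y))\, y$ supported where $1-\chi > 0$, each iterated commutator $[A_\chi, [A_\chi, \ldots (\check G_h - I) \ldots]]$ is supported in $\abs y \geq c_0 > 0$, where the bound $\pppg{y/h}^{-\rho_0} \leq C h^{\rho_0}$ is available. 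This yields a Mourre estimate for $\check P_h(\z)$ uniform in $h$, and (i) follows from the standard $n$-fold iteration. The decisive point, which substitutes for the usual reliance on functional calculus, is that the iteration invokes only the algebraic identity $\check P_h(\z) \check R_h(\z) = I$ together with the above commutator bounds; in particular, neither self-adjointness nor $\z$-independence of $\check P_h(\z)$ plays any role.

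\textbf{Estimate (ii) and main obstacle.} For (ii) the same iterative scheme is applied to the full product $\check R_h^{[n_1]}(\z)\,\check \th_\sigma(h)\,\check R_0^{[n_2]}(\z)$, viewed as a composition of two different generalized resolvents with an inserted intermediate factor. The extra $h^\rho$ gain comes from the same cutoff mechanism applied to $\check\th_\sigma(h)$: all iterated commutators of $\check\th_\sigma(h)$ with $A_\chi$ are $O(h^{\rho_0})$ in weighted $L^\infty$, and the Mourre iteration, when applied to the full product, effectively differentiates $\check\th_\sigma(h)$ at least once against $A_\chi$, which extracts the desired $h^\rho$ factor (for any $\rho < \rho_0$). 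The main technical obstacle — and the core of Section~\ref{sec:Mourre} — is arranging a single commutator framework that applies simultaneously to the two different generalized resolvents $\check R_h(\z)$ and $\check R_0(\z)$, accommodates intermediate factors $\check w_h$ and $\check\th_\sigma(h)$ on equal footing, and propagates the $h^\rho$ smallness through the whole product. Since $\check R_h(\z)$ is not a standard resolvent — its derivatives in $\z$ are not its powers and no spectral theorem is available — the Mourre iteration must be reformulated algebraically, using only the identities $\check P_h(\z) \check R_h(\z) = \check P_0(\z) \check R_0(\z) = I$ and uniform commutator estimates; this is the flexibility of the commutator method that the paper develops.
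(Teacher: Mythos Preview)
Your rescaling to a fixed conjugate operator $A_\chi$ is correct and matches the paper's strategy (the paper works equivalently in the rescaled Sobolev spaces $H_z^s$ without explicitly conjugating by the dilation). The outline for (i) is also essentially right: one establishes a uniform Mourre estimate for the rescaled operators $\check P_h(\zeta)$ and then iterates via a multiple-resolvent lemma. There the paper does the work you sketch, though the Mourre estimate itself (Proposition~\ref{prop:Az-conjugate}) requires a nontrivial compactness argument using spectral localisations $\Pi_{\eta,z}$ and the Helffer--Sj\"ostrand formula, not merely smallness of commutators of $\check G_h - I$.

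There is, however, a genuine gap in your argument for (ii). You claim that the Mourre iteration ``effectively differentiates $\check\th_\sigma(h)$ at least once against $A_\chi$'', but this is not what the iteration does. The multiple-resolvent lemma (Lemma~\ref{lem:multi-Mourre}) combines individual estimates of the form $\|\Theta^{-\s_j} T_j \Theta^{-\s_j}\|$, $\|\Theta^{\d_1}\Pi^- T_j \Pi^+\Theta^{\d_2}\|$, etc.; an intermediate bounded factor enters with $\nu_j = \s_j = 0$ via Proposition~\ref{prop:T-Ac}, which only asks that it lie in $\Cc^N_{A_\chi}(L^2)$ uniformly --- no commutator is forced onto it. And crucially, $\check\th_\sigma(h)$ itself is \emph{not} $O(h^\rho)$ in $\Lc(L^2)$: for instance $\check\th_0(h)(y) = w(y/h) - 1$ is of size $O(1)$ on the ball $\abs y \lesssim h$, and only $O(h^{\rho_0})$ on the support of $1-\chi$. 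Your cutoff mechanism gives smallness for the commutators $[A_\chi,\check\th_\sigma(h)]$, but not for the zeroth-order term, and the iteration uses the latter.

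The paper extracts the $\abs z^\rho$ gain by a different route. Using the resolvent identity $R(z) = R(ir) + (z^2 + r^2) R(ir) w R(z)$ (and its analogue for $R_0$), one writes, for $n_1, n_2 \ge 2$,
\[
R^{[n_1]}(z)\,\th_\sigma(z)\,R_0^{[n_2]}(z)
= \bigl(R^{[n_1-1]}(z) + (1+\hat z^2) R^{[n_1]}(z)\bigr)\,\tilde\th_\sigma(z)\,\bigl(R_0^{[n_2-1]}(z) + (1+\hat z^2) R_0^{[n_2]}(z)\bigr),
\]
where $\tilde\th_\sigma(z) = w\,R^{[1]}(ir)\,\th_\sigma(z)\,R_0^{[1]}(ir)$ and $r = \abs z$. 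The auxiliary elliptic resolvents $R^{[1]}(ir)$, $R_0^{[1]}(ir)$ supply Sobolev regularity in the rescaled spaces $H_z^s$; then the decay assumption \eqref{hyp-swa}, via Proposition~\ref{prop:dec-sob}, converts that regularity into the factor $\abs z^\rho$: one has $\|\th_\sigma(z)\|_{\Cc^N_z(H_z^{s+1}, H_z^{s-1-\rho})} \lesssim \abs z^\rho$ (Proposition~\ref{prop:Pa-Po-comm}), hence $\abs z^{-\rho}\tilde\th_\sigma(z)$ is uniformly in $\Cc^N_{A_z}(L^2)$. The multiple-resolvent lemma then applies with this modified intermediate factor already carrying the $\abs z^\rho$. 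In short: the smallness of $\th_\sigma$ is a Sobolev phenomenon, not an $A_\chi$-commutator phenomenon, and an extra resolvent-identity step is needed to exhibit it at the $L^2$ level before invoking the Mourre machinery.
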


The proof of Theorem \ref{th:mourre-Schro} is postponed to Section \ref{sec:Mourre}.

\subsection{Elliptic regularity in the low frequency Sobolev spaces}

Theorem \ref{th:mourre-Schro} is not enough to prove Proposition \ref{prop:lowfreq-Schro}. As in \cite{Bouclet11,BoucletRoy14, royer-dld-energy-space} we use the gain of regularity to get some smallness when $z$ is close to 0.\\

For $z \in \DD_+$ and $r = \abs z$ we have the resolvent identity
\begin{equation} \label{eq:res-identity}
\begin{aligned}
R(z) - R(ir) =(z^2 + r^2) R(ir) w R(z) = (z^2 + r^2) R(z) w R(ir).
\end{aligned}
\end{equation}
These factors $R(ir)$ will give the required regularity. Then we will use the weights $\pppg x^{-\d}$ to recover, in the end, estimates in $\Lc(L^2)$.\\

	\detail{
	We illustrate this idea by proving a uniform estimate for a simple resolvent.

	\begin{proposition}\label{prop:estim-res-simple}
	Let $\a_1,\a_2 \in \N^d$ with $\abs{\a_1} \leq 1$ and $\abs {\a_2} \leq 2$. There exists $C > 0$ such that for all $z \in \C_+$ we have 
	\[
	\nr{\pppg x\inv D^{\a_1} R(z) D^{\a_2} \pppg x\inv}_{\Lc(L^2)} \leq C.
	\]
	\end{proposition}

	\begin{proof}
	We begin with the case $\a_1 = \a_2 = 0$. By \eqref{eq:res-identity} we can write 
	\begin{equation} \label{eq:res-id-double}
	\begin{aligned}
	R(z) 
	& = R(i\abs z) + (i-\hat z) \abs z R(i \abs z) w R(i \abs z)\\
	& + (i-\hat z)^2 \abs z^2  R(i \abs z) w R(z) w R(i\abs z).
	\end{aligned}
	\end{equation}
	By Proposition \ref{prop:Ra} (applied with $a=0$) and the Hardy inequality we have for $u \in L^2$
	\[
	\nr{\pppg x\inv R(i\abs z) \pppg x\inv u}_{L^2} \lesssim \nr{R(i \abs z) \pppg x\inv u}_{\dot H^1} \lesssim \nr{\pppg x\inv u}_{\dot H\inv} \lesssim \nr{u}_{L^2}.
	\]
	Similarly, with the estimates of $R(i\abs z)$ in $\Lc(\dot H\inv,L^2)$ and $\Lc(L^2,\dot H^1)$ given by Proposition \ref{prop:Ra}, we can check that 
	\[
	\abs z \nr{\pppg x\inv R(i \abs z) w R(i \abs z) \pppg x\inv u}_{L^2} \lesssim \nr{u}_{L^2}.
	\]
	We estimate the last term.  By Theorem \ref{th:mourre-Schro}, the operator $(A_\chi + i)\inv R(z) (A_\chi + i)\inv$ is bounded on $L^2$ uniformly in $z \in \C^+$. \comm{A priori on peut remplacer $A_\chi$ par $A$} We estimate $\pppg x\inv R(i\abs z) (A + i)$. We have already estimated $\pppg x\inv R(i\abs z)$, so it is enough to consider the operator $\pppg x\inv R(i\abs z) x_j \partial_j$ for some $j \in \Ii 1 d$. We have 
	\[
	\pppg x\inv R(i\abs z) x_j \partial_j = \pppg x\inv x_j R(i\abs z) \partial_j + \pppg x\inv [R(i\abs z), x_j] \partial_j
	\]
	By Proposition \ref{prop:Ra} we have
	\[
	\nr{\pppg x\inv x_j R(i\abs z) \partial_j}_{\Lc(L^2)} \leq \nr{R(i\abs z) \partial_j}_{\Lc(L^2)} \lesssim \abs z^{-\frac 12}.
	\]
	Since 
	\[
	[R(i\abs z), x_j] = R(i\abs z) [\D_G,x_j] R(i\abs z) =  R(i\abs z) (\partial_{x_j} G + G \partial_{x_j})  R(i\abs z),
	\]
	we also have 
	\[
	\nr{\pppg x\inv [R(i\abs z), x_j] \partial_j}_{\Lc(L^2)} \leq \abs{z}^{\frac 12}.
	\]
	Finally we have 
	\[
	\nr{\pppg x\inv R(i\abs z) (A + i)}_{\Lc(L^2)} \lesssim \abs z^{-\frac 12}.
	\]
	We have a similar estimate for $(A + i) R(i\abs z) \pppg x\inv$, and hence 
	\[
	\abs z^2  \nr{\pppg x\inv R(i \abs z) w R(z) w R(i\abs z) \pppg x\inv} \lesssim 1.
	\]
	This concludes the proof if $\a_1 = \a_2 = 0$. The other cases can be proved similarly, or we can use the case $\a_1 = \a_2 = 0$ in the same spirit as for Proposition \ref{prop:Ra}.
	\end{proof}
	}

The following two propositions will be proved in Section \ref{sec:elliptic-regularity}.

\begin{proposition} \label{prop:regularity-Rz}
Let $\rho \in [0,\rho_0[$. Let $n_1,n_2 \in \N^*$ and $\sigma \in \{0,1\}$. Let $s_1,s_2 \in \big[0,\frac d 2\big[$, $\d_1 > s_1$ and $\d_2 > s_2$. 
There exists $C > 0$ such that for $z \in \DD_+$ and $r = \abs z$ we have 
\begin{equation*}
\nr{\pppg x^{-\d_1} R^{[n_1]}(ir) \th_\sigma(z) R_0^{[n_2]}(ir) \pppg x^{-\d_2}}_{\Lc(L^2)}\\
\leq C \abs z ^{\min(s_1+s_2+\rho,2n_1 + 2n_2 - 2\s)}.
\end{equation*}
\end{proposition}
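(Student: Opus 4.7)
The plan is to factor the composite operator through homogeneous Sobolev spaces, exploiting that at the imaginary spectral parameter $ir$, both $R(ir) = (-\D_G + r^2 w)\inv$ and $R_0(ir) = (-\D + r^2)\inv$ are inverses of uniformly elliptic positive operators and therefore improve Sobolev regularity at a precise $r$-dependent rate. Combined with the Hardy inequality, this regularity gain will translate into the required smallness in weighted $L^2$ as $r \to 0$.

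The key ingredients are as follows. By interpolating $R_0(ir) \in \Lc(L^2,\dot H^2)$ (bounded uniformly in $r$) with $R_0(ir) \in \Lc(L^2)$ (of norm $r^{-2}$), one obtains for $s_a,s_b \geq 0$ with $s_a + s_b \leq 2n$
\[
\nr{R_0(ir)^n}_{\Lc(\dot H^{-s_a}, \dot H^{s_b})} \lesssim r^{-(2n - s_a - s_b)},
\]
and the analogous estimate for $(R(ir) w)^{n-1} R(ir)$ by uniform ellipticity of $-\D_G$ and the fact that the smooth bounded $w$ is a multiplier on low-order Sobolev spaces. For $\th_0 = w - 1 = O(\pppg x^{-\rho_0})$, the Hardy inequality $\nr{\pppg x^{-\rho_0} u}_{L^2} \lesssim \nr u_{\dot H^\rho}$ (valid for $\rho \in [0,\rho_0) \cap [0,d/2)$) gives $\th_0 \in \Lc(\dot H^\rho, L^2)$. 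For $\s = 1$, writing $\abs z^2 \th_1(z) = -\divg((G-\Id)\nabla) - z^2(w-1)$ and applying the same argument to $\nabla u$ yields $\th_1(z) \in \Lc(\dot H^{1+\rho}, \dot H^{-1})$ with norm $O(r^{-2})$. Finally, Hardy gives $\pppg x^{-\d_i} \in \Lc(\dot H^{s_i}, L^2)$ and, by duality, $\pppg x^{-\d_i} \in \Lc(L^2, \dot H^{-s_i})$, for $\d_i > s_i$ and $s_i < d/2$.

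Composing through the chain
\[
L^2 \xrightarrow{\pppg x^{-\d_2}} \dot H^{-s_2} \xrightarrow{R_0^{[n_2]}(ir)} \dot H^{\s + \rho} \xrightarrow{\th_\s(z)} \dot H^{-\s} \xrightarrow{R^{[n_1]}(ir)} \dot H^{s_1} \xrightarrow{\pppg x^{-\d_1}} L^2,
\]
and accounting for the factor $r^{2n_i}$ present in each $R_*^{[n_i]}$, the norms multiply to $r^{s_2 + \s + \rho} \cdot r^{-2\s} \cdot r^{s_1 + \s} = r^{s_1 + s_2 + \rho}$, which gives the first bound of the minimum. The factorization requires $s_2 + \s + \rho \leq 2n_2$ and $s_1 + \s \leq 2n_1$; in the complementary regime $s_1 + s_2 + \rho > 2n_1 + 2n_2 - 2\s$, one replaces $s_1, s_2$ by smaller values $\tilde s_i \leq s_i$ (so that $\d_i > \tilde s_i$ automatically holds) saturating $\tilde s_1 + \tilde s_2 + \rho = 2n_1 + 2n_2 - 2\s$, and the argument applied to these parameters yields exactly the second bound of the minimum for the original data.

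The main obstacle will be the careful bookkeeping of Sobolev indices in the iterated resolvent bounds, especially for $R(ir)$ combined with the inserted $w$ factors, and verifying that the Hardy inequality applied to $\nabla u$ in the case $\s = 1$ remains within the admissible range; the constraints $\rho_0 \leq 1$ and $d \geq 2$ make this consistent.
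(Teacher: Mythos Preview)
Your chain-of-spaces strategy is the right picture, and for $R_0$ everything you claim is fine. The gap is in the step
\[
\dot H^{-\sigma} \xrightarrow{\;R^{[n_1]}(ir)\;} \dot H^{s_1}
\]
with the asserted norm $r^{-(2n_1-\sigma-s_1)}$. This needs two facts you treat as routine but which are precisely the technical core of the paper's proof: (i) that $R(ir)$ gains two derivatives uniformly in $r$ when measured in (homogeneous or rescaled) Sobolev spaces beyond the form level $\dot H^{-1}\to\dot H^1$, and (ii) that $w$ is a multiplier on $\dot H^s$ for $s\neq 0$. Neither follows from ``uniform ellipticity of $-\Delta_G$''. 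For (i), standard elliptic regularity only gives $\|u\|_{H^2}\lesssim\|\Delta_G u\|_{L^2}+\|u\|_{L^2}$; since $\|R(ir)f\|_{L^2}\sim r^{-2}\|f\|_{L^2}$, the lower-order term destroys uniformity, and you genuinely cannot get $R(ir):L^2\to\dot H^{s_1}$ with norm $r^{-(2-s_1)}$ for $s_1>1$ (needed whenever $d\geq 3$ and $s_1$ is close to $d/2$). For (ii), $\nabla(wu)=(\nabla w)u+w\nabla u$ produces an $L^2$ term that is not controlled by $\|u\|_{\dot H^1}$.

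The paper confronts exactly these obstructions. It works in the rescaled inhomogeneous spaces $H^s_r$ (so that $w$ and more generally $1+\Sc^{-\eta}$ are multipliers, Remark~3.2), and it proves the two-derivative gain $R(z)\in\Lc(H_z^{s-1},H_z^{s+1})$ \emph{only under a smallness assumption on $G-\mathrm{Id}$} (Proposition~4.2). Since no such smallness is hypothesised, the metric is split as $G=G_0+G_\infty$ with $G_0$ compactly supported and $\|G_\infty-\mathrm{Id}\|_{\Sc^{-\bar\rho}}$ small (Lemma~3.4); the estimate is first established for $R_\infty$ and then transferred to $R$ via the resolvent identity $R=R_\infty+|z|^{-2}\sum_k R^{[k]}\Delta_{G_0}R_\infty^{[\cdot]}$, using that $\Delta_{G_0}$ carries compactly supported coefficients and two derivatives which are traded back against already-proved weighted bounds with an extra $D^\alpha$. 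Your anticipated ``careful bookkeeping'' is in fact this decomposition-plus-bootstrap argument, and it is not optional.
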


We observe that in Proposition \ref{prop:lowfreq-Schro} we work in weighted spaces, and the weight is given by negative powers of $x$. But for the commutators method in Theorem \ref{th:mourre-Schro} we need negative powers of the generator of dilations $A_z$, which also contains derivatives. Thus we also have to use the regularity of $R(ir)$ to turn estimates with weights $\pppg {A_z}^{-\d}$ into estimates with $\pppg x^{-\d}$.

\begin{proposition} \label{prop:regularity-Rz-A}
Let $\rho \in [0,\rho_0[$ and $\sigma \in \{0,1\}$. Let $s \in \big[0,\frac d 2 \big[$ and $\d > s$. Let $N,n \in \N^*$. There exist $N_0 \in \N$ and $C > 0$ such that if $N \geq N_0$ then for $z \in \DD_+$ and $r = \abs z$ we have
\begin{eqnarray} 
\label{eq:xRA-Schro-1}
\big\| \pppg x^{-\d} R^{[N]}(ir) w \pppg {A_z}^\d \big\|_{\Lc(L^2)} & \leq & C \abs z^{ s},\\
\label{eq:xRA-Schro-2}
\big\| \pppg x^{-\d} R^{[n]}(ir) \th_\sigma(z) R_0^{[N]}(ir) \pppg {A_z}^\d\big\|_{\Lc(L^2)} & \leq & C \abs z^{s + \rho},\\
\label{eq:xRA-Schro-3}
\big\|\pppg {A_z}^\d R_0^{[N]}(ir) \pppg x^{-\d}\big\|_{\Lc(L^2)} & \leq & C \abs z^{s},\\
\label{eq:xRA-Schro-4}
\big\|\pppg {A_z}^\d w R^{[N]}(ir) \th_\sigma(z) R_0^{[n]}(ir) \pppg x^{-\d}\big\|_{\Lc(L^2)} & \leq & C \abs z^{s + \rho}.
\end{eqnarray}
\end{proposition}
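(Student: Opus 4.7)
The plan is to prove the four estimates in parallel, since they share the same structure: a product of resolvents---with possibly $w$ and $\th_\sigma(z)$ inserted---sandwiched between a spatial weight $\pppg x^{-\d}$ on one side and the weight $\pppg{A_z}^\d$ on the other. Choosing $N \geq N_0$ for a suitable threshold $N_0$ depending on $\d$ ensures that the resolvent factor adjacent to $\pppg{A_z}^\d$ provides enough smoothing to absorb this weight via commutator arguments, while $\pppg x^{-\d}$, together with the remaining resolvents, supplies the $\abs z^s$ smallness via Proposition \ref{prop:regularity-Rz}. In (ii) and (iv), the extra factor $\abs z^\rho$ is provided by $\th_\sigma(z)$ through the decay hypothesis \eqref{hyp-swa}.

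Focusing on (i), I would first use the identity $R(ir)w = (P + r^2)^{-1}$ to rewrite $R^{[N]}(ir)w = r^{2N}(P+r^2)^{-N}$, which is uniformly bounded on $L^2$ by the spectral theorem for $P$ on $L^2_w$. To absorb $\pppg{A_z}^\d$, I would use a Helffer--Sj\"ostrand-type integral representation
\[
\pppg{A_z}^\d = c_\d \int_{\C} \bar\partial \tilde f(\m)\,(A_z - \m)^{-1}\,dL(\m),
\]
with $f(\l) = \pppg \l^\d$ and $\tilde f$ an almost analytic extension, reducing the task to uniform-in-$\m$ bounds on $R^{[N]}(ir) w\, (A_z-\m)^{-1}$. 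These I would obtain by commuting $(A_z - \m)^{-1}$ successively through the factors $R(ir)$: each commutator $[A_z, -\D_G + r^2 w]$ is a second-order differential operator with coefficients supported on $\{\abs x \gtrsim 1/r\}$ (by construction of $A_z$) and relatively bounded by $-\D_G + r^2 w$ with constants uniform in $r$---this is precisely the point of the scale-$1/\abs z$ localization in the definition of $A_z$. After $\lceil \d \rceil$ such commutations, each paid for by one factor of $R(ir)$ (absorbed into the initial $N$ by choosing $N_0$ large enough), $\pppg{A_z}^\d$ is effectively replaced by a bounded operator, and the remaining estimate for $\pppg x^{-\d}(P+r^2)^{-k}$ yields the factor $\abs z^s$ by Proposition \ref{prop:regularity-Rz}.

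Estimates (ii)--(iv) follow the same scheme. In (ii) and (iv), the extra decay $\abs z^\rho$ arises from the decomposition $\th_\sigma(z) = \pppg x^{-\rho}\tilde\th_\sigma(z)\pppg x^{-\rho}$ with $\tilde\th_\sigma(z)$ bounded thanks to \eqref{hyp-swa}, after which the weights $\pppg x^{-\rho}$ are absorbed by the adjacent resolvent factors via Proposition \ref{prop:regularity-Rz}. Finally, (iii) and (iv) correspond (modulo the remark that $\th_\sigma(z)^*$ satisfies estimates of the same form as $\th_\sigma(z)$) to the $L^2$-adjoints of the patterns in (i) and (ii), using that $R(ir)$, $R_0(ir)$, $w$ and $A_z$ are self-adjoint on $L^2$. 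The main obstacle is to carry out the commutator arguments with constants uniform in $z$: since the coefficients of $A_z$ depend on $z$ through $\chi_z(x) = \chi(\abs z x)$, their localization scale $1/\abs z$ must match the scale of $R(ir)$ throughout the argument, requiring careful bookkeeping of the $R(ir)$ factors consumed by the successive commutators.
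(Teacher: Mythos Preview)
Your plan has two concrete gaps. First, the Helffer--Sj\"ostrand formula you invoke for $\pppg{A_z}^\d$ does not apply as stated: the function $\l\mapsto\pppg{\l}^\d$ grows, so it is not in the symbol class $\{\vf:\vf^{(k)}(\tau)\lesssim\pppg{\tau}^{-k-\kappa},\ \kappa>0\}$ for which the representation \eqref{eq:Helffer-Sjostrand} converges. Even if you patch this by subtracting a polynomial in $A_z$, the commutator scheme you describe (pushing $(A_z-\mu)^{-1}$ through the $R(ir)$ factors) does not obviously terminate, since each commutation produces a new factor $R(ir)[A_z,P(ir)]R(ir)$ together with an extra $(A_z-\mu)^{-1}$; you have not explained how this yields a bound with the required $\abs{\Im\mu}$-dependence, nor how the spatial weight $\pppg{x}^{-\d}$ is ever brought into play to extract the factor $\abs z^s$. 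Second, the duality claim is incorrect: \eqref{eq:xRA-Schro-3} involves $R_0^{[N]}$, not $R^{[N]}$, so it is not the adjoint of \eqref{eq:xRA-Schro-1}; and the adjoint of \eqref{eq:xRA-Schro-2} has the form $\pppg{A_z}^\d R_0^{[N]}(ir)\th_\sigma^* R^{[n]}(ir)\pppg x^{-\d}$, with $R$ and $R_0$ in the opposite order from \eqref{eq:xRA-Schro-4}. Each of the four estimates requires its own argument (or at least its own instance of a common argument).

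The paper's route is different and avoids both issues. It introduces an intermediate rescaled spatial weight $\pppg{rx}^{\pm\d}$ and splits the problem in two. In Proposition~\ref{prop:elliptic-Schro}(ii) it proves the analogues of \eqref{eq:xRA-Schro-1}--\eqref{eq:xRA-Schro-4} with $\pppg{rx}^\d$ in place of $\pppg{A_z}^\d$; this uses elliptic regularity in the rescaled Sobolev spaces $H_r^s$ and commutators $\ad_{rx}^\mu$ (which are scale-matched to $R(ir)$), together with Lemma~\ref{lem:weight} to cash in the weight $\pppg x^{-\d}$ for $\abs z^s$. Then Proposition~\ref{prop:xTA} shows that $\pppg{rx}^{-\d}R^{[n]}(ir)w\pppg{A_r}^\d$ (and the $R_0$ version) is uniformly bounded for $n$ large enough: the key identity is $A_r^k\, rx_\ell = rx_\ell\,(A_r-i(1-\chi_r))^k$ from \eqref{eq:comm-Ak-xl}, which lets one trade one power of $A_r$ for one factor $rx_\ell$ (absorbed by $\pppg{rx}^{-\d}$) plus one derivative $r^{-1}D_\ell$ (absorbed by one factor $R(ir)$), and proceed by induction on $k$. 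Composing the two steps gives the proposition. The intermediate weight $\pppg{rx}^\d$ is the missing idea in your plan.
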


To prove these two results, we will work in rescaled Sobolev spaces. We set $D = \sqrt{-\D}$ and, for $r \in ]0,1]$, we define $D_r = D/r$. Then for $s \in \R$ we denote by $H^s_r$ and $\dot H^s_r$ the usual Sobolev spaces $H^s$ and $\dot H^s$, endowed repectively with the norms defined by 
\begin{equation*} 
\nr{u}_{H^s_r} = \nr{\pppg {D_r}^s u}_{L^2}, \quad \nr{u}_{\dot H^s_r} = \nr{D_r^s u}_{L^2}.
\end{equation*}
In particular
\begin{equation} \label{eq:dot-Hrs}
\nr{u}_{\dot H^s} = r^s \nr{u}_{\dot H^s_r},
\end{equation}
and for $\a \in \N^d$ and $s \in \R$ the operator $D^\a = (-i\partial_x)^\a$ defines an operator from $H_r^s$ to $H_r^{s-\abs \a}$ of size $r^{\abs \a}$. 
Finally, for $r > 0$ we denote by $O_r$ the dilation defined by
\begin{equation} \label{def-dilation}
O_r u(x) = r^{\frac d 2} u (rx).
\end{equation}
Then $O_r$ is a unitary operator from $H^s$ to $H_r^s$ of from $\dot H^s$ to $\dot H_r^s$.
For $z \in \DD_+$ we set $H_z^s=H_{\abs z}^s$ and $O_z = O_{\abs z}$.

\subsection{Proof of Theorem \ref{th:lowfreq-Schrodinger}}

Assuming Theorem \ref{th:mourre-Schro} and Propositions \ref{prop:regularity-Rz} and \ref{prop:regularity-Rz-A} we can now give a proof for Proposition \ref{prop:lowfreq-Schro}. We recall that Proposition \ref{prop:lowfreq-Schro} implies Theorem \ref{th:lowfreq-Schrodinger}.

\begin{proof}[Proof of Proposition \ref{prop:lowfreq-Schro}]
Let $z \in \DD_+$. We set $r = \abs z$ and $\hat z = z / r$. Let $n \in \N^*$. With \eqref{eq:res-identity} we can prove by induction on $N \in \N$ that 
\begin{align}
R^{[n]}(z)
\label{eq:Rz-Ri-terme1} & = \sum_{m = n}^{N} C_{m-1}^{n-1} (1 + \hat z^2)^{m-n} R^{[m]}(ir)\\
\label{eq:Rz-Ri-terme2} & + \sum_{\nu = \max(1,n-N)}^{n} C_{N}^{n-\nu} (1 + \hat z^2)^{N-n+\nu} R^{[N]}(ir) w R^{[\nu]}(z).
\end{align}
Similarly,
\begin{align}
R_0^{[n]}(z)
\label{eq:Rz-Ri-terme3} & = \sum_{m = n}^{N} C_{m-1}^{n-1} (1 + \hat z^2)^{m-n} R_0^{[m]}(ir)\\
\label{eq:Rz-Ri-terme4} & + \sum_{\nu = \max(1,n-N)}^{n} C_{N}^{n-\nu} (1 + \hat z^2)^{N-n+\nu} R_0^{[\nu]}(z) R_0^{[N]}(ir).
\end{align}

Assume that in \eqref{estim-RR_0} we replace $R^{[n_1]}(z)$ and $R_0^{[n_2]}(z)$ by terms of the form \eqref{eq:Rz-Ri-terme1} and \eqref{eq:Rz-Ri-terme3}, respectively. Then it is enough to prove that for some $m_1\geq n_1$ and $m_2 \geq n_2$ 
\begin{equation} \label{eq:terme-11}
\nr{\pppg x^{-\d} R^{[m_1]}(ir) \th_\sigma(z) R_0^{[m_2]}(ir) \pppg x^{-\d}} \lesssim \abs z^{\min(d + \rho_1,2(n_1+n_2-\s))} .
\end{equation}
Given $\rho \in ]\rho_1,\rho_0[$, this is a consequence of Proposition \ref{prop:regularity-Rz} applied with $\d_1 = \d_2 = \d$ and
\begin{equation} \label{eq:choix-s}
s_1 = s_2 = \min \left( \frac {d + \rho_1 - \rho} 2, n_1 + n_2 - \s \right).
\end{equation}

Now assume that in \eqref{estim-RR_0} we replace $R^{[n_1]}(z)$ and $R_0^{[n_2]}(z)$ by terms of the form \eqref{eq:Rz-Ri-terme2} and \eqref{eq:Rz-Ri-terme4}, where $N$ can be chosen as large as we wish. By \eqref{estim-Mourre-Schro-2}, \eqref{eq:xRA-Schro-1} and \eqref{eq:xRA-Schro-3} applied with $s$ as in \eqref{eq:choix-s} we have for $\nu_1 \leq n_1$, $\nu_2 \leq n_2$ and $N_1,N_2 \geq N_0$
\begin{equation*} 
\nr{\pppg x^{-\d} R^{[N_1]}(ir) w R^{[\nu_1]}(z) \th_\sigma(z) R_0^{[\nu_2]}(z) R_0^{[N_2]}(ir) \pppg x^{-\d}} \lesssim \abs z^{\min(d + \rho_1,2(n_1+n_2-\s))}.
\end{equation*}

Then we consider the case where $R^{[m_1]}(z)$ is replaced by a term of the form \eqref{eq:Rz-Ri-terme2} and $R_0^{[m_2]}(z)$ is replaced by a term of the form \eqref{eq:Rz-Ri-terme3}. In this case we have to estimate an operator of the form 
\[
\pppg {x}^{-\d} R^{[N_1]}(ir) w R^{[\nu_1]}(z) \th_\sigma(z) R_0^{[m_2]}(ir) \pppg {x}^{-\d},
\]
where $\nu_1 \leq n_1$, $m_2 \geq n_2$, and $N_1$ can be chosen arbitrarily large. If $m_2$ is too small, we cannot apply \eqref{eq:xRA-Schro-3} on the right of $R^{[\nu_1]}(z)$ (to which we apply Theorem \ref{th:mourre-Schro}). Then we proceed with more resolvent identities. More precisely, we apply \eqref{eq:Rz-Ri-terme1}-\eqref{eq:Rz-Ri-terme2} to $R^{[\nu_1]}(z)$, replacing $R^{[N]}(ir) w R^{[\nu]}(z)$ by $R^{[\nu]}(z) w R^{[N]}(ir)$ in \eqref{eq:Rz-Ri-terme2}. Now we have to estimate terms of the form \eqref{eq:terme-11} or 
\[
\pppg {x}^{-\d} R^{[N_1]}(ir) w R^{[\nu]}(z) w R^{[N]}(ir) \th_\sigma(z) R_0^{[m_2]}(ir) \pppg {x}^{-\d},
\]
with $N,N_1$ large, $\nu \leq n_1$ and $m_2 \geq n_2$. For such a term, we apply Theorem \ref{th:mourre-Schro} to the factor $R^{[\nu]}(z)$, and then \eqref{eq:xRA-Schro-1} and \eqref{eq:xRA-Schro-4} on each side. 

Finally, if $R^{[n_1]}(z)$ is replaced by a term of the form \eqref{eq:Rz-Ri-terme1} and $R_0^{[n_2]}(z)$ by a term of the form \eqref{eq:Rz-Ri-terme4} we proceed as in the previous case. We omit the details.
\end{proof}

\section{Preliminary results} \label{sec:preliminary-results}

In this section we give some preliminary results which will be used in the next two sections. We fix $\rho \in [0,\rho_0[$ and $\bar \rho \in ]\rho,\rho_0[$.

\subsection{Decaying coefficients}

The gain $\abs z^{\rho}$ in all the estimates involving $\th_\sigma(z)$ (see \eqref{estim-Mourre-Schro-2}, \eqref{eq:xRA-Schro-2}, \eqref{eq:xRA-Schro-4} and Proposition \ref{prop:regularity-Rz}) is due to the decay of the coefficients given by the assumption \eqref{hyp-swa}. We recall this property in this paragraph.\\

We fix an integer $d_0$ greater than $\frac d 2$. For $\kappa \geq 0$ we denote by $\Sc^{-\kappa}$ the set of smooth functions $\vf$ such that 
\begin{equation} \label{def-nr-nu-N}
\nr{\vf}_{\Sc^{-\kappa}} =  \sup_{\abs \a \leq d_0}  \sup_{x \in \R^d} \big| \pppg x ^{\kappa +\abs \a} \partial^\a  \vf(x) \big| < +\infty.
\end{equation}

After conjugation by $O_r$ (see \eqref{def-dilation}), the following statement is Proposition 7.2 in \cite{BoucletRoy14}.

\begin{proposition} \label{prop:dec-sob}
Let $s \in \big] -\frac d 2, \frac d 2\big[$ and $\kappa \geq 0$ be such that $s -\kappa \in \big] -\frac d 2, \frac d 2\big[$. Let $\eta > 0$. There exists $C \geq 0$ such that for $\vf \in \Sc^{-\kappa-\eta}$, $u \in H^s$ and $r \in ]0,1]$ we have
\[
 \nr{\vf u}_{H_r^{s-\kappa}} \leq C r ^\kappa \nr \vf_{\Sc^{-\kappa-\eta}} \nr u _{H_r^s}.
\]
\end{proposition}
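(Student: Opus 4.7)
My plan is to reduce the claim to the corresponding fixed-scale multiplication theorem in standard Sobolev spaces, namely Proposition 7.2 of \cite{BoucletRoy14}, by conjugating everything with the unitary dilation $O_r$. Since $O_r$ is a unitary isomorphism $H^\sigma \to H^\sigma_r$ for every $\sigma \in \R$, setting $v := O_r^{-1} u$ gives $\nr{v}_{H^s} = \nr{u}_{H^s_r}$ and $\nr{\vf u}_{H^{s-\kappa}_r} = \nr{O_r^{-1} \vf O_r v}_{H^{s-\kappa}}$. A direct computation shows that $O_r^{-1} M_\vf O_r = M_{\psi_r}$ with $\psi_r(x) := \vf(x/r)$, so the inequality to be proved is equivalent to the fixed-scale multiplier estimate
\[
\nr{\psi_r v}_{H^{s-\kappa}} \leq C r^\kappa \nr{\vf}_{\Sc^{-\kappa-\eta}} \nr{v}_{H^s}.
\]

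Next I would verify that $\psi_r$ has the correct symbolic behavior with explicit scaling. For $r \in ]0,1]$ and $|\alpha| \leq d_0$,
\[
|\partial^\alpha \psi_r(x)| = r^{-|\alpha|}|(\partial^\alpha \vf)(x/r)| \leq \nr{\vf}_{\Sc^{-\kappa-\eta}} r^{\kappa+\eta}(r^2+|x|^2)^{-(\kappa+\eta+|\alpha|)/2},
\]
where I rewrote $\langle x/r \rangle^{-p} = r^p (r^2+|x|^2)^{-p/2}$. This is exactly the pointwise hypothesis of Proposition 7.2 of \cite{BoucletRoy14}, which then yields the bound: the factor $r^\kappa$ comes directly from the scaling of $\psi_r$, while the spare $r^\eta$ absorbs the borderline losses inherent to Sobolev multiplication.

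The main obstacle is the underlying multiplication theorem itself. To prove it, I would split space into $|x| \leq r$ and $|x| \geq r$. On the first region H\"older's inequality against the characteristic function of a ball of volume $\sim r^d$, combined with the Sobolev embedding $H^s \hookrightarrow L^{2d/(d-2s)}$, produces the factor $r^\kappa$ directly from the measure of the ball; on the second, the pointwise decay $|\psi_r(x)| \lesssim r^{\kappa+\eta}|x|^{-\kappa-\eta}$ is paired with the Hardy inequality $\nr{|x|^{-\kappa} v}_{L^2} \lesssim \nr{v}_{\dot H^\kappa}$ to recover the same factor. The full range $s,s-\kappa \in \bigl]-\tfrac{d}{2},\tfrac{d}{2}\bigr[$ is then covered by complex interpolation against the trivial case $\kappa = 0$ (where $\psi_r$ is uniformly bounded in $L^\infty$) and by duality, which exchanges the role of $s$ and $\kappa - s$.
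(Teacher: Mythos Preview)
Your reduction is exactly the paper's: the paper's entire proof is the one sentence ``after conjugation by $O_r$, the following statement is Proposition 7.2 in \cite{BoucletRoy14}'', with no further argument. Your computation of $O_r^{-1} M_\vf O_r = M_{\psi_r}$ and the scaling of $\psi_r$ make this reduction explicit, and your final paragraph sketches the proof of the cited result, which the paper does not attempt.

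One small point worth noting: the paper remarks immediately afterward that Proposition 7.2 of \cite{BoucletRoy14} was only stated for $\kappa < \tfrac{d}{2}$, and your sketch inherits this restriction through the Hardy step $\nr{|x|^{-\kappa} v}_{L^2} \lesssim \nr{v}_{\dot H^\kappa}$, which fails for $\kappa \geq \tfrac{d}{2}$; interpolation against $\kappa = 0$ and duality (which swaps $s \leftrightarrow \kappa - s$ but keeps $\kappa$ fixed) do not reach $\kappa \in [\tfrac{d}{2}, d[$. For that range one necessarily has $s - \kappa \leq 0 \leq s$, and the paper gives a direct two-line argument via Sobolev embedding and H\"older (essentially your ``near the origin'' piece applied globally, writing $\nr{\vf u}_{\dot H^{s-\kappa}} \lesssim \nr{\vf u}_{L^{2d/(d+2(\kappa-s))}} \lesssim \nr{\vf}_{L^{d/\kappa}} \nr{u}_{L^{2d/(d-2s)}}$), bypassing Hardy entirely.
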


\begin{remark} \label{rem:dec-sob}
In particular, if $\vf \in \Sc^{-\eta}$ for some $\eta > 0$, then for any $s \in \big]-\frac d 2,\frac d 2 \big[$ the multiplication by $(1+\vf)$ defines a bounded operator on $H_r^s$ uniformly in $r \in ]0,1]$.
\end{remark}

\begin{remark} \label{rem:easy-case}
In \cite{BoucletRoy14}, Proposition \ref{prop:dec-sob} was only given for $\kappa < \frac d 2$, but if $\kappa \geq \frac d 2$ we necessarily have $s-\kappa \leq 0 \leq s$ and in this case we simply write, by the Sobolev embeddings and the H\"older inequality,
\begin{equation} \label{eq:rem-easy-case}
\begin{aligned}
\nr{\vf u}_{H_r^{s-\kappa}}
& \leq \nr{\vf u}_{\dot H_r^{s-\kappa}}
= r^{\kappa-s} \nr{\vf u}_{\dot H^{s-\kappa}} 
\lesssim r^{\kappa-s} \nr{\vf u}_{L^{\frac {2d}{d+2(\kappa-s)}}} \\
& \lesssim r^{\kappa-s} \nr{\vf}_{L^{\frac d \kappa}} \nr{u}_{L^{\frac {2d}{d-2s}}}
\lesssim r^{\kappa-s} \nr{\vf}_{\Sc^{-\kappa-\eta}} \nr{u}_{\dot H^s}
\lesssim r^\kappa\nr{\vf}_{\Sc^{-\kappa-\eta}} \nr{u}_{\dot H_r^s}\\
& \lesssim r^\kappa\nr{\vf}_{\Sc^{-\kappa-\eta}} \nr{u}_{H_r^s}.
\end{aligned}
\end{equation}
\end{remark}

Proposition \ref{prop:dec-sob} explains how the weights which appear in the resolvent estimates can be used to convert some regularity into a power of the small spectral parameter $z$. As a particular case of \eqref{eq:rem-easy-case}, we record the following estimates.

\begin{lemma} \label{lem:weight}
Let $s \in \big[0,\frac d 2 \big[$ and $\d > s$. There exists $C > 0$ such that for $r \in ]0,1]$ we have 
\[
\|\pppg x^{-\d} \|_{\Lc(H_r^s,L^2)} \leq C \, r^s 
\quad \text{and} \quad
\|\pppg x^{-\d} \|_{\Lc(L^2,H_r^{-s})} \leq C \, r^s.
\]
\end{lemma}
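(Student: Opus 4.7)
The plan is to derive both inequalities directly from Proposition \ref{prop:dec-sob} (equivalently, from the computation in Remark \ref{rem:easy-case}), which is the only quantitative input needed. For the first estimate, I would apply Proposition \ref{prop:dec-sob} with $\vf = \pppg x^{-\d}$, $\kappa = s$, and $\eta = \d - s > 0$. Since $s \in [0, d/2)$, both $s$ and $s - \kappa = 0$ lie in $(-d/2, d/2)$, so the hypotheses are met; moreover $\pppg x^{-\d} \in \SSS^{-\d} = \SSS^{-\kappa - \eta}$ with finite semi-norm, so the proposition yields
\[
\nr{\pppg x^{-\d} u}_{L^2} = \nr{\pppg x^{-\d} u}_{H_r^{0}} \lesssim r^s \, \nr{u}_{H_r^s},
\]
which is precisely the first bound.

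For the second estimate, I would argue by duality. Since $H_r^{-s}$ is the dual of $H_r^s$ via the $L^2$ pairing (the operator $\pppg{D_r}^s$ is an isometric isomorphism between the two), and since multiplication by $\pppg x^{-\d}$ is self-adjoint on $L^2$, the adjoint of the bounded operator $\pppg x^{-\d} \colon H_r^s \to L^2$ is the operator $\pppg x^{-\d} \colon L^2 \to H_r^{-s}$, and the two norms coincide. The $r^s$ bound already obtained for the first operator then transfers directly to the second.

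No serious obstacle is anticipated: the lemma is a formal consequence of Proposition \ref{prop:dec-sob} together with the standard duality between $H_r^s$ and $H_r^{-s}$. The only point worth checking carefully is that the duality constants are independent of $r$, which follows from the fact that $\pppg{D_r}^s$ is an isometry $H_r^s \to L^2$ by definition of the rescaled norm.
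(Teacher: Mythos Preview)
Your proposal is correct and matches the paper's approach: the paper introduces the lemma as ``a particular case of \eqref{eq:rem-easy-case}'', which is exactly Proposition~\ref{prop:dec-sob} specialized to $\kappa = s$ (so that the target space is $H_r^0 = L^2$) and, for the second inequality, to the dual choice (source space $L^2$, target $H_r^{-s}$). Your duality argument for the second estimate is an equally valid way to obtain it and costs nothing extra. One small notational slip: the symbol class in the paper is $\Sc^{-\d}$ (with $\mathcal S$), not $\SSS^{-\d}$.
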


With Proposition \ref{prop:dec-sob} we also see that the decay of the coefficients in \eqref{hyp-swa} gives smallness for the operators $\th_\s(z)$ defined in \eqref{def-th}.

\begin{proposition} \label{prop:Pa-Po}
Let $\rho' \in [0,\rho]$ and $s \in \big]-\frac d 2 + \rho', \frac d 2 \big[$. There exists $C > 0$ which only depends on $s$, $\rho'$ and $\bar \rho$ such that for $z \in \DD_+$ we have 
\[
\nr{w-1}_{\Lc(H_z^s,H_z^{s-\rho'})} \leq C \nr{w-1}_{\Sc^{-\bar \rho}} \abs z^{\rho'}
\]
and 
\begin{equation*}
\big\| P(z) - P_0(z) \big\|_{\Lc(H_z^{s+1},H_z^{s-1-\rho'})} \leq C \left( \abs z^{2+\rho'} \nr{G-\Id}_{\Sc^{-\bar \rho}} + \abs z^{2+\rho}\nr{w-1}_{\Sc^{-\bar \rho}} \right).
\end{equation*}
In particular, for any $s \in \big]-\frac d 2, \frac d 2 \big[$ we have
\[
\big\| P(z) \big\|_{\Lc(H_z^{s+1},H_z^{s-1})} \leq 1+ C \abs z^{2} \left(  \nr{G-\Id}_{\Sc^{-\bar \rho}} + \nr{w-1}_{\Sc^{-\bar \rho}} \right).
\]
\end{proposition}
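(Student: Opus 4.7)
The plan is to derive the three bounds from three elementary ingredients: the decay hypothesis \eqref{hyp-swa}, which places $w-1$ and each entry of $G-\Id$ in $\Sc^{-\rho_0} \subset \Sc^{-\bar\rho}$; the rescaling identity $\partial_j \in \Lc(H_z^{\sigma+1}, H_z^\sigma)$ with operator norm bounded by $\abs z$ (since $\partial_j = \abs z\,(D_z)_j$); and the continuous inclusion $H_z^a \hookrightarrow H_z^b$ for $a \geq b$ (read off the Fourier symbol). The only non-elementary input is Proposition \ref{prop:dec-sob}, which controls the multiplication operators associated with $\Sc^{-\kappa-\eta}$-symbols on the rescaled Sobolev scale.

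The first bound I would obtain by applying Proposition \ref{prop:dec-sob} directly to $\vf = w - 1$, with $\kappa = \rho'$ and $\eta = \bar\rho - \rho' > 0$: the hypothesis $s \in \,]-d/2 + \rho', d/2[$ is exactly the range required by that proposition.

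For the second bound, I would decompose $P(z) - P_0(z) = -\divg[(G-\Id)\nabla] - z^2 (w-1)$. For the divergence piece, I would factor it as the composition of $\partial_k \colon H_z^{s+1} \to H_z^s$, multiplication by $(G-\Id)_{jk}\colon H_z^s \to H_z^{s-\rho'}$, and $\partial_j \colon H_z^{s-\rho'} \to H_z^{s-1-\rho'}$. The two derivative steps each contribute a factor $\abs z$, and the middle step contributes $\abs z^{\rho'} \nr{G-\Id}_{\Sc^{-\bar\rho}}$ by the first bound (applied to $G-\Id$ in place of $w-1$), totalling $\abs z^{2+\rho'}\nr{G-\Id}_{\Sc^{-\bar\rho}}$. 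For the $w$-piece I would apply the first bound with $\rho$ in place of $\rho'$ to obtain $(w-1)\in \Lc(H_z^{s+1}, H_z^{s+1-\rho})$ with norm $C\abs z^{\rho}\nr{w-1}_{\Sc^{-\bar\rho}}$, then use the embedding $H_z^{s+1-\rho} \hookrightarrow H_z^{s-1-\rho'}$ (valid since $\rho - \rho' \leq \rho_0 \leq 1$) and multiply by $z^2$.

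For the third bound, write $P(z) = P_0(z) + (P(z)-P_0(z))$, handle the perturbation via the second bound at $\rho' = 0$ (absorbing $\abs z^{\rho} \leq 1$), and for $P_0(z) = -\Delta - z^2$ compute directly on the Fourier side: with $r = \abs z$,
\[
\nr{P_0(z)}_{\Lc(H_z^{s+1}, H_z^{s-1})} = \sup_{\xi \in \R^d} \frac{r^2\,\big|\abs\xi^2 - z^2\big|}{r^2 + \abs\xi^2} \leq 1,
\]
the final inequality coming from $r^2(\abs\xi^2 + r^2) = r^2\abs\xi^2 + r^4 \leq \abs\xi^2 + r^2$, since $r \leq 1$. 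The main technical subtlety I anticipate lies in the strict upper constraint $s+1 < d/2$ imposed by Proposition \ref{prop:dec-sob} when applying the first bound with input space $H_z^{s+1}$: if $s$ is close to $d/2$, I would first dominate by an intermediate rescaled norm $H_z^{d/2-\epsilon}$ (using the embedding with some small $\epsilon > 0$ depending on $d/2 - s$ and $\rho - \rho'$), apply the first bound there, and then embed the output back into $H_z^{s-1-\rho'}$; this last step is feasible precisely because $1 + \rho' - \rho > 0$.
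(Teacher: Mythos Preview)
Your proof is correct and follows the same route as the paper: Proposition~\ref{prop:dec-sob} for the first estimate, the factorization $D_j(G_{j,k}-\d_{j,k})D_k$ for the divergence part of the second, and the case $\rho'=0$ together with $\nr{P_0(z)}_{\Lc(H_z^{s+1},H_z^{s-1})}\leq 1$ for the third. You are in fact more explicit than the paper in two places: you spell out the treatment of the $z^2(w-1)$ contribution (which the paper leaves implicit), and you correctly flag and resolve the constraint $s+1<\frac d2$ needed to invoke Proposition~\ref{prop:dec-sob} on $H_z^{s+1}$, via an intermediate embedding through $H_z^{d/2-\e}$; the key inequality $1+\rho'-\rho>0$ that makes the final embedding land in $H_z^{s-1-\rho'}$ is exactly the right observation.
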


\begin{proof}
The first estimate directly follows from Proposition \ref{prop:dec-sob} applied with $\kappa = \rho'$ and $\eta = \bar \rho - \rho' > 0$. Then for $j,k \in \Ii 1 d$ we have
\begin{align*}
\nr{D_j (G_{j,k} - \d_{j,k}) D_k}_{\Lc(H_z^{s+1},H_z^{s-1-\rho'})}
& \leq \abs z^2 \nr{(G_{j,k} - \d_{j,k})}_{\Lc(H_z^{s},H_z^{s-\rho'})}\\
& \lesssim \abs z^{2+\rho'} \nr{(G_{j,k} - \d_{j,k})}_{\Sc^{-\bar \rho}},
\end{align*}
which gives the estimate on $P(z) - P_0(z)$. With $\rho' = 0$ this gives the last property since $\nr{P_0(z)}_{\Lc(H_z^{s+1},H_z^{s-1})} = 1$.
\end{proof}

In Proposition \ref{prop:reg-Ra} below, we will apply Proposition \ref{prop:Pa-Po} with $\rho' = 0$ because we can only pay two derivatives. Because of this, the difference between $P(z)$ and $P_0(z)$ is not small even for $z$ close to 0, unless $\nr{G-\Id}_{\Sc^{-\bar \rho}}$ is. Since we have not assumed that this is the case, we will write the perturbation $G-\Id$ as a sum of a small perturbation and a compactly supported contribution which will be handled differently.

\begin{lemma} \label{lem:chi-0}
Let $\gamma > 0$. 
We can write $G = G_0 + G_\infty$ where $G_0 \in C_0^\infty$ and 
$
\nr{G_\infty - \Id}_{\Sc^{-\bar \rho}} \leq \gamma.
$
\end{lemma}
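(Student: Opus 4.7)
The statement is a routine cutoff argument exploiting the fact that we only require the rate $\bar\rho < \rho_0$. The plan is to split $G - \mathrm{Id}$ into a compactly supported piece (which will become $G_0$) plus a tail supported at infinity (which, added to $\mathrm{Id}$, will become $G_\infty$); since the tail inherits the full decay rate $\rho_0$ while we only measure it in the weaker norm $\Sc^{-\bar\rho}$, we gain a factor $R^{\bar\rho - \rho_0}$ that can be made arbitrarily small by choosing the cutoff radius $R$ large enough.

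Concretely, I would fix $\chi \in C_c^\infty(\R^d,[0,1])$ equal to $1$ on $B(1)$ and supported in $B(2)$, set $\chi_R(x) = \chi(x/R)$ for $R \geq 1$, and then define
\[
G_0 = \chi_R (G - \mathrm{Id}), \qquad G_\infty = \mathrm{Id} + (1-\chi_R)(G - \mathrm{Id}).
\]
Then $G_0 \in C_0^\infty$ by construction and $G = G_0 + G_\infty$, so it only remains to control $\nr{G_\infty - \mathrm{Id}}_{\Sc^{-\bar\rho}} = \nr{(1-\chi_R)(G-\mathrm{Id})}_{\Sc^{-\bar\rho}}$.

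For the norm estimate I would apply the Leibniz rule: for any multi-index $\alpha$ with $\abs\alpha \leq d_0$,
\[
\partial^\alpha\big[(1-\chi_R)(G-\mathrm{Id})\big] = \sum_{\beta \leq \alpha} \binom{\alpha}{\beta} \partial^\beta (1-\chi_R)\, \partial^{\alpha-\beta}(G-\mathrm{Id}).
\]
Each derivative of $1-\chi_R$ satisfies $\abs{\partial^\beta(1-\chi_R)(x)} \lesssim R^{-\abs\beta} \1_{\abs x \geq R}$, and on $\{\abs x \geq R\}$ we have $R^{-\abs\beta} \lesssim \pppg x^{-\abs\beta}$, while the hypothesis \eqref{hyp-swa} gives $\abs{\partial^{\alpha-\beta}(G-\mathrm{Id})(x)} \lesssim \pppg x^{-\rho_0 - \abs\alpha + \abs \beta}$. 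Combining these and multiplying by $\pppg x^{\bar\rho + \abs\alpha}$ yields, on the support of $1-\chi_R$,
\[
\pppg x^{\bar\rho+\abs\alpha}\big| \partial^\alpha[(1-\chi_R)(G-\mathrm{Id})](x)\big| \lesssim \pppg x^{\bar\rho - \rho_0} \lesssim R^{\bar\rho-\rho_0}.
\]
Since $\bar\rho < \rho_0$, the right-hand side tends to $0$ as $R \to \infty$; choosing $R$ large enough that this bound is $\leq \gamma$ (uniformly in $\abs\alpha \leq d_0$) yields the claim. No step is a genuine obstacle; the only mildly delicate point is keeping track of the exponents to confirm that both the factor from $\partial^\beta \chi_R$ and the decay of $\partial^{\alpha-\beta}(G-\mathrm{Id})$ combine to give exactly $\pppg x^{\bar\rho-\rho_0}$ independently of $\beta$.
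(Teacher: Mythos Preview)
Your proof is correct and follows essentially the same approach as the paper: both introduce a cutoff $\phi_\e(x)=\phi(\e x)$ (your $\chi_R$ with $R=1/\e$), set $G_0=(G-\Id)\phi_\e$ and $G_\infty=\Id+(G-\Id)(1-\phi_\e)$, and use the gap $\rho_0-\bar\rho>0$ to make the tail small. You in fact give the Leibniz-rule details that the paper omits; the only imprecision is that the inequality $R^{-|\beta|}\lesssim\pppg{x}^{-|\beta|}$ you invoke is valid on the support of $\partial^\beta(1-\chi_R)$ (namely $\{R\le|x|\le 2R\}$) rather than on all of $\{|x|\ge R\}$, but this is exactly where you need it.
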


\begin{proof}
Let $\phi \in C_0^\infty$ be equal to 1 on a neighborhood of 0. For $\e > 0$ and $x \in \R^d$ we set $\phi_\e (x) = \phi(\e x)$. Then $(G-\Id)\phi_\e $ is always compactly supported, and on the other hand, $\nr{(G-\Id)(1-\phi_\e)}_{\Sc^{-\bar \rho}} \lesssim \e^{\rho_0-\bar \rho}$. We conclude by choosing $\e$ small enough and by setting $G_0 =  (G - \Id)\phi_\e$ and $G_\infty = \Id +  (G-\Id)(1-\phi_\e)$.
\end{proof}

\subsection{Commutators} \label{sec:preliminary-results-commutators}

All along the proofs of the following two sections we are going to use commutators of the different operators involved with the operators of multiplication by the variables $x_j$ and the generator of dilations localized at infinity $A_z$.\\

Let $T$ be a linear map on the Schwartz space $\Sc$. For $r \in ]0,1]$ and $j \in \Ii 1 d$ we set $\ad_{rx_j}(T) = T r x_j - r x_j T : \Sc \to \Sc$. For $z \in \DD_+$ we set $\ad_{j,z} = \ad_{\abs z x_j}$. Then for $\mu = (\mu_1,\dots,\mu_d) \in \N^d$ we set (notice that $\ad_{rx_j}$ and $\ad_{rx_k}$ commute for $j,k \in \Ii 1 d$) 
\[
\ad_{rx}^\mu = \ad_{r x_1}^{\mu_1} \circ \dots \circ \ad_{r x_d}^{\mu_d}.
\]

We fix $\chi \in C_0^\infty$ equal to 1 on a neighborhood of 0 and we define $A_\chi$ by \eqref{def-A-chi} and then $A_z$ by \eqref{def-Az}.\\

We set $\ad_{0,z}(T) = \ad_{A_z} (T) = TA_z - A_z T : \Sc \to \Sc$. Finally, for $N \in \N$ we set $\Ic_N = \bigcup_{k=0}^N \Ii 0 d^k$, and for $J = (j_1,\dots,j_k) \in \Ic_N$ (with $k \in \Ii 0 N$ and $j_1,\dots,j_k \in \Ii 0 d$) we set
\[
\ad_z^J(T) = \big(\ad_{j_1,z} \circ \dots \circ \ad_{j_k,z} \big) (T).
\]
And if for some $s_1,s_2 \in \R$ the operator $\ad_z^J(T)$ defines a bounded operator from $H_z^{s_1}$ to $H_z^{s_2}$ for all $J \in \Ic_N$, then we set 
\[
\nr{T}_{\Cc^N_z(H_z^{s_1},H_z^{s_2})} = \sum_{J \in \Ic_N} \nr{\ad_z^J(T)}_{\Lc(H_z^{s_1},H_z^{s_2})}.
\]
We write $\nr{T}_{\Cc^N_z(H_z^{s})}$ for $\nr{T}_{\Cc^N_z(H_z^{s},H_z^{s})}$. Notice that for $T_1,T_2 : \Sc \to \Sc$ we have 
\begin{equation} \label{eq:comm-T1T2}
\nr{T_2 T_1}_{\Cc^N_z(H_z^{s_1},H_z^{s_3})} \leq \nr{T_1}_{\Cc^N_z(H_z^{s_1},H_z^{s_2})}\nr{T_2}_{\Cc^N_z(H_z^{s_2},H_z^{s_3})}.
\end{equation}

Note that we can rewrite $A_\chi$ as 
\[
A_\chi = (1-\chi) A_0  + \frac {i x \cdot \nabla \chi}2 =   - \frac {id} 2 (1-\chi) - (1-\chi) x \cdot i\nabla + \frac {i x \cdot \nabla \chi}2.
\]
Then the commutators of $A_\chi$ with derivatives and multiplication operators are given by
\begin{equation} \label{eq:comm-Achi-V}
[V,A_\chi] = i (1-\chi) x \cdot \nabla V,
\end{equation}
and
\begin{equation} \label{eq:comm-Achi-der}
[\partial_j , A_\chi] = -i (1-\chi) \partial_j + i  (\partial_j \chi) (x\cdot \nabla) + \frac {id}{2} (\partial_j \chi) + \frac i 2 \big(\partial_j(x\cdot \nabla \chi) \big).
\end{equation}
By induction on $k \in \N$ we get in particular
\begin{equation} \label{eq:comm-Ak-xl}
A_\chi^k x_j = x_j \big( A_\chi - i(1-\chi) \big)^k
\end{equation}

\begin{lemma} \label{lem:commutators}
Let $N \in \N$ and $s \in \R$. Let $\rho' \in [0,\rho]$. There exists $C > 0$ such that the following assertions hold for all $z \in \DD_+$.
\begin{enumerate}[\rm(i)]
\item If $s \in \big]-\frac d 2 , \frac d 2 \big[$, we have $\nr{G}_{\Cc^N_z(H_z^s)} \leq C$ and $\nr{w}_{\Cc^N_z(H_z^s)} \leq C$.
\item If $s \in \big]-\frac d 2 + \rho', \frac d 2 \big[$ then $\nr{G-\Id}_{\Cc^N_z(H_z^s,H_z^{s-\rho'})} \leq C \abs z^{\rho'}$ and $\nr{w-1}_{\Cc^N_z(H_z^s,H_z^{s-\rho'})} \leq C \abs z^{\rho'}$. 
\item For $j \in \Ii 1 d$ we have $\nr{\partial_j}_{\Cc^N_z(H_z^s,H_z^{s-1})} \leq  C \abs z$ and $\nr{\partial_j}_{\Cc^N_z(H_z^{s+1},H_z^{s})} \leq  C \abs z$.
\end{enumerate}
\end{lemma}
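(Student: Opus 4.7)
The plan is to exploit two systematic observations. First, since $G$, $w$ and $\Id$ are multiplication operators, $\ad_{|z|x_j}$ annihilates them, so only $A_z$-commutators can produce non-zero iterates $\ad^J_z(G)$, $\ad^J_z(w)$, or $\ad^J_z(G-\Id)$. Second, the rescaling $O_z$ from \eqref{def-dilation} is a unitary $H^s \to H_z^s$ satisfying $O_z^{-1}|z|x_j O_z = x_j$, $O_z^{-1}\partial_j O_z = |z|\partial_j$, $O_z^{-1}\chi_z O_z = \chi$ and $O_z^{-1} A_z O_z = A_\chi$ (this last identity is where the specific choice $A_z=A_{\chi_z}$ pays off), so that $\nr{T}_{\Lc(H_z^{s_1},H_z^{s_2})} = \nr{O_z^{-1}TO_z}_{\Lc(H^{s_1},H^{s_2})}$ for every operator $T$.

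For parts (i) and (ii), the case $J = \emptyset$ is handled directly by Proposition \ref{prop:dec-sob}: with $\kappa=0$ it gives uniform $H_z^s$-boundedness of multiplication by $G-\Id \in \Sc^{-\rho_0}$ (hence by $G$), while with $\kappa=\rho'$ and $\eta = \bar\rho - \rho' > 0$ it provides the $|z|^{\rho'}$-gain from $H_z^s$ into $H_z^{s-\rho'}$. For $|J|\geq 1$ only pure $A_z$-strings $J=(0,\dots,0)$ contribute, and by \eqref{eq:comm-Achi-V} they satisfy $\ad^k_{A_z}(G) = M_{V_k^z}$ with $V_0^z = G-\Id$ and $V_{k+1}^z = i(1-\chi_z)(x\cdot\nabla V_k^z)$. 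I will show by induction that $V_k^z \in \Sc^{-\rho_0}$ uniformly in $z \in \DD_+$. The core step is that $V \in \Sc^{-\rho_0}$ uniformly forces $(1-\chi_z)(x\cdot\nabla V) \in \Sc^{-\rho_0}$ uniformly: the term where no derivative hits $\chi_z$ is controlled by the inclusion $x\cdot\nabla \Sc^{-\rho_0} \subset \Sc^{-\rho_0}$ together with $|1-\chi_z|\leq 1$; when a derivative $\partial^\beta$ with $|\beta|\geq 1$ hits $\chi_z$, the resulting factor of size $|z|^{|\beta|}$ is supported in the annulus $|x|\sim 1/|z|$, on which the weight $\pppg{x}^{|\alpha|+\rho_0} \sim |z|^{-|\alpha|-\rho_0}$ in the seminorm \eqref{def-nr-nu-N} exactly compensates the losses. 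Applying Proposition \ref{prop:dec-sob} once more (with the appropriate $\kappa$) to $M_{V_k^z}$ closes both (i) and (ii); the argument for $w$ is identical.

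For part (iii), the conjugation conjugates $\ad^J_z(\partial_j)$ to $|z|\, T_J$, where $T_J$ is the $z$-independent iterate $\ad_{j_1}\circ\cdots\circ\ad_{j_k}(\partial_j)$ built from $\ad_{x_l}$ and $\ad_{A_\chi}$. Using \eqref{eq:comm-Achi-der} and $[x_l,\partial_m]=-\delta_{l,m}$, an induction shows that $T_J$ is a finite sum of operators of the form $a(x)\partial^\alpha$ with $|\alpha|\leq 1$ and coefficients $a$ either bounded and smooth (like $1-\chi$) or compactly supported (like derivatives of $\chi$, or polynomials multiplying such), since each $\ad_{A_\chi}$ preserves the differential order and each $\ad_{x_l}$ strictly lowers it. In particular $T_J$ is bounded $H^s \to H^{s-1}$ by a $z$-independent constant, which yields $\nr{\ad^J_z(\partial_j)}_{\Lc(H_z^s,H_z^{s-1})}\leq C|z|$; the shifted statement $H_z^{s+1}\to H_z^s$ is identical. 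The main obstacle is the $|z|$-uniform $\Sc^{-\rho_0}$ control in (i)-(ii), which boils down to the support-compensation described above; once that is in hand the rest is a transparent combination of the scaling identity and Proposition \ref{prop:dec-sob}.
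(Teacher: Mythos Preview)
Your proof is correct and follows essentially the same route as the paper. Both arguments observe that for the multiplication operators only pure $A_z$-iterates survive, reduce via \eqref{eq:comm-Achi-V} to controlling the functions $\big((1-\chi_z)(x\cdot\nabla)\big)^m(G-\Id)$ in a symbol class uniformly in $z$, and then invoke Proposition~\ref{prop:dec-sob}; the paper writes this control tersely as $\lesssim |z|^{\rho'}\sum_m\|(x\cdot\nabla)^m(G-\Id)\|_{\Sc^{-\bar\rho}}$, whereas you spell out the support-compensation that makes the $(1-\chi_z)$ factors harmless in $\Sc^{-\rho_0}$. For part (iii) the paper computes the structural formula $\ad_{iA_z}^m(\partial_j)=(1-\chi_z)^m\partial_j+b_{j,m}(|z|x)\cdot\nabla+|z|c_{j,m}(|z|x)$ directly and then argues boundedness on $H_z^s$, while your conjugation $O_z^{-1}A_zO_z=A_\chi$ pulls the whole computation back to the fixed operator $A_\chi$ and extracts the factor $|z|$ cleanly from $O_z^{-1}\partial_jO_z=|z|\partial_j$; this is the same computation seen from the rescaled side, and your observation that $\ad_{A_\chi}$ preserves the order while $\ad_{x_l}$ strictly lowers it is exactly what underlies the paper's structural formula.
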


\begin{proof}
For $G-\Id$ we observe that, by \eqref{eq:comm-Achi-V} and Proposition \ref{prop:dec-sob},
\begin{align*}
\nr{G-\Id}_{\Cc^N_z(H_z^s,H_z^{s-\rho'})}
& \lesssim \sum_{m=0}^N \nr{\big( (1-\chi_z)(x\cdot \nabla) \big)^m (G-\Id)}_{\Lc(H_z^s,H_z^{s-\rho'})}\\
& \lesssim \abs z^{\rho'} \sum_{m=0}^N \nr{(x\cdot \nabla)^m (G-\Id)}_{\Sc^{-\bar \rho}}.
\end{align*}
This gives the estimate on $(G-\Id)$. The estimates on $(w-1)$, $G$ and $w$ are similar.

With \eqref{eq:comm-Achi-der} applied with $\chi_z$ (and \eqref{eq:comm-Achi-V}) we can check by induction on $m \in \N$ that for $z \in \DD_+$ we have 
\[
\ad_{iA_z}^m(\partial_j) = (1-\chi_z)^m \partial_j + b_{j,m}(\abs z x) \cdot \nabla + \abs z c_{j,m}(\abs z x),
\]
where $b_{j,m} : \R^d \to \C^d$ and $c_{j,m} : \R^d \to \C$ are smooth and compactly supported. Then multiplications by $(1-\chi_z)^m$, $b_{j,m}(\abs z x)$ and $c_{j,m}(\abs z x)$ define bounded operators on $H_z^s$ uniformly in $z \in \DD_+$ for any $s \in \R$. This is clear for $s \in \N$ and the general case follows by interpolation and duality. This gives the last statement.
\end{proof}

With Lemma \ref{lem:commutators} and \eqref{eq:comm-T1T2} we deduce the following result.

\begin{proposition} \label{prop:Pa-Po-comm}
Let $s \in \big] - \frac d 2 , \frac d 2 \big[$, $N \in \N$ and $\rho' \in [0,\rho]$. There exists $C > 0$ such that for $z \in \DD_+$ we have 
\[
\nr{P(z)}_{\Cc^N_z(H_z^{s+1},H_z^{s-1})} \leq C \abs z^{2}
\]
Moreover, if $s \in \big] - \frac d 2 + \rho' , \frac d 2 \big[$ then for $\s \in \{0,1\}$ we also have
\[
\nr{\th_\s(z)}_{\Cc^N_z(H_z^{s+1},H_z^{s-1-\rho'})} \leq C \abs z^{\rho'}.
\]
\end{proposition}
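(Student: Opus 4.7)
The plan is to reduce each inequality to Lemma \ref{lem:commutators} by iterating the product rule \eqref{eq:comm-T1T2}, following the pattern of the non-commutator estimates in Proposition \ref{prop:Pa-Po}. The point is that $N$ iterated commutators acting on a product redistribute themselves among the factors via the Leibniz rule, and \eqref{eq:comm-T1T2} formalises this at the level of $\Cc^N_z$-norms.

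For the first bound I expand $P(z) = -\sum_{j,k}\partial_j G_{j,k}\partial_k - z^2 w$. Applying \eqref{eq:comm-T1T2} twice to each divergence term gives
\[
\nr{\partial_j G_{j,k}\partial_k}_{\Cc^N_z(H_z^{s+1},H_z^{s-1})} \leq \nr{\partial_k}_{\Cc^N_z(H_z^{s+1},H_z^s)} \nr{G_{j,k}}_{\Cc^N_z(H_z^s)} \nr{\partial_j}_{\Cc^N_z(H_z^s,H_z^{s-1})},
\]
which Lemma \ref{lem:commutators}(i) and (iii) bound by $C\abs z\cdot C\cdot C\abs z = C\abs z^2$ since $s \in \,]-\frac d2,\frac d2[$. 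For $z^2 w$, the scalar $z^2$ commutes with every $\ad_{j,z}$ and $\ad_{0,z}$, so we are reduced to $\abs z^2 \nr{w}_{\Cc^N_z(H_z^{s+1},H_z^{s-1})}$; chaining $w : H_z^s \to H_z^s$ (Lemma \ref{lem:commutators}(i)) with the identities $H_z^{s+1} \hookrightarrow H_z^s$ and $H_z^s \hookrightarrow H_z^{s-1}$ (of $\Cc^N_z$-norm $\leq 1$ since the identity commutes with $x_j$ and $A_z$) yields the bound $C\abs z^2$.

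For the second bound I decompose $P(z)-P_0(z) = \sum_{j,k}\partial_j(\delta_{j,k}-G_{j,k})\partial_k - z^2(w-1)$. Each divergence term is handled exactly as above but now with Lemma \ref{lem:commutators}(ii) in the middle:
\[
\nr{\partial_j(\delta_{j,k}-G_{j,k})\partial_k}_{\Cc^N_z(H_z^{s+1},H_z^{s-1-\rho'})} \leq C\abs z \cdot C\abs z^{\rho'} \cdot C\abs z = C\abs z^{2+\rho'},
\]
which is where the hypothesis $s \in \,]-\frac d2+\rho',\frac d2[$ is needed. The term $z^2(w-1)$ contributes $\abs z^2\nr{w-1}_{\Cc^N_z(H_z^{s+1},H_z^{s-1-\rho'})}$, and Lemma \ref{lem:commutators}(ii) combined with the trivial inclusions on both sides controls the $\Cc^N_z$-norm of $w-1$ by $C\abs z^{\rho'}$. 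Summing and dividing by $\abs z^2$ yields $\nr{\th_1(z)}_{\Cc^N_z(H_z^{s+1},H_z^{s-1-\rho'})} \leq C\abs z^{\rho'}$; the case $\sigma=0$ is exactly this last estimate without the $z^2$ prefactor.

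There is no genuine obstacle: the proof is routine Leibniz-rule bookkeeping once Lemma \ref{lem:commutators} is in hand. The only point that deserves attention is to check that the intermediate Sobolev indices chosen between the derivative factors ($s$ in the first estimate, $s$ and $s-\rho'$ in the second) fall in the open ranges prescribed by parts (i) and (ii) of Lemma \ref{lem:commutators}, which correspond exactly to the hypotheses on $s$ stated in the proposition.
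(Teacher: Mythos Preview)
Your proof is correct and follows exactly the approach indicated in the paper, which simply states that the result follows from Lemma \ref{lem:commutators} and \eqref{eq:comm-T1T2}. You have merely spelled out the routine bookkeeping that the paper leaves implicit, and your choice of intermediate Sobolev indices is precisely the right one.
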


Finally, it is known that the commutators method that we will use to prove Theorem \ref{th:mourre-Schro} is based on the positivity of the commutator between the real part of the operator under study and the conjugate operator (see (H\ref{hyp-M-comm-pos}) in Definition \ref{def-conjugate-operator} below). In Section \ref{sec:Mourre} we will use the following result. For $z \in \DD_+$ we set 
\begin{equation} \label{def:PR}
\PR(z) = -\D_G - w \Re(z^2)
\end{equation}
and 
\begin{equation} \label{def:Kz}
K(z) = [\PR(z),iA_z] - 2 (1-\chi_z) \big(\PR(z) + \Re(z^2) \big).
\end{equation}

\begin{proposition} 
\begin{enumerate}[(i)]
\item There exists $C > 0$ such that the commutator $[\PR(z),A_z]$ extends to a bounded operator from $H^1_z$ to $H\inv_z$ and $\nr{[\PR(z),A_z]}_{\Lc(H^1_z,H\inv_z)} \leq C \abs z^2$.
\item There exists $C > 0$ such that for $z \in \DD_+$ we have
\begin{equation*} 
\nr{\pppg{zx}^{\frac \rho 2} K(z) \pppg{zx}^{\frac \rho 2}}_{\Lc(H_z^1,H_z\inv)} \leq C \abs z^2.
\end{equation*}
\end{enumerate}
\label{prop:Kz}
\end{proposition}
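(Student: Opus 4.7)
The strategy is to expand $[\PR(z), iA_z]$ and $K(z)$ as explicit differential operators using the commutator formulas \eqref{eq:comm-Achi-V} and \eqref{eq:comm-Achi-der} with $\chi$ replaced by $\chi_z$, and then to bound each resulting term using Lemma \ref{lem:commutators} together with the pointwise bounds on the coefficients provided by \eqref{hyp-swa}.

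For (i), write $[\PR(z), iA_z] = [-\D_G, iA_z] - \Re(z^2)[w, iA_z]$. Expanding $-\D_G = -\sum_{j,k}\partial_j G_{j,k}\partial_k$, each summand in $[-\D_G, iA_z]$ is a differential operator of order at most two whose coefficients are products of $G_{j,k}$ (or an $(x\cdot\nabla)$-iterate thereof) with bounded functions built from $\chi_z$, $(\partial_j\chi_z)x_k$, and similar expressions. These coefficients act as uniformly bounded multipliers on $H_z^s$ for $|s| < d/2$ by Lemma \ref{lem:commutators}(i), while the two factors $\partial_\cdot$ contribute $\abs z^2$ via Lemma \ref{lem:commutators}(iii) when passing from $H_z^1$ to $H_z^{-1}$. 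Hence $[-\D_G, iA_z]$ is $O(\abs z^2)$ in $\Lc(H_z^1, H_z^{-1})$. For the second piece, \eqref{eq:comm-Achi-V} gives $[w, iA_z] = -(1-\chi_z)(x\cdot\nabla w)$, a bounded multiplier since $x\cdot\nabla w = O(\pppg{x}^{-\rho_0})$, and the $\abs z^2$ factor comes from $\Re(z^2)$.

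For (ii), we organize $K(z)$ into two groups. The low-order group
\begin{equation*}
-\Re(z^2)[w, iA_z] + 2(1-\chi_z)(w-1)\Re(z^2) = \Re(z^2)\bigl[(1-\chi_z)(x\cdot\nabla w) + 2(1-\chi_z)(w-1)\bigr]
\end{equation*}
equals $\Re(z^2) = O(\abs z^2)$ times a bounded function whose modulus decays like $\pppg{x}^{-\rho_0}$. Since $\rho < \rho_0$, the sandwich $\pppg{zx}^{\rho/2}(\cdot)\pppg{zx}^{\rho/2}$ is a bounded function on $\R^d$ uniformly in $z$ (split into $\{|zx| \leq 1\}$ and $\{|zx| \geq 1\}$ and use $\pppg{zx}^\rho \pppg{x}^{-\rho_0} \lesssim \abs z^\rho \pppg{x}^{\rho-\rho_0}$ on the latter), so this group is $O(\abs z^2)$ in $\Lc(L^2) \subset \Lc(H_z^1, H_z^{-1})$. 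The principal group $[-\D_G, iA_z] + 2(1-\chi_z)\D_G$ is split via $-\D_G = -\D - \sum_{j,k}\partial_j(G_{j,k}-\d_{j,k})\partial_k$. The free contribution $[-\D, iA_z] + 2(1-\chi_z)\D$ enjoys an exact cancellation of all $\partial^2$ terms that do not contain $\nabla\chi_z$ (already visible from $[-\D, iA_0] = -2\D$), so every remaining term carries at least one factor of $\nabla\chi_z$, $\nabla^2\chi_z$, or $x\cdot\nabla\chi_z$, hence is $\pppg{zx}^{-N}$-bounded for any $N$; the weights $\pppg{zx}^{\rho/2}$ are harmlessly absorbed, and Lemma \ref{lem:commutators}(iii) furnishes the $\abs z^2$ factor. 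The perturbative contribution $-[\partial(G-\Id)\partial, iA_z] - 2(1-\chi_z)\partial(G-\Id)\partial$ has coefficients of the form $(G_{j,k}-\d_{j,k})$ or $(x\cdot\nabla)^\alpha (G_{j,k}-\d_{j,k})$, each bounded by $\pppg{x}^{-\rho_0}$; the weights are again absorbed because $\rho < \rho_0$, while the two remaining derivatives yield $\abs z^2$.

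The main obstacle is the careful bookkeeping of the many summands produced when fully expanding $[-\D_G, iA_z]$; the key structural point, without which even the unweighted estimate for the principal group would fail, is the cancellation of the $\chi_z$-independent contributions in $[-\D, iA_z] + 2(1-\chi_z)\D$, which is a direct consequence of the identity $[-\D, iA_0] = -2\D$.
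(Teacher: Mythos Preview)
Your proposal is correct and follows essentially the same strategy as the paper: split $K(z)$ into the $w$-part (handled by the pointwise decay of $w-1$ and $x\cdot\nabla w$) and the $\D_G$-part, and show that after the subtraction $-2(1-\chi_z)\D_G$ every surviving term carries either a factor of $\nabla\chi_z$ (hence is supported where $\abs{zx}\lesssim 1$) or a decaying coefficient coming from $G-\Id$, so that the conjugation by $\pppg{zx}^{\rho/2}$ is harmless and the $\abs z^2$ comes from two derivatives (or one derivative plus one $\nabla\chi_z$).

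The only organizational difference is in the bookkeeping of the principal part. You first split $-\D_G=-\D-\partial(G-\Id)\partial$ and treat the free and perturbative pieces separately; the paper instead applies the Leibniz rule to $[\D_G,iA_z]-2(1-\chi_z)\D_G$ directly, using that $[\partial_j,iA_z]-(1-\chi_z)\partial_j$ equals the compactly supported expression $b_{j,1}(\abs z x)\cdot\nabla+\abs z\,c_{j,1}(\abs z x)$ from Lemma~\ref{lem:commutators}, which produces the four-term decomposition \eqref{eq:commutateur-difference}. The paper's route is slightly more economical (the term $-2(1-\chi_z)\partial(G-\Id)\partial$ is absorbed into the general pattern rather than handled separately), while yours makes the role of the identity $[-\D,iA_0]=-2\D$ more explicit. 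Both arrive at the same estimates by the same mechanisms.
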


\begin{proof}
The first statement follows from Lemma \ref{lem:commutators} as Proposition \ref{prop:Pa-Po-comm}. We prove the second property. We have 
\begin{align*}
K(z) = [-\D_G,iA_z] + 2(1-\chi_z)\D_G - \Re(z^2) [w,iA_z] + 2 (1-\chi_z) \Re(z^2) (w-1).
\end{align*}
The contributions of the last two terms are estimated in $\Lc(L^2)$ with \eqref{eq:comm-Achi-V} and the decay of $w-1$ and $x\cdot \nabla w$. For the terms involving $\D_G$ we write
\begin{equation} \label{eq:commutateur-difference}
\begin{aligned}
\, [\D_G,iA_z] - 2(1-\chi_z) \D_G
& = \sum_{1\leq j,k\leq d} \big( [\partial_j ,iA_z] - (1-\chi_z) \partial_j \big) G_{j,k} \partial_k\\
& + \sum_{1\leq j,k\leq d} \partial_j [G_{j,k},iA_z] \partial_k\\
& + \sum_{1\leq j,k\leq d} \partial_j G_{j,k} \big( [\partial_k ,iA_z] - (1-\chi_z) \partial_k \big)\\
& - \sum_{1 \leq j,k\leq d} (\partial_j \chi_z )G_{j,k}  \partial_k.
\end{aligned}
\end{equation}
For $j,k \in \Ii 1 d$ we have  
\[
\big\|\pppg {zx}^{-\frac \rho 2} \partial_k \pppg{zx}^{\frac \rho 2}\big\|_{\Lc(H_z^1,L^2)} = \abs {z} \big\|\pppg {x}^{-\frac \rho 2} \partial_k \pppg{x}^{\frac \rho 2}\big\|_{\Lc(H^1,L^2)} \lesssim \abs z,
\]
so
\begin{multline*}
\big\|\pppg{zx}^{\frac \rho 2} \big(b_{j,1}(\abs z x) \cdot \nabla + \abs z c_{j,1}(\abs z x) \big) G_{j,k} \partial_k \pppg{zx}^{\frac \rho 2}\big\|_{\Lc(H_z^1,H_z\inv)}\\
\lesssim \abs z \big\|\pppg{zx}^{\frac \rho 2} \big(b_{j,1}(\abs z x) \cdot \nabla + \abs z c_{j,1}(\abs z x) \big) \pppg{zx}^{\frac \rho 2}\big\|_{\Lc(L^2,H_z\inv)} \lesssim \abs z^2.
\end{multline*}
This gives the estimate for the contribution of the first term in the right-hand side of \eqref{eq:commutateur-difference}. The third term is estimated similarly. For the second we write 
\begin{equation*}
\big\| \pppg{zx}^{\frac \rho 2} \partial_j [G_{j,k},iA_z] \partial_k \pppg {zx}^{\frac \rho 2} \big\|_{\bar \Lc(H_z^1,H_z\inv)} \lesssim \abs z^2 \big\| \pppg{zx}^{\frac \rho 2} [G_{j,k},iA_z] \pppg {zx}^{\frac \rho 2} \big\|_{\Lc(L^2)} \lesssim \abs z^2,
\end{equation*}
and finally we observe that $\nr{\partial_j \chi_z}_\infty \lesssim \abs z$ to prove that the last term in \eqref{eq:commutateur-difference} is also of size $O(\abs z^2)$ in $\Lc(H^1_z,H\inv_z)$. The proof is complete.
\end{proof}

We finish this paragraph with general considerations about commutators in an abstract setting. Let $\Hc$ be a Hilbert space and let $\Kc$ be a reflexive Banach space densely and continuously embedded in $\Hc$. We identify $\Hc$ with its dual.

We denote by $\bar \Lc(\Kc,\Kc^*)$ the space of semilinear maps from $\Kc$ to its dual $\Kc^*$. We similarly define $\bar \Lc(\Kc^*,\Kc)$. In particular, $\bar \Lc(\Hc,\Hc^*)$ is identified with $\Lc(\Hc)$.\\

We consider a selfadjoint operator $A$ on $\Hc$ with domain $\Dc_\Hc \subset \Hc$ (endowed with the graph norm). Then $A$ can also be seen as an operator $A_\Hc \in \Lc(\Dc_\Hc,\Hc)$. Moreover, for $\f \in \Hc$ we have $\f \in \Dc_\Hc$ if and only if $A_\Hc^* \f \in \Hc$ and in this case $A\f = A_\Hc^* \f$. We set 
\begin{equation} \label{def:Ec}
\Dc_\Kc = \set{\f \in \Kc \cap \Dc_\Hc \st A\f \in \Kc}.
\end{equation}
By restriction, $A$ defines an operator $A_\Kc$ on $\Kc$ with domain $\Dc_\Kc$. Then $\Dc_\Kc$ is endowed with the graph norm of $A_\Kc$. We can see $A_\Kc$ as an operator in $\Lc(\Dc_\Kc,\Kc)$ and $A_\Kc^*$ maps $\Kc^*$ to $\Dc_\Kc^*$. We set 
\[
\Dc_{\Kc^*} = \set{\f \in \Kc^* \st A_\Kc^* \f \in \Kc^*}, \quad \nr{\f}_{\Dc_{\Kc^*}}^2 = \nr{\f}_{\Kc^*}^2 + \nr{A_\Kc^* \f}_{\Kc^*}^2,
\]
and for $\f \in \Dc_{\Kc^*}$ we set $A_{\Kc^*} \f = A_\Kc^* \f$. We have $\Dc_\Kc \subset \Dc_\Hc \subset \Dc_{\Kc^*}$. Moreover, for $\Kc_0 \in \{\Kc,\Hc,\Kc^*\}$ we have 
\[
\Dc_{\Kc_0} = \big\{\f \in \Kc_0 \st A_{\Kc_0^*}^* \f \in \Kc_0 \big\},
\]
and for $\f \in \Dc_{\Kc_0}$ we have $A_{\Kc_0^*}^* \f = A_{\Kc_0}\f$.

    \detail{
    Let $\f \in \Dc_\Hc \subset \Kc^*$. For $\psi \in \Dc_\Kc$ we have 
    \[
    \innp{A_\Kc^* \f}{\psi}_{\Dc_{\Kc}^*,\Dc_{\Kc}} = \innp{\f}{A_\Kc \psi}_{\Kc^*,\Kc} = \innp{\f}{A_\Kc \psi}_{\Hc} = \innp{\f}{A\psi}_\Hc = \innp{A\f}{\psi}_\Hc.
    \]
    This proves that in $\Dc_\Kc^*$ we have $A_\Kc^* \f = A \f$. In particular $A_\Kc^* \f \in \Hc \subset \Kc^*$ so $\f \in \Dc_{\Kc^*}$. Thus we also have $A \f = A_{\Kc^*} \f$.

    Let $\f \in \Dc_\Kc$. For $\psi \in \Dc_{\Kc^*}$ we have 
    \[
    \innp{A_{\Kc^*}^* \f}{\psi}_{\Dc_{\Kc^*}^*,\Dc_{\Kc^*}} = \innp{\f}{A_{\Kc^*} \psi}_{\Kc,\Kc^*} = \innp{\f}{A_\Kc^* \psi}_{\Dc_\Kc,\Dc_\Kc^*} = \innp{A_\Kc \f} {\psi}_{\Kc,\Kc^*}
    \]
    so $A_{\Kc^*}^* \f = A_\Kc \f \in \Kc$. Now let $\f \in \Kc$ such that $A_{\Kc^*}^* \f \in \Kc$. For $\psi \in \Dc_\Hc \subset \Dc_{\Kc^*}$ we have 
    \[
    \innp{A\psi}\f_\Hc = \innp{A_{\Kc^*}\psi}{\f}_{\Kc^*,\Kc}  = \innp{\psi}{A_{\Kc^*}^*\f}_\Hc.
    \]
    This proves that $\f \in \Dom(A^*) = \Dc_\Hc$ and $ A \f = A^* \f = A_{\Kc^*}^* \f \in \Kc$. This proves that $\f \in \Dc_\Kc$.
    }

Let $\Kc_1,\Kc_2 \in \{\Kc,\Hc,\Kc^*\}$. We set $\Cc^0_A(\Kc_1,\Kc_2) = \Lc(\Kc_1,\Kc_2)$ and for $S \in \Lc(\Kc_1,\Kc_2)$ we set $\ad_A^0(S) = S$. Then, by induction on $n \in \N^*$, we say that $S \in \Cc^{n}_A(\Kc_1,\Kc_2)$ if $S \in \Cc^{n-1}_A(\Kc_1,\Kc_2)$ and the commutator $\ad_A^{n-1}(S)  A_{\Kc_1} - A_{\Kc_2^*}^* \ad_A^{n-1}(S) \in \Lc(\Dc_{\Kc_1},\Dc_{\Kc_2^*}^*)$ extends to an operator $\ad_A^n(S)$ in $\Lc(\Kc_1,\Kc_2)$. Then we set 
\[
\nr{S}_{\Cc^n_A(\Kc_1,\Kc_2)} = \sum_{k=0}^n \nr{\ad_A^k (S)}_{\Lc(\Kc_1,\Kc_2)}.
\]
We write $\Cc^n(\Kc_1)$ for $\Cc^n(\Kc_1,\Kc_1)$. We also write $\bar \Cc^n_A(\Kc_1,\Kc_2)$ instead of $\Cc^n_A(\Kc_1,\Kc_2)$ for semi-linear operators.\\

The general properties which will be used in the sequel are the following.

\begin{proposition} \label{prop:comm-A}
Let $\Kc_1,\Kc_2,\Kc_3 \in \{\Kc,\Hc,\Kc^*\}$. 
\begin{enumerate} [\rm(i)]
\item For $S \in \Cc^1_A(\Kc_1,\Kc_2)$ we have $S^* \in \Cc_A^1(\Kc_2^*,\Kc_1^*)$ and $\ad_A(S^*) = -\ad_A(S)^*$.
\item Let $S \in \Cc^1_A(\Kc_1,\Kc_2)$. Then $S$ maps $\Dc_{\Kc_1}$ to $\Dc_{\Kc_2}$ and on $\Dc_{\Kc_1}$ we have
\begin{equation} \label{eq:Ac-S}
A_{\Kc_2} S = S A_{\Kc_1}  - \ad_A(S).
\end{equation}
\item For $S_1 \in \Cc^1_A(\Kc_1,\Kc_2)$ and  $S_2 \in \Cc^1_A(\Kc_2,\Kc_3)$ we have $S_2S_1 \in \Cc^1_A(\Kc_1,\Kc_3)$ and
\begin{equation} \label{eq:comm-A-SS}
\ad_A(S_2S_1) = S_2 \ad_A(S_1) + \ad_A(S_2)S_1.
\end{equation}
\end{enumerate}
\end{proposition}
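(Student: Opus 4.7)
The plan is to treat the three items as formal but careful manipulations of the commutator $\ad_A(S) = SA_{\Kc_1} - A^*_{\Kc_2^*}S$, using repeatedly the characterization $\Dc_{\Kc_0} = \{\f \in \Kc_0 : A^*_{\Kc_0^*}\f \in \Kc_0\}$ established just above the proposition, together with reflexivity of the scale $\Kc \hookrightarrow \Hc \hookrightarrow \Kc^*$.

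For item (ii) I would start, since it will be needed in (iii). Fix $\psi \in \Dc_{\Kc_1}$. By the definition of $\ad_A(S)$ as an element of $\Lc(\Dc_{\Kc_1},\Dc^*_{\Kc_2^*})$, the identity
\begin{equation*}
A^*_{\Kc_2^*} S\psi = S A_{\Kc_1}\psi - \ad_A(S)\psi
\end{equation*}
holds in $\Dc^*_{\Kc_2^*}$. The right-hand side however lies in $\Kc_2$, because $S \in \Lc(\Kc_1,\Kc_2)$ sends $A_{\Kc_1}\psi \in \Kc_1$ into $\Kc_2$, and $\ad_A(S)$ by hypothesis extends to $\Lc(\Kc_1,\Kc_2)$. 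The characterization of $\Dc_{\Kc_2}$ recalled above then forces $S\psi \in \Dc_{\Kc_2}$, with $A_{\Kc_2}S\psi = A^*_{\Kc_2^*} S\psi$, which rewrites exactly as \eqref{eq:Ac-S}.

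For item (iii), armed with (ii), I would simply compute. Given $\psi \in \Dc_{\Kc_1}$, item (ii) applied to $S_1$ gives $S_1\psi \in \Dc_{\Kc_2}$, so that $A^*_{\Kc_2^*}S_1\psi = A_{\Kc_2}S_1\psi$ is a legitimate element of $\Kc_2$. Adding the two evaluations
\begin{equation*}
S_2\ad_A(S_1)\psi = S_2S_1 A_{\Kc_1}\psi - S_2 A^*_{\Kc_2^*}S_1\psi,
\qquad
\ad_A(S_2)S_1\psi = S_2 A^*_{\Kc_2^*}S_1\psi - A^*_{\Kc_3^*}S_2S_1\psi,
\end{equation*}
the middle terms cancel and one recognizes $S_2S_1A_{\Kc_1}\psi - A^*_{\Kc_3^*}(S_2S_1)\psi$, i.e.\ the commutator computed on $\Dc_{\Kc_1}$. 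Since each summand on the left belongs to $\Lc(\Kc_1,\Kc_3)$, the commutator extends to this space, proving $S_2S_1 \in \Cc^1_A(\Kc_1,\Kc_3)$ with the stated Leibniz rule.

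Item (i) is the one demanding the most care, and is where I expect the main obstacle: one must identify a bounded adjoint with an a priori merely formal commutator defined only on $\Dc_{\Kc_2^*}$. The plan is to take the Banach-space adjoint of the (already extended) operator $\ad_A(S) \in \Lc(\Kc_1,\Kc_2)$, obtaining $\ad_A(S)^* \in \Lc(\Kc_2^*,\Kc_1^*)$, and then pair against arbitrary $\psi \in \Dc_{\Kc_1}$ and $\f \in \Dc_{\Kc_2^*}$ to obtain
\begin{equation*}
\innp{\psi}{\ad_A(S)^*\f}_{\Kc_1,\Kc_1^*}
= \innp{A_{\Kc_1}\psi}{S^*\f}_{\Kc_1,\Kc_1^*} - \innp{\psi}{S^*A_{\Kc_2^*}\f}_{\Kc_1,\Kc_1^*},
\end{equation*}
after moving $S$ to $S^*$ and $A^*_{\Kc_2^*}$ back to $A_{\Kc_2^*}$ using $\f \in \Dc_{\Kc_2^*}$. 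The first pairing is by definition $\innp{\psi}{A^*_{\Kc_1}S^*\f}_{\Dc_{\Kc_1},\Dc^*_{\Kc_1}}$, so $\ad_A(S)^*\f = A^*_{\Kc_1}S^*\f - S^*A_{\Kc_2^*}\f$ in $\Dc^*_{\Kc_1}$. Since $\Kc_1$ is reflexive, $A^*_{\Kc_1}$ is precisely the operator appearing in the definition of $\ad_A(S^*)$, so this equality reads $\ad_A(S)^*\f = -\ad_A(S^*)\f$; because the left-hand side takes values in $\Kc_1^*$, the formal commutator $\ad_A(S^*)$ extends to $\Lc(\Kc_2^*,\Kc_1^*)$, the extension being $-\ad_A(S)^*$, which is exactly the required conclusion.
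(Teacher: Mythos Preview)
Your proof is correct and follows essentially the same route as the paper: for (ii) and (iii) your arguments are identical to the paper's, only with slightly different bookkeeping. For (i) the paper simply declares the statement ``clear'' and gives no argument, so your careful duality computation is in fact more complete than the original.
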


\begin{proof}
The first statement is clear. Let $\f \in \Dc_{\Kc_1}$. We have $S\f \in \Kc_2$ and
\[
A_{\Kc_2^*}^* S\f = S A_{\Kc_1} \f - \ad_A(S) \f \in \Kc_2,
\]
so $S\f$ belongs to $\Dc_{\Kc_2}$ and \eqref{eq:Ac-S} follows. Then, applying $S_2$ to \eqref{eq:Ac-S} gives
\[
S_2 S_1 \Ac_{\Kc_1} \f - S_2 \Ac_{\Kc_2} S_1 \f = S_2 \ad_A(S_1) \f.
\]
Since $S_1 \f \in \Dc_{\Kc_2}$ we similarly have $S_2S_1\f\in\Dc_{\Kc_3}$ and
\[
S_2 \Ac_{\Kc_2} S_1 \f - \Ac_{\Kc_3} S_2 S_1 \f = \ad_A(S_2) S_1 \f.
\]
This proves that $S_2 S_1 \in \Cc^1_A(\Kc_1,\Kc_3)$ with $\ad_{A}(S_2S_1)$ given by \eqref{eq:comm-A-SS}.
\end{proof}

We finally recall from \cite{BoucletRoy14} the following result.

\begin{proposition}\label{prop:T-Ac}
Let $N \in \N$.
\begin{enumerate}[\rm(i)]
\item Let $\d \in [-N,N]$. There exists $C > 0$ such that for $S \in \Cc^N_A(\Hc)$ we have 
\[
\nr{\pppg {\A}^\d S \pppg {\A}^{-\d}}_{\Lc(\Hc)} \leq C \nr{S}_{\Cc^N_A(\Hc)}.
\]
\item 
Let $\d_-,\d_+ \geq 0$ such that $\d_- + \d_+ < N$. There exists $C > 0$ such that for $S \in \Cc^N_A(\Hc)$ we have 
\[
\nr{\pppg{\A}^{\d_-} \1_{\R_-}(\A) \, S \,  \1_{\R_+}(A) \pppg{\A}^{\d_+}}_{\Lc(\Hc)} \leq C \nr{S}_{\Cc^N_A(\Hc)}.
\]
\end{enumerate}
\end{proposition}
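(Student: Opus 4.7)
The strategy is to reduce both estimates to the case of non-negative integer exponents and then to exploit the Leibniz-type commutator formula
\[
A^k S = \sum_{j=0}^{k} \binom{k}{j} (-1)^j \, \ad_A^j(S) \, A^{k-j} \qquad (k \in \N),
\]
valid for $S \in \Cc^k_A(\Hc)$ and proved by a short induction starting from $\ad_A(S) = SA - AS$ (using selfadjointness of $A$ and Proposition \ref{prop:comm-A}).

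For part (i), fix $n \in \N$ with $2n \leq N$ and write $\pppg{A}^{2n} = (1+A^2)^n$, which is a polynomial in $A$ of degree $2n$. Substituting the Leibniz identity yields an explicit decomposition of $\pppg{A}^{2n} S \pppg{A}^{-2n}$ as a finite sum of terms $r_1(A) \, \ad_A^j(S) \, r_2(A)$ with $j \leq 2n \leq N$, where $r_1(A), r_2(A)$ are uniformly bounded on $\Hc$ (each is a power of $A$ compensated by sufficiently many factors of $(1+A^2)^{-1}$). Since every $\ad_A^j(S)$ is controlled by $\nr{S}_{\Cc^N_A(\Hc)}$, the desired bound holds for $\delta = 2n$. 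General $\delta \in [0, N]$ is then obtained by the Hadamard three-lines theorem applied to the holomorphic family $z \mapsto \pppg{A}^z S \pppg{A}^{-z}$ in the strip $0 \leq \Re(z) \leq 2\lceil N/2 \rceil$, and negative $\delta$ follows by duality: $\pppg{A}^\delta S \pppg{A}^{-\delta}$ is the adjoint of $\pppg{A}^{-\delta} S^* \pppg{A}^\delta$, and $S^* \in \Cc^N_A(\Hc)$ with the same norm by Proposition \ref{prop:comm-A}(i) iterated.

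For part (ii), the extra input is the spectral identity
\[
\1_{\R_-}(A) \, A^k \, \1_{\R_+}(A) = 0 \qquad \text{for every } k \in \N,
\]
which is immediate from the functional calculus since $\lambda \mapsto \1_{\lambda<0}\, \lambda^k \, \1_{\lambda>0}$ vanishes identically. Fix an integer $n$ with $\delta_- + \delta_+ < n \leq N$. Starting from the observation that $\1_{\R_-}(A)\1_{\R_+}(A) = 0$, one has
\[
\1_{\R_-}(A) \, S \, \1_{\R_+}(A) = \1_{\R_-}(A) \, [S, \1_{\R_+}(A)] \, \1_{\R_+}(A),
\]
and iterating this peel-off $n$ times (combined with the Leibniz formula above) produces a sum of terms carrying $\ad_A^j(S)$ for $1 \leq j \leq n$ flanked by bounded functions of $A$; the polynomial-in-$A$ factors that would otherwise be unbounded are annihilated between $\1_{\R_-}(A)$ and $\1_{\R_+}(A)$ by the spectral identity above. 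Each factor of $\pppg{A}^{\delta_\pm} \, \1_{\R_\pm}(A)$ is absorbed into the bounded symbols of $A$ thanks to the balance $\delta_- + \delta_+ < n$, and the non-integer exponents are recovered from integer ones by Stein complex interpolation in the two weight parameters.

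The main obstacle is the rigorous implementation of the commutator peel-off in (ii), since the sharp cutoffs $\1_{\R_\pm}(A)$ are not smooth functions of $A$ and so the Helffer--Sjöstrand calculus does not apply directly. I would circumvent this by first approximating $\1_{\R_\pm}$ by smooth cutoffs $\f_\pm^\e \in C^\infty(\R)$ (or equivalently by first running the argument in the fully polynomial integer case), carrying out the commutator expansion with bounds uniform in $\e$, and passing to the limit $\e \to 0$ thanks to the dominated convergence theorem in the spectral representation of $A$.
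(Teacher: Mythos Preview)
The paper does not prove this proposition at all: it simply records that part~(i) is \cite[Proposition~5.12]{BoucletRoy14} and that part~(ii) follows from \cite[Proposition~5.13]{BoucletRoy14}. So there is no in-paper argument to compare against; I will comment on your sketch on its own merits.

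\medskip
\noindent\textbf{Part (i).} Your strategy (Leibniz formula for integer exponents, then complex interpolation and duality) is the standard one and works. One small slip: you interpolate on the strip $0 \le \Re z \le 2\lceil N/2\rceil$, but for odd $N$ the right endpoint is $N+1$, which would require $S \in \Cc_A^{N+1}(\Hc)$ --- one commutator more than you have. The fix is painless: run the Leibniz expansion with $(A+i)^N$ (a genuine polynomial of degree $N$) instead of $(1+A^2)^n$. This gives $(A+i)^N S (A+i)^{-N} = \sum_{j=0}^N \binom{N}{j}(-1)^j \ad_A^j(S)(A+i)^{-j}$, which is bounded; since $\pppg{A}^{\pm N}(A+i)^{\mp N}$ are bounded by functional calculus, you get the endpoint $\d=N$ directly and then interpolate on $[0,N]$.

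\medskip
\noindent\textbf{Part (ii).} Here your argument has a real gap, and you correctly flag it yourself. The identity $\1_{\R_-}(A)\, S\, \1_{\R_+}(A) = \1_{\R_-}(A)\,[S,\1_{\R_+}(A)]\,\1_{\R_+}(A)$ is true but does not iterate in any useful way: the commutator $[S,\1_{\R_+}(A)]$ is no more tractable than $S$ itself, since $\1_{\R_+}$ is not smooth and there is no a~priori reason for $[S,\1_{\R_+}(A)]$ to lie in $\Cc_A^{N-1}(\Hc)$. Your claim that ``the polynomial-in-$A$ factors that would otherwise be unbounded are annihilated between $\1_{\R_-}(A)$ and $\1_{\R_+}(A)$'' is exactly the point that needs proof: after one application of the Leibniz formula the polynomial pieces sit on one side of a factor $\ad_A^j(S)$, not sandwiched between the two projections, so the spectral identity $\1_{\R_-}(A)A^k\1_{\R_+}(A)=0$ does not apply to them as written.

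Your proposed repair --- replace $\1_{\R_\pm}$ by smooth cutoffs, use the Helffer--Sj\"ostrand representation to expand the commutator, and pass to the limit --- is a viable route and is essentially what underlies the cited result in \cite{BoucletRoy14}, but it requires genuine bookkeeping (each commutator with the smooth cutoff produces an extra resolvent factor $(A-\z)^{-1}$, and one must track that after $n$ iterations the combined decay $|\Im\z|^{-n}$ times the growth of the weights $\pppg{A}^{\d_\pm}$ is integrable against $\partial_{\bar\z}\tilde\varphi$). As written, your paragraph for (ii) is a plausible outline rather than a proof.
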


\begin{proof}
The first statement is \cite[Proposition 5.12]{BoucletRoy14} and second easily follows from \cite[Proposition 5.13]{BoucletRoy14}.
\end{proof}

\section{Elliptic regularity} \label{sec:elliptic-regularity}

In this section we prove Propositions \ref{prop:regularity-Rz} and \ref{prop:regularity-Rz-A}. The parameter $\rho \in [0,\rho_0[$ is fixed by these statements. We also fix $\bar \rho \in ]\rho,\rho_0[$.

Proposition \ref{prop:regularity-Rz} will be given by \eqref{eq:R-th-R0} while Proposition \ref{prop:regularity-Rz-A} will follow from Proposition \ref{prop:elliptic-Schro}.(ii) and Proposition \ref{prop:xTA}.\\

Let $s \in \R$. For $r \in ]0,1]$ the resolvent $R_0(ir) = r^{-2} (D_r^2+1)\inv$ defines a bounded operator from $H_r^{s-1}$ to $H_r^{s+1}$ with norm $r^{-2}$. More generally, if we set 
\[
\DI = \set{z \in \DD_+ \st  \mathsf{arg}(z) \in \left[\frac \pi 6 , \frac {5\pi}{6} \right]},
\]
then there exists $c_0 > 0$ such that for $s \in \R$ and $z \in \DI$ we have 
\begin{equation} \label{eq:res-Po}
\nr{R_0(z)}_{\Lc(H_z^{s-1},H_z^{s+1})} \leq \frac {c_0} {\abs z^2}.
\end{equation}
Then, for $k \in \N^*$ and $s,s' \in \R$ such that $s' - s \leq 2k$ we have 
\begin{equation} \label{eq:res-Po-multiple}
\big\|R_0^{[k]}(z)\big\|_{\Lc(H_z^{s},H_z^{s'})} = \abs z^{2k} \nr{R_0(z)^k}_{\Lc(H_z^{s},H_z^{s'})} \leq c_0^k.
\end{equation}

Our first purpose is to prove a similar property for $R(z)$. By the usual elliptic regularity this holds for any fixed $z \in \DD_+$, the difficulty is to get uniform estimates for $z$ close to 0.\\

We cannot extend \eqref{eq:res-Po} to $R(z)$ in full generality. We begin with the case $s = 0$.

\begin{proposition} \label{prop:Rari-H1}
There exists $c > 0$ such that for all $z \in \DI$ we have 
\begin{equation*} 
\nr{R(z)}_{\Lc(H_z\inv,H_z^1)} \leq \frac c {\abs z^2}.
\end{equation*}
More generally, for $N \in \N$ there exists $c_N > 0$ such that for $z \in \DI$ we have
\[
\nr{R(z)}_{\Cc^N_z(H_z\inv,H_z^1)} \leq \frac {c_N} {\abs z^2}.
\]
\end{proposition}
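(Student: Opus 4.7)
The plan is to first establish the $N = 0$ estimate $\nr{R(z)}_{\Lc(H_z^{-1}, H_z^1)} \leq c\abs z^{-2}$ through a variational energy argument, then bootstrap to all $N \geq 1$ via the algebraic identity for commutators of an inverse. For the base step, I will take $u$ Schwartz (extending by density in the end), set $f = P(z) u$, and pair in the $L^2$ duality:
\[
\innp f u = \innp{G\nabla u}{\nabla u} - z^2 \innp{wu}{u} =: a - z^2 b,
\]
where $a \geq c\inv \nr{\nabla u}_{L^2}^2$ and $b = \nr{w^{1/2} u}_{L^2}^2 \geq c\inv \nr u_{L^2}^2$ are nonnegative reals, by the uniform positivity assumptions on $G$ and $w$. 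Consequently,
\[
\abs{\innp f u}^2 = (a - \Re(z^2) b)^2 + \Im(z^2)^2 b^2 \geq b^2 \, \mathsf{dist}(z^2, \R_+)^2.
\]
The restriction $\mathsf{arg}(z) \in [\pi/6, 5\pi/6]$ places $z^2$ in the sector $\mathsf{arg}(z^2) \in [\pi/3, 5\pi/3]$, so $\mathsf{dist}(z^2, \R_+) \geq (\sqrt 3 / 2) \abs z^2$. This forces $b \lesssim \abs{\innp f u}/\abs z^2$, and hence $\nr u_{L^2}^2 \lesssim \abs{\innp f u}/\abs z^2$. Feeding this back into $a = \Re\innp f u + \Re(z^2) b$ yields $\nr{\nabla u}_{L^2}^2 \lesssim \abs{\innp f u}$, and combining both bounds gives
\[
\abs z^2 \nr u_{H_z^1}^2 = \abs z^2 \nr u_{L^2}^2 + \nr{\nabla u}_{L^2}^2 \lesssim \abs{\innp f u} \leq \nr f_{H_z^{-1}} \nr u_{H_z^1},
\]
so that $\nr u_{H_z^1} \leq c \abs z^{-2} \nr f_{H_z^{-1}}$.

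For $N \geq 1$, I will use the identity $\ad_{j,z}(R(z)) = -R(z)\, \ad_{j,z}(P(z))\, R(z)$ for $j \in \Ii 0 d$, which comes from differentiating $R(z) P(z) = \Id$ and applying the Leibniz rule. Iterating, for $J \in \Ic_N$,
\[
\ad_z^J(R(z)) = \sum_{J = J_1 \sqcup \cdots \sqcup J_k} (-1)^k R(z) \ad_z^{J_1}(P(z)) R(z) \ad_z^{J_2}(P(z)) R(z) \cdots \ad_z^{J_k}(P(z)) R(z),
\]
the sum running over ordered decompositions of $J$ into nonempty blocks. Proposition \ref{prop:Pa-Po-comm} applied with $s = 0$ and $\rho' = 0$ gives $\nr{\ad_z^{J_i}(P(z))}_{\Lc(H_z^1, H_z^{-1})} \lesssim \abs z^2$, and the base estimate gives $\nr{R(z)}_{\Lc(H_z^{-1}, H_z^1)} \lesssim \abs z^{-2}$. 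Each product in the sum therefore has norm $\lesssim \abs z^{-2(k+1)} \cdot \abs z^{2k} = \abs z^{-2}$ in $\Lc(H_z^{-1}, H_z^1)$, giving exactly the claimed bound.

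The main obstacle is the base estimate when $\Re(z^2) > 0$: in that regime, the real part of $\innp f u$ alone fails to control $\nr{\nabla u}_{L^2}^2$ because the term $-\Re(z^2) \nr{w^{1/2}u}_{L^2}^2$ enters with the wrong sign. The remedy, as above, is to first extract the $L^2$ bound on $u$ from the full distance $\mathsf{dist}(z^2, \R_+) \gtrsim \abs z^2$ (which is available precisely because $z \in \DI$), and only then use it to absorb the indefinite term when recovering the gradient bound.
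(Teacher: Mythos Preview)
Your proof is correct and follows essentially the same route as the paper: a variational estimate for the $N=0$ case, then the algebraic expansion of $\ad_z^J(R(z))$ into products of $R(z)$'s and $\ad_z^{J_i}(P(z))$'s combined with Proposition~\ref{prop:Pa-Po-comm}. The only difference is in the base step: the paper writes $z = \abs z\, e^{i(\pi/2+\vartheta_z)}$ with $\vartheta_z \in [-\pi/3,\pi/3]$ and observes in one line that
\[
\Re\big\langle e^{-i\vartheta_z} P(z) u, u\big\rangle = \cos(\vartheta_z)\big(\innp{G\nabla u}{\nabla u} + \abs z^2 \innp{wu}{u}\big) \gtrsim \abs z^2 \nr{u}_{H_z^1}^2,
\]
so coercivity and Lax--Milgram give existence and the bound simultaneously. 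Your two-step extraction (first $b$ via $\mathsf{dist}(z^2,\R_+)\gtrsim\abs z^2$, then $a$ by feeding $b$ back) reaches the same a~priori inequality but does not by itself supply invertibility of $P(z):H_z^1\to H_z^{-1}$; you rely implicitly on $R(z)$ being already defined, which is fine here but worth stating.
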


\begin{proof}
Let $z \in \DI$ and $\vartheta_z \in \big[-\frac \pi 3,\frac \pi 3\big]$ be such that $\arg(z) = \frac \pi 2 + \vartheta_z$. The operator $e^{-i\vartheta_z} P(z)$ defines an operator in $\Lc(H^1_z,H_z\inv)$ uniformly in $z \in \DI$. Moreover for $u \in H_z^1$ we have 
\begin{align*}
\Re \innp{e^{-i\vartheta_z} P(z) u}{u}_{H\inv_z,H^1_z}
& = \cos(\vartheta_z)  \big(\innp{G(x) \nabla u}{\nabla u}_{L^2} + \abs z^2 \innp{wu}{u}_{L^2} \big)\\
& \gtrsim \abs z^2 \nr{u}_{H^1_z}^2.
\end{align*}
The Lax-Milgram Theorem gives the first estimate.

Now let $N \in \N$. For $J \in \Ic_N$, we can write $\ad_z^J(R(z))$ as a sum of terms of the form 
\[
R(z) \ad_z^{J_1}(P(z)) R(z) \dots \ad_z^{J_k}(P(z)) R(z)
\]
where $k \in \N$ and $J_1,\dots,J_k \in \Ic_N$. The general statement follows from \eqref{eq:comm-T1T2} and Proposition \ref{prop:Pa-Po-comm}.
\end{proof}

On the other hand, we have a result similar to \eqref{eq:res-Po} if $G$ is a small perturbation of the flat metric and $s$ is not too large:

\begin{proposition} \label{prop:reg-Ra}
Let $s \in \big]-\frac d 2,\frac d 2 \big[$. There exist $\gamma > 0$ and $c > 0$ such that if $\nr{G-\Id}_{\Sc^{-\bar \rho}} \leq \gamma$ then for $z \in \DI$ we have
\[
\nr{R(z)}_{\Lc(H_z^{s-1},H_z^{s+1})} \leq \frac c {\abs z^2}.
\]
More generally, for $N \in \N$ there exists $c_N > 0$ such that for $z \in \DI$ we have
\[
\nr{R(z)}_{\Cc^N_z(H_z^{s-1},H_z^{s+1})} \leq \frac {c_N} {\abs z^2}.
\]
\end{proposition}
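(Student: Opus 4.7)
The case $s = 0$ is already given by Proposition \ref{prop:Rari-H1}. For general $s \in \big]-\frac d 2, \frac d 2\big[$, the plan is a two-step perturbative argument routed through the auxiliary operator $\tilde R(z) := (-\D - z^2 w)^{-1}$, which corresponds to $P(z)$ with $G$ replaced by $\Id$ but the weight $w$ retained. Since Proposition \ref{prop:Rari-H1} is valid for any $G$ satisfying the uniform ellipticity bounds, applying it with $G = \Id$ gives $\|\tilde R(z)\|_{\Lc(H_z^{-1},H_z^1)} \lesssim \abs z^{-2}$ uniformly in $z \in \DI$. Once I upgrade this to $\|\tilde R(z)\|_{\Lc(H_z^{s-1},H_z^{s+1})} \lesssim \abs z^{-2}$ for all $s \in \big]-\frac d 2, \frac d 2\big[$, passing to $R(z)$ is a standard Neumann argument: the residual perturbation $V_G := -\divg((G-\Id)\nabla)$ satisfies $\|V_G\|_{\Lc(H_z^{s+1},H_z^{s-1})} \leq C \abs z^2 \|G - \Id\|_{\Sc^{-\bar \rho}} \leq C \abs z^2 \gamma$ by Proposition \ref{prop:Pa-Po} with $\rho' = 0$, so $\|\tilde R(z) V_G\|_{\Lc(H_z^{s+1})} \leq C' \gamma$. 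Choosing $\gamma$ small enough makes $I + \tilde R(z) V_G$ invertible by Neumann series, and $R(z) = (I + \tilde R(z) V_G)^{-1}\tilde R(z)$ inherits the required bound.

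The heart of the argument is therefore the extension of the $\tilde R(z)$-estimate to all $s \in \big]-\frac d 2, \frac d 2\big[$. The plan is to write $-\D - z^2 w = P_0(z) + V_w$ with $V_w := -z^2(w-1)$, so that $\tilde R(z) = (I + R_0(z) V_w)^{-1} R_0(z)$. Fixing some $\rho' \in \big]0, \min(\rho, s + \frac d 2)\big[$, Proposition \ref{prop:Pa-Po} gives $\|R_0(z) V_w\|_{\Lc(H_z^{s+1})} \lesssim \abs z^{\rho'} \|w - 1\|_{\Sc^{-\bar \rho}}$. For $\abs z \leq \eps$ with $\eps$ small (depending on $w$ and $s$ but not on $z$), this is $\leq \frac 12$ and the Neumann series converges. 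For $\abs z \in [\eps, 1]$ with $z \in \DI$, $z^2$ ranges in a compact subset of $\C \setminus \R_+$; rewriting $\tilde R(z) = (-w\inv \D - z^2)\inv w\inv$, where $-w\inv \D$ is selfadjoint on $L^2_w$ with spectrum $\R_+$, gives uniform $L^2$-boundedness of $\tilde R(z)$, which standard elliptic regularity upgrades to $\Lc(H^{s-1}, H^{s+1})$-boundedness. Since the norms of $H_z^s$ and $H^s$ are equivalent whenever $\abs z$ is bounded away from $0$, this transfers to the desired bound in $\Lc(H_z^{s-1}, H_z^{s+1})$.

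The $\Cc^N_z$-version follows from the Leibniz identity $\ad_X(R(z)) = -R(z)\, \ad_X(P(z))\, R(z)$ for $X = \abs z x_j$ or $X = A_z$: iterating it, $\ad_z^J(R(z))$ becomes a finite sum of products alternating $R(z)$ and commutators $\ad_z^{J_i}(P(z))$ with $i = 1, \dots, k \leq \abs J$. Each commutator of $P(z)$ is bounded by $C\abs z^2$ in $\Lc(H_z^{s+1}, H_z^{s-1})$ by Proposition \ref{prop:Pa-Po-comm}, while each $R(z)$ contributes $c/\abs z^2$ in $\Lc(H_z^{s-1}, H_z^{s+1})$ from the non-commutator bound; the factors telescope to yield the stated estimate. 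The main obstacle is the weight $w$: a direct perturbation from $R_0(z)$ to $R(z)$ fails because $-z^2(w - 1)$ is not uniformly small relative to $R_0(z)$ without an assumption on the size of $w - 1$ (which is not available, only $\|G - \Id\|_{\Sc^{-\bar \rho}}$ is). Routing through $\tilde R(z)$, combined with the $\abs z$-dichotomy above, is precisely what sidesteps this difficulty.
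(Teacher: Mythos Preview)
Your proof is correct, but the detour through $\tilde R(z)=(-\Delta-z^2w)^{-1}$ is unnecessary, and your stated reason for it rests on a misconception. The paper perturbs directly from $R_0(z)$ to $R(z)$ in one step: by Proposition~\ref{prop:Pa-Po} with $\rho'=0$ one has
\[
\nr{P(z)-P_0(z)}_{\Lc(H_z^{s+1},H_z^{s-1})}\leq C\big(\abs z^2\nr{G-\Id}_{\Sc^{-\bar\rho}}+\abs z^{2+\rho}\nr{w-1}_{\Sc^{-\bar\rho}}\big),
\]
so the $G$-part is made small by choosing $\gamma$ small, while the $w$-part is small for $\abs z\leq r_0$ thanks to the extra factor $\abs z^{\rho}$ (the decay of $w-1$ is a standing hypothesis, no smallness assumption on $\nr{w-1}$ is needed). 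This is exactly the mechanism you invoke in your first step when passing from $R_0(z)$ to $\tilde R(z)$; it works just as well for the full difference $P(z)-P_0(z)$. For $\abs z\geq r_0$ one uses standard elliptic regularity, as you do. Thus your claim that ``a direct perturbation from $R_0(z)$ to $R(z)$ fails'' is not accurate: the paper carries out precisely that direct perturbation, with the same $\abs z$-dichotomy you use. Your two-step decomposition is a valid reorganization of the same argument but buys nothing extra. A minor technical point: your choice $\rho'\in\,]0,\min(\rho,s+\tfrac d2)[$ is empty when $\rho=0$; the fix is to take $\rho'\in\,]0,\bar\rho[$ instead, since Proposition~\ref{prop:dec-sob} only requires $\rho'<\bar\rho$. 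Your treatment of the $\Cc^N_z$-part matches the paper's.
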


\begin{proof}
Let $c_0 > 0$ be given by \eqref{eq:res-Po}. If $\nr{G-\Id}_{\Sc^{-\bar \rho}}$ is small enough, then by Proposition \ref{prop:Pa-Po} applied with $\rho' = 0$ there exists $r_0 \in ]0,1]$ such that for $z \in \DI$ with $\abs z \leq r_0$ we have 
\[
\nr{P(z) - P_0(z)}_{\Lc(H_z^{s+1},H_z^{s-1})} \leq \frac {\abs z^2} {2c_0}.
\]
Then
\[
\nr{R(z)}_{\Lc(H_z^{s-1},H_z^{s+1})} = \big\| \big( 1 + R_0(z) \big( P(z) - P_0(z) \big) \big)\inv  R_0(z) \big\|_{\Lc(H_z^{s-1},H_z^{s+1})} \leq \frac {2c_0} {\abs z^2}.
\]
For $z \in \DI$ with $\abs z \geq r_0$ we use the standard elliptic estimates, and the first estimate is proved. The second estimate follows as in the proof of Proposition \ref{prop:Rari-H1}.
\end{proof}

The first part of the following result with $z' = i\abs z$ gives Proposition \ref{prop:regularity-Rz}. With $z = z'$ and $s_1 = s_2 = 0$ it also gives Theorem \ref{th:mourre-Schro} for $z \in \DI$ (without any weight). The second part of the result gives Proposition \ref{prop:regularity-Rz-A} with $\pppg{zx}^\d$ instead of $\pppg{A}^\d$.

\begin{proposition} \label{prop:elliptic-Schro}
Let $s_1,s_2,s \in \big[0,\frac d 2\big[$, $\d_1 > s_1$, $\d_2 > s_2$ and $\d > s$. Let $\s \in \{0,1\}$. Let $n_1,n_2,n \in \N^*$.
\begin{enumerate}[(i)]
\item There exists $C > 0$ such that for $z \in \DD_+$ and $z' \in \DI$ with $\abs z = \abs {z'}$ we have
\begin{equation} \label{eq:R}
\nr{\pppg x^{-\d_1}  R^{[n]}(z') \pppg x^{-\d_2}}_{\Lc(L^2)} \leq C \abs {z}^{\min(s_1+s_2,2n)}
\end{equation}
and
\begin{equation} \label{eq:R-th-R0}
\nr{\pppg x^{-\d_1}  R^{[n_1]}(z') \th_\sigma(z) R_0^{[n_2]}(z')  \pppg x^{-\d_2}}_{\Lc(L^2)}
\leq C \abs z^{\min(s_1 + s_2 + \rho, 2n_1 + 2n_2 - 2\sigma)}.
\end{equation}
\item There exists $C > 0$ such that for $z \in \DD_+$ and $r=\abs z$ we have
\begin{eqnarray}
\label{eq:x-R-rx}
\nr{\pppg x^{-\d} R^{[n]}(ir) w \pppg{rx}^\d}_{\Lc(L^2)} &\leq& C r^{\min(s,2n)},\\
\label{eq:x-R-th-R0-rx}
\nr{\pppg x^{-\d} R^{[n_1]}(ir) \th_\s(z) R_0^{[n_2]}(ir) \pppg{rx}^\d}_{\Lc(L^2)} &\leq& C r^{\min(s+\rho,2n_1+2n_2-2\s)},\\
\label{eq:rx-R0-x}
\nr{\pppg{rx}^\d  R_0^{[n]}(z) \pppg x^{-\d}}_{\Lc(L^2)} & \leq & C r^{\min(s,2n)},\\
\label{eq:rx-R-th-R0-x}
\nr{\pppg{rx}^\d w R^{[n_1]}(ir) \th_\s(z) R_0^{[n_2]}(ir) \pppg x^{-\d}}_{\Lc(L^2)} & \leq &  C r^{\min(s+\rho,2n_1+2n_2-2\s)}.
\end{eqnarray}
\end{enumerate}
\end{proposition}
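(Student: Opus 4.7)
The plan is to reduce both parts to uniform boundedness (in $z$) of the relevant operator products between rescaled Sobolev spaces $H_z^s$, and then convert the polynomial weights $\pppg x^{-\d}$ into powers of $\abs z$ via Lemma \ref{lem:weight}.

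For part (i), to prove (\ref{eq:R}) I would choose auxiliary exponents $0 \leq s_j' \leq s_j$ with $s_1' + s_2' = \min(s_1+s_2,2n)$. Lemma \ref{lem:weight} gives $\pppg x^{-\d_2} : L^2 \to H_z^{-s_2'}$ with norm $\lesssim \abs z^{s_2'}$ and $\pppg x^{-\d_1} : H_z^{s_1'} \to L^2$ with norm $\lesssim \abs z^{s_1'}$, reducing the claim to the uniform boundedness of $R^{[n]}(z') : H_z^{-s_2'} \to H_z^{s_1'}$. Since $R^{[n]}(z') = \abs z^{2n}(R(z')w)^{n-1}R(z')$ and the intermediate factors $w$ are controlled by Lemma \ref{lem:commutators}, it amounts to distributing a gain of $s_1'+s_2'$ derivatives among the $n$ resolvents, each of which gains up to two derivatives at cost $\abs z^{-2}$ by Propositions \ref{prop:Rari-H1} and \ref{prop:reg-Ra}. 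Since Proposition \ref{prop:reg-Ra} requires $\nr{G - \Id}_{\Sc^{-\bar\rho}}$ to be small, I would invoke Lemma \ref{lem:chi-0} to split $G = G_0 + G_\infty$ and compare $R(z')$ with its small-perturbation counterpart $R_\infty(z') = (-\D_{G_\infty} - (z')^2 w)\inv$ through
\[
R(z') = R_\infty(z') + R(z')\,\divg(G_0 \nabla)\,R_\infty(z').
\]
The compactly supported correction $\divg(G_0 \nabla) \in \Lc(H_z^{s+1},H_z^{s-1})$ has norm $\lesssim \abs z^2$ by Lemma \ref{lem:commutators}, which exactly compensates the $\abs z^{-2}$ of the adjacent $R(z')$, so that the iteration closes uniformly in $z$. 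For (\ref{eq:R-th-R0}), an additional factor $\th_\s(z)$ is inserted; by Proposition \ref{prop:Pa-Po} it belongs to $\Lc(H_z^{s+1},H_z^{s-1-\rho})$ with norm $\lesssim \abs z^\rho$, and when $\s = 1$ it carries a further $\abs z^{-2}$ factor which produces the $-2\s$ shift in the target exponent.

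For part (ii), I would exploit the dilation $O_r$ of (\ref{def-dilation}) and the conjugation identities
\[
O_r\,\pppg x^{\pm\d}\,O_r^* = \pppg{rx}^{\pm\d}, \qquad O_r^* R(ir) O_r = r^{-2}\bigl(-\D_{G(\cdot/r)} + w(\cdot/r)\bigr)\inv,
\]
together with analogous formulas for $R_0(ir)$, $\th_\s(z)$ and $w$. The rescaled coefficients $G(\cdot/r)$ and $w(\cdot/r)$ still satisfy (\ref{hyp-swa}) with the same constants $C_\a$ for all $r \in \,]0,1]$, since each derivative only picks up positive powers of $r$. After suitable insertions of $O_r O_r^* = \Id$ (which preserve the $\Lc(L^2)$-norm by unitarity of $O_r$) and a dyadic splitting of the growing weight $\pppg{rx}^\d$ according to the regions $\abs x \lesssim 1/r$ and $\abs x \gtrsim 1/r$, each of (\ref{eq:x-R-rx})--(\ref{eq:rx-R-th-R0-x}) reduces to an estimate of the type treated in part (i), applied uniformly to this rescaled family.

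The main obstacle is the multi-resolvent chaining in part (i): Proposition \ref{prop:reg-Ra} gains exactly two derivatives per resolvent at a center index constrained to $]-d/2,d/2[$, and chaining $n$ such estimates can force intermediate Sobolev indices outside this admissible window. I would overcome this by distributing the derivative gain non-uniformly across the $n$ resolvents, falling back on Proposition \ref{prop:Rari-H1} (valid for general $G$) at those steps where Proposition \ref{prop:reg-Ra} is inapplicable. The delicate computational point is the resolvent identity for the compactly supported correction $\divg(G_0 \nabla)$: the two derivatives it contains produce exactly the $\abs z^2$ needed to balance the $\abs z^{-2}$ from the adjacent $R(z')$, so the comparison between $R(z')$ and $R_\infty(z')$ does not degrade the low-frequency asymptotics.
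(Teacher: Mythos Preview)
Your plan for part (i) is close to the paper's, and the ingredients (splitting $G=G_0+G_\infty$ via Lemma \ref{lem:chi-0}, the resolvent identity, and chaining Sobolev mapping properties of $R_\infty(z')$) are the right ones.  There is, however, a genuine gap in how you handle the correction term.  After writing $R=R_\infty+R\,\D_{G_0}R_\infty$, the full resolvent $R(z')$ still sits at the left end, and the only a priori bound available for it is Proposition \ref{prop:Rari-H1}, i.e.\ $R(z'):H_z^{-1}\to H_z^{1}$.  If the target index satisfies $s_1'>1$ (which is allowed as soon as $d\ge 3$) this term cannot reach $H_z^{s_1'}$, and ``falling back on Proposition \ref{prop:Rari-H1}'' does not resolve it because that proposition is pinned at center index $0$.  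The paper fixes this not by Sobolev bookkeeping but by exploiting the compact support of $G_0$ in a different way: each occurrence of $\D_{G_0}$ is used to insert \emph{extra} weights $\pppg x^{-\d_1},\pppg x^{-\d_2}$ (since $\pppg x^{\d}G_0$ is still bounded), and the long product is broken into a telescoping product of factors $\pppg x^{-\d_1}D^{\a}R_\infty^{[m]}(z')D^{\b}\pppg x^{-\d_2}$ estimated \emph{individually in} $\Lc(L^2)$.  The derivatives $D^{\a},D^{\b}$ coming from $\D_{G_0}$ are what make the combinatorics close (see the displayed inequality after \eqref{eq:R-th-R0-derivee} in the paper).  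Your ``$\divg(G_0\nabla)\in\Lc(H_z^{s+1},H_z^{s-1})$ with norm $\lesssim|z|^2$'' is true but does not capture this weight-insertion trick.

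Your approach to part (ii) has a concrete error.  You claim that the rescaled coefficients $G(\cdot/r)$, $w(\cdot/r)$ still satisfy \eqref{hyp-swa} with the same constants because ``each derivative only picks up positive powers of $r$''.  This is backwards: $\partial_x^\a\big(G(x/r)\big)=r^{-|\a|}(\partial^\a G)(x/r)$ picks up a \emph{negative} power $r^{-|\a|}$, and for $|x|\lesssim r$ one gets $\pppg x^{\rho_0+|\a|}\big|\partial_x^\a(G(x/r)-\Id)\big|\sim r^{-|\a|}$, which blows up as $r\to 0$.  The symbol class $\Sc^{-\rho_0}$ is not dilation invariant, so reducing to part (i) by conjugation with $O_r$ does not give a uniform family.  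The paper's proof of part (ii) is entirely different: it keeps the operators on the original space and commutes the growing weight $\pppg{rx}^\d$ through $R_\infty^{[n]}(ir)$ by writing $(rx)^\b$ in terms of iterated commutators $\ad_{rx}^{\b_2}$ (controlled via the $\Cc_z^N$ norms of Lemma \ref{lem:commutators} and Proposition \ref{prop:reg-Ra}), then interpolates in $\d$.  The compactly supported correction is again handled by inserting $\pppg x^{-\d}$ at each $\D_{G_0}$ and invoking the already-proved \eqref{eq:R-derivees}.
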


\begin{proof}
\stepp Let $\gamma > 0$ to be chosen small enough. Let $G_0$ and $G_\infty$ be given by Lemma \ref{lem:chi-0}. Let $R_\infty(z')$ and $R_\infty^{[n]}(z')$ be defined as $R(z')$ and $R^{[n]}(z')$ with $G$ replaced by $G_\infty$. Then Proposition \ref{prop:reg-Ra} applies to $R_\infty(z')$.

\stepp Let $\a_1,\a_2 \in \N^d$ with $\abs{\a_1},\abs{\a_2} \leq 1$. We prove 
\begin{equation} \label{eq:R-derivees}
\nr{\pppg x^{-\d_1} D^{\a_1} R^{[n]}(z') D^{\a_2} \pppg x^{-\d_2}}_{\Lc(L^2)} \leq C \abs z^{\min(s_1+s_2 + \abs{\a_1} + \abs{\a_2}, 2n)}.
\end{equation}
With $\a_1 = \a_2 = 0$ this will give \eqref{eq:R}. Since we can choose $s_1$ and $s_2$ smaller, it is enough to consider the case $s_1 + s_2 \leq 2n - \abs {\a_1} - \abs {\a_2}$.
We first prove \eqref{eq:R-derivees} with $R^{[n]}(z')$ replaced by $R_\infty^{[n]}(z')$. By Remark \ref{rem:dec-sob}, the multiplication by $w$ defines a bounded operator on $H_{z}^s$ uniformly in $z$ for any $s \in \big]-\frac d 2, \frac d 2\big[$. With Proposition \ref{prop:reg-Ra}, we obtain that the operator $R_\infty^{[n]}(z')$ is uniformly bounded in $\Lc(H_z^{-s_2 - \abs {\a_2}},H_z^{s_1+\abs{\a_1}})$ if $\g > 0$ was chosen small enough, and then $D^{\a_1} R_\infty^{[n]}(z') D^{\a_2}$ is of size $O(\abs z^{\abs{\a_1}+\abs {\a_2}})$ in $\Lc(H_z^{-s_2},H_z^{s_1})$. Then \eqref{eq:R-derivees} for $R_\infty^{[n]}(z')$ follows from Lemma \ref{lem:weight}.

\stepp Similarly, we prove \eqref{eq:R-th-R0} for $R_\infty^{[n_1]}(z')$ with an additional derivative. Let $\a \in \N^d$ with $\abs \a \leq 1$. We consider the case $s_1 + s_2 \leq 2n_1 + 2n_2 - 2\s - \abs \a - \rho$. Assume that $\a=0$ or $\s = 0$ or $n_1 > 1$ or $s_1 < \frac d 2 - \rho$. Then there exists $s \in \big]-\frac d 2 + \rho,\frac d 2\big[$ such that 
\begin{equation} \label{eq:cond-s}
s_1 + \abs \a - 2n_1 + \s + \rho \leq s \leq -s_2 + 2n_2 - \s.
\end{equation}
Then $R_0^{[n_2]}(z')$ is uniformly bounded in $\Lc(H_z^{-s_2},H_z^{s+\s})$, by Proposition \ref{prop:Pa-Po} applied with $\rho' = \rho$ the operator $\th_\s(z)$ is of size $O(\abs z^\rho)$ in $\Lc(H_z^{s+\s},H_z^{s-\s-\rho})$ and finally $D^\a R_\infty^{[n_1]}(z')$ is of size $O(\abs z^{\abs \a})$ in $\Lc(H_z^{s-\s-\rho},H_z^{s_1})$ if $\g > 0$ is small enough. With Lemma \ref{lem:weight} this gives
\begin{equation} \label{eq:R-th-R0-derivee}
\nr{\pppg x^{-\d_1} D^{\a} R_\infty^{[n_1]}(z') \th_\sigma(z) R_0^{[n_2]}(z')  \pppg x^{-\d_2}} \lesssim \abs z^{\min(s_1 + s_2 + \rho + \abs \a ,2n_1 + 2n_2 - 2\sigma)}.
\end{equation}
Notice that this does not apply if $\abs \a = 1$, $\s = 1$, $n_1 = 1$ and $s_1 \geq \frac d 2 - \rho$, since then there is no $s$ smaller than $\frac d 2$ which satisfies \eqref{eq:cond-s}.

\stepp Now we finish the proof of \eqref{eq:R-derivees}. Using the resolvent identity 
\begin{equation*}
R(z') = R_\infty(z') + R_\infty(z') \D_{G_0} R_\infty(z') + R_\infty(z') \D_{G_0} R(z') \D_{G_0} R_\infty(z'),
\end{equation*}
we check by induction on $n \in \N^*$ that we can write $R^{[n]}(z')$ as a sum of terms of the form
\begin{equation*} 
T(z') = R_\infty^{[n_0]}(z') B_1(z') R_\infty^{[n_1]}(z') B_2(z') \dots  R_\infty^{[n_{k-1}]}(z') B_{k} (z') R_\infty^{[n_k]}(z'),
\end{equation*}
where $k \in \N$, $n_0,\dots,n_k \in \N^*$ are such that $n_0 + \dots + n_k = n + k$, and for $j \in \Ii 1 {k}$ the operator $B_j(z')$ is equal to $\abs {z'}^{-2}\D_{G_0}$ or $\abs {z'}^{-2}\D_{G_0} R(z') \D_{G_0}$.
By Proposition \ref{prop:Rari-H1}, an operator of the form $D_{\ell_1} R(z') D_{\ell_2}$, $1 \leq \ell_1,\ell_2 \leq d$, extends to a bounded operator on $L^2$ uniformly in $z' \in \DI$. Using \eqref{eq:R-derivees} proved for $R_\infty$, the compactness of the support of $G_0$ and the derivatives given by the operator $\D_{G_0}$, we obtain
\[
\nr{\pppg x^{-\d_1} D^{\a_1} T(z') D^{\a_2} \pppg x^{-\d_2}}_{\Lc(L^2)} \lesssim \sum_{\ell_1,\dots,\ell_{2k} = 1}^d \Nc_{\ell_1,\dots,\ell_{2k}}
\]
where 
\begin{eqnarray*}
\lefteqn{\Nc_{\ell_1,\dots,\ell_{2k}}}\\
&& \lesssim \frac 1 {\abs{z}^{2k}}  \nr{\pppg x^{-\d_1} D^{\a_1} R_\infty^{[n_0]}(z') D_{\ell_1} \pppg x^{-\d_2}} \\
&& \quad  \times \prod_{j=1}^{k-1} \nr{\pppg x^{-\d_1} D_{\ell_{2j}} R_\infty^{[n_j]}(z') D_{\ell_{2j+1}} \pppg x^{-\d_2}}
\nr{\pppg x^{- \d_1} D_{\ell_{2k}} R_\infty^{[n_k]}(z') D^{\a_2} \pppg x^{-\d_2}} \\
&& \lesssim \abs{z}^{-2k} \abs z^{\min(s_1+s_2+\abs {\a_1} + 1,2n_0)} \times \prod_{j=1}^{k-1} \abs z^{\min(s_1+s_2+2,2n_j)} \times \abs z^{\min(s_1+s_2+1+\abs {\a_2},2n_k)}.
\end{eqnarray*}
We can check that this gives \eqref{eq:R-derivees} if one of the minima is equal to the first argument. Otherwise the sum of the powers of $\abs z$ is equal to $-2k + \sum_{j=0}^k 2n_j = 2n$. Then we also have \eqref{eq:R-derivees} and hence \eqref{eq:R}.

\stepp For \eqref{eq:R-th-R0} we replace $R^{[n_1]}(z')$ by the following expression, also given by the resolvent identity:
\begin{equation} \label{eq:res-id-R-Rinf}
R^{[n_1]}(z') = R_\infty^{[n_1]}(z') + \frac 1 {\abs z^2} \sum_{k=1}^{n_1} R^{[k]}(z') \D_{G_0} R_\infty^{[n_1-k+1]}(z').
\end{equation}
The contribution of the term $R_\infty^{[n_1]}(z')$ in \eqref{eq:R-th-R0} is already estimated by \eqref{eq:R-th-R0-derivee} applied with $\a = 0$. We set $s_1' = \max(s_1 - 1,0)< \frac d 2 - \rho$ and consider $\d_1' > s_1'$. Let $k \in \Ii 1 {n_1}$. By \eqref{eq:R-derivees} and \eqref{eq:R-th-R0-derivee} we have
\begin{eqnarray*} 
\lefteqn{\frac 1 {\abs z^2} \nr{\pppg x^{-\d_1}  R^{[k]}(z') \D_{G_0} R_\infty^{[n_1-k+1]}(z') \th_\sigma(z) R_0^{[n_2]}(z')  \pppg x^{-\d_2}}_{\Lc(L^2)}}\\
&& \lesssim \frac 1 {\abs z^2} \sum_{\ell_1,\ell_2 = 1}^d \nr{\pppg x^{-\d_1}  R^{[k]}(z') D_{\ell_1} \pppg{x}^{-\d_2}} \\
&& \qquad \times \nr{\pppg x^{-\d_1'} D_{\ell_2} R_\infty^{[n_1-k+1]}(z') \th_\sigma(z) R_0^{[n_2]}(z')  \pppg x^{-\d_2}}\\
&& \lesssim \abs z^{-2} \abs z^{\min(s_1+s_2+1, 2k)} \abs z^{\min(s_1'+s_2+1+\rho,2(n_1-k+1) + 2n_2 - 2\s)}\\
&& \lesssim \abs z^{\min(s_1 + s_2 + \rho, 2n_1 + 2n_2 - 2\s)}.
\end{eqnarray*}
This concludes the proof of \eqref{eq:R-th-R0}.

\stepp We turn to the proofs of \eqref{eq:x-R-rx}-\eqref{eq:rx-R-th-R0-x}. We can forget the factor $w$ in \eqref{eq:x-R-rx} and \eqref{eq:rx-R-th-R0-x} since it commutes with $\pppg{rx}^\d$ and defines a bounded operator on $L^2$. As above, for \eqref{eq:x-R-rx}, \eqref{eq:x-R-th-R0-rx} and \eqref{eq:rx-R-th-R0-x} we first give a proof for $R_\infty(ir)$ with an additional derivative, and then we deduce the general case with \eqref{eq:res-id-R-Rinf} and \eqref{eq:R-derivees}. We begin with \eqref{eq:x-R-rx}. Let $k \in \N$ and $\b \in \N^d$ with $\abs \b \leq 2k$. Let $\a \in \N^d$ with $\abs \a \leq 1$. We can write $\pppg {rx}^{-2k} D^{\a} R_\infty^{[n]}(ir) (rx)^\b$ as a sum of terms of the form 
\[
\pppg {rx}^{-2k} (rx)^{\b_1} \ad_{rx}^{\b_2} \big(D^{\a} R_\infty^{[n]}(ir) \big),
\]
where $\b_1 + \b_2 = \b$. Assume that $s \leq 2n - \abs {\a}$. By Lemma \ref{lem:commutators}, Proposition \ref{prop:reg-Ra} and \eqref{eq:comm-T1T2}, the operator $\ad_{rx}^{\b_2} \big(D^\a R^{[n]}(ir) \big)$ is of size $O(r^{\abs \a})$ in $\Lc(L^2,H_r^s)$.  Since $\pppg {rx}^{-2k} (rx)^{\b_1}$ is uniformly bounded in $\Lc(H_r^s)$, this proves that $\pppg {rx}^{-2k} D^\a R_\infty^{[n]}(ir) \pppg{rx}^{2k}$ is of size $O(r^{\a})$ in $\Lc(L^2,H_r^s)$ for any $k \in \N$. By interpolation we get
\[ 
\nr{\pppg {rx}^{-\d} D^{\a} R_\infty^{[n]}(ir) \pppg{rx}^\d}_{\Lc(L^2,H_r^s)} \lesssim r^{\abs \a}.
\]
On the other hand, by Lemma \ref{lem:weight},
\begin{align*}
\nr{\pppg x^{-\d} \pppg {rx}^\d}_{\Lc(H^s_r,L^2)} 
& \lesssim 
 \big\| (1+\abs{rx}^\d) \pppg{x}^{-\d} \big\|_{\Lc(H_r^s,L^2)} \\
& \lesssim  \|\pppg {x}^{-\d}\|_{\Lc(H^s_r,L^2)} + r^\d \big\|\abs x^\d \pppg{x}^{-\d} \big\|_{\Lc(L^2)}\\
& \lesssim r^s.
\end{align*}
These estimates together prove
\begin{equation} \label{eq:x-Rinf-rx-derivee}
\nr{\pppg x^{-\d}  D^{\a} R_\infty^{[n]}(ir) \pppg{rx}^\d}_{\Lc(L^2)} \lesssim r^{ \min(s+\abs \a,2n)}.
\end{equation}
If $n_1 \neq 1$ or $\a = 0$ or $s < \frac d 2 - \rho$ or $\s = 0$, we similarly prove
\begin{equation} \label{eq:x-R-th-R0-rx-derivee}
\nr{\pppg x^{-\d} D^{\a} R_\infty^{[n_1]}(ir) \th_\s(z) R_0^{[n_2]}(ir) \pppg{rx}^\d}_{\Lc(L^2)} \lesssim r^{\min(s + \rho+\abs \a,2n_1 + 2n_2-2\s)}.
\end{equation}
Finally, we also have \eqref{eq:rx-R-th-R0-x} with $R^{[n_1]}(ir)$ replaced by $R_\infty^{[n_1]}(ir)$.

\stepp Let $k \in \Ii 1 n$. By \eqref{eq:R-derivees} and \eqref{eq:x-Rinf-rx-derivee} we have 
\begin{eqnarray*}
\lefteqn{\frac 1 {r^2}\nr{\pppg x^{-\d} D^\a  R^{[k]}(ir) \D_{G_0} R_\infty^{[n-k+1]}(ir) \pppg{rx}^\d}}\\
&& \lesssim \frac 1 {r^2} \sum_{\ell_1,\ell_2 = 1}^d \nr{\pppg x^{-\d} D^\a R^{[k]}(ir) D_{\ell_1}} \nr{\pppg{x}^{-\d} D_{\ell_2} R_\infty^{[n-k+1]}(ir)  \pppg{rx}^\d}\\
&& \lesssim r^{-2} r^{\min(s+\abs \a+1,2k)} r^{\min(s+1,2(n-k+1))}\\
&& \lesssim r^{\min(s+\abs \a,2n)}.
\end{eqnarray*}
With \eqref{eq:res-id-R-Rinf} and \eqref{eq:x-Rinf-rx-derivee} this proves 
\begin{equation} \label{eq:x-R-rx-derivee}
\nr{\pppg x^{-\d}  D^{\a} R^{[n]}(ir) \pppg{rx}^\d}_{\Lc(L^2)} \lesssim r^{ \min(s+\abs \a,2n)}.
\end{equation}
This gives \eqref{eq:x-R-rx}. Similarly, 
\begin{equation} \label{eq:rx-R-x-derivee}
\nr{\pppg {rx}^{\d} R^{[n]}(ir) D^{\a} \pppg{x}^{-\d}}_{\Lc(L^2)} \lesssim r^{ \min(s+\abs \a,2n)}.
\end{equation}
This gives \eqref{eq:rx-R0-x} as a particular case. 

\stepp We finish the proof of \eqref{eq:x-R-th-R0-rx} as we did for \eqref{eq:R-th-R0}. We set $s' = \max(s-\rho,0)$ and for $k \in \Ii 1 {n_1}$ we use \eqref{eq:R-derivees} and \eqref{eq:x-R-th-R0-rx-derivee} to write 
\begin{eqnarray*}
\lefteqn{\frac 1 {r^2} \nr{\pppg {x}^{-\d}R^{[k]}(ir) \D_{G_0}  R_\infty^{[n_1-k+1]}(ir) \th_\s(z) R_0^{[n_2]}(ir) \pppg{rx}^\d}}\\
&& \lesssim \frac 1 {r^2}  \sum_{\ell_1,\ell_2 = 1}^d \nr{\pppg {x}^{-\d}R^{[k]}(ir) D_{\ell_1} \pppg{x}^{-\rho_0}} \nr{\pppg{x}^{-\d} D_{\ell_2}  R_\infty^{[n_1-k+1]}(ir) \th_\s(z) R_0^{[n_2]}(ir) \pppg{rx}^\d}\\
&& \lesssim r^{-2} r^{\min(s+\rho+1,2k)} r^{\min(s'+1+\rho,2(n_1-k+1) + 2n_2 - 2\s)} \\
&& \lesssim r^{\min(s+\rho,2n_1+2n_2-2\s)}.
\end{eqnarray*}
Finally, the proof of \eqref{eq:rx-R-th-R0-x} similarly follows from \eqref{eq:res-id-R-Rinf}, the fact that it is already proved for $R_\infty$ and, for $k \in \Ii 1 {n_1}$, \eqref{eq:rx-R-x-derivee} and \eqref{eq:R-th-R0-derivee} applied with $s_1 = 0$ and $s_2 = s$.
\end{proof}

To finish the proof of Proposition \ref{prop:regularity-Rz-A} we have to replace $\pppg{rx}^\d$ by $\pppg{A_z}^\d$ in \eqref{eq:x-R-rx}-\eqref{eq:rx-R-th-R0-x}. For this we use again the elliptic regularity to compensate the derivatives with appear in $\pppg{A_z}^\d$.

\begin{proposition} \label{prop:xTA}
Let $\d \geq 0$ and let $n$ be an even positive integer at least equal to $\d$. Then there exists $C > 0$ such that all $r \in ]0,1]$ we have 
\begin{equation*} 
\nr{\pppg {rx}^{-\d} R^{[n]}(ir)  w \pppg {A_r}^\d}_{\Lc(L^2)}\leq C , \quad  \nr{\pppg {A_r}^\d w R^{[n]}(ir) \pppg {rx}^{-\d}}_{\Lc(L^2)} \leq C.
\end{equation*}
Moreover, the same estimates hold with $R^{[n]}(ir)$ and $w$ replaced by $R_0^{[n]}(ir)$ and 1.
\end{proposition}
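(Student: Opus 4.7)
Since $\pppg{A_r}^\d$ and $\pppg{rx}^{-\d}$ are self-adjoint on $L^2$, and a direct computation using $P^* = w P w^{-1}$ yields $(R^{[n]}(ir) w)^* = w R^{[n]}(ir)$ (with $R_0^{[n]}(ir)$ self-adjoint), the second estimate in each pair is the $L^2$-adjoint of the first, so it is enough to bound $\pppg{rx}^{-\d} R^{[n]}(ir) w \pppg{A_r}^\d$ and $\pppg{rx}^{-\d} R_0^{[n]}(ir) \pppg{A_r}^\d$ in $\Lc(L^2)$. I will then conjugate by the unitary dilation $O_r$ from \eqref{def-dilation}. A direct calculation gives $O_r^{-1} P O_r = r^2 \tilde P_r$, with $\tilde P_r = -\tilde w_r^{-1} \divg \tilde G_r \nabla$, $\tilde w_r(x) = w(x/r)$, $\tilde G_r(x) = G(x/r)$; hence $O_r^{-1}(R^{[n]}(ir) w) O_r = (\tilde P_r + 1)^{-n}$ and $O_r^{-1} R_0^{[n]}(ir) O_r = (-\D + 1)^{-n}$. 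Similarly $O_r^{-1} A_r O_r = A_\chi$ (checked directly on test functions, the $r$-rescaling absorbing into the $x$ and $\nabla$ factors), so by functional calculus $O_r^{-1} \pppg{A_r}^\d O_r = \pppg{A_\chi}^\d$, while $O_r^{-1} \pppg{rx}^{-\d} O_r = \pppg x^{-\d}$. The problem thus reduces to the $r$-uniform bound
\[
\nr{\pppg x^{-\d} T_r \pppg{A_\chi}^\d}_{\Lc(L^2)} \leq C,
\]
with $T_r$ equal to either $(\tilde P_r + 1)^{-n}$ or $(-\D + 1)^{-n}$.

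By complex interpolation between $\d = 0$ (trivial, since $\pppg x^0 = 1$, $\pppg{A_\chi}^{iy}$ is unitary, and $T_r$ is $L^2$-bounded) and $\d$ equal to the smallest positive even integer at least as large as the desired value (which does not exceed $n$ by hypothesis), I may assume $\d = 2k$ with $2k \leq n$. I will then write $\pppg{A_\chi}^{2k} = (I + A_\chi^2)^k$ as a polynomial of degree $k$ in $A_\chi^2$. From \eqref{def-A-chi}, $A_\chi$ is a first-order differential operator with principal part $-i(1-\chi) x \cdot \nabla$, so each $A_\chi^{2j}$ ($j \leq k$) is a differential operator of order $2j$ with coefficients of polynomial growth $O(\pppg x^{2j})$ supported away from the origin. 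Expanding and commuting these derivatives through $T_r$ by identities of the form $T_r A_\chi^m = A_\chi^m T_r - T_r [A_\chi^m, \tilde P_r] T_r$ (and their iterates), I will obtain a finite sum of terms schematically of the form $\pppg x^{-2k} \cdot P_j(x) \cdot T_r^{(\ell)} \cdot D^\a$, where $P_j(x)$ has growth $O(\pppg x^{2j})$ (so $\pppg x^{-2k} P_j$ is bounded), $T_r^{(\ell)}$ is a multiple commutator of $T_r$ with $A_\chi$, and $|\a| \leq 2j \leq 2k \leq 2n$, so that $T_r^{(\ell)} D^\a$ is $L^2$-bounded by the uniform elliptic regularity of $(\tilde P_r + 1)^{-n}$ in the rescaled setting (obtained through the decomposition $G = G_0 + G_\infty$ of Lemma \ref{lem:chi-0} combined with Propositions \ref{prop:Rari-H1} and \ref{prop:reg-Ra}, exactly as in the proof of Proposition \ref{prop:elliptic-Schro}).

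The main obstacle is the $r$-uniform control of the iterated commutators $[A_\chi, \tilde P_r]$. Individual derivatives of $\tilde G_r$ and $\tilde w_r$ blow up pointwise like $r^{-|\a|}$ as $r \to 0$, but the particular combinations appearing in these commutators — involving $A_\chi$ applied to coefficients, i.e., terms such as $(1-\chi(x)) x_j r^{-1} (\partial_j G)(x/r)$ — remain uniformly bounded in $r$. Indeed, on $\{|x| \leq r\}$ the factor $|x|/r$ absorbs $r^{-1}$ (since $\partial G$ is bounded), while on $\{|x| \geq r\}$ the decay $|(\partial_j G)(y)| \lesssim \pppg y^{-\rho_0 - 1}$ from \eqref{hyp-swa} gives $r^{-1}|x|(r/|x|)^{\rho_0 + 1} \leq 1$. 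This is exactly the mechanism already exploited in the proof of Proposition \ref{prop:Kz}, and it extends by iteration to all higher commutators with $A_\chi$. The free case $T_r = (-\D + 1)^{-n}$ is $r$-independent, and the resulting fixed estimate $\nr{\pppg x^{-\d}(-\D + 1)^{-n} \pppg{A_\chi}^\d}_{\Lc(L^2)} < +\infty$ follows from the same commutator expansion together with standard elliptic regularity of $(-\D + 1)^{-n}$.
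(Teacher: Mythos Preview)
Your overall strategy --- conjugating by $O_r$, reducing to even integer $\d$ by interpolation, and observing that the commutators $[A_\chi,\tilde P_r]$ stay uniformly bounded thanks to the specific combination $(1-\chi)x\cdot\nabla$ acting on the rescaled coefficients --- is sound, and the adjoint and rescaling computations are correct. The gap is in the step where you assert that the terms $T_r^{(\ell)} D^\a$ are $L^2$-bounded uniformly in $r$ for $\abs\a$ up to $2k$.

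The only uniform elliptic regularity available here is Proposition~\ref{prop:Rari-H1}, which after rescaling gives $(\tilde P_r+1)^{-1}:H^{-1}\to H^1$, together with Proposition~\ref{prop:reg-Ra}, which gives $H^{s-1}\to H^{s+1}$ only for $\abs s<\tfrac d2$ and only for the small perturbation $G_\infty$. Neither iterates to a uniform bound $T_r:L^2\to H^{\abs\a}$ for $\abs\a\geq 2$. The obstruction is concrete: commuting a bare derivative $D_j$ through $(\tilde P_r+1)^{-1}$ produces $\partial_j\tilde G_r(x)=r^{-1}(\partial_j G)(x/r)$, which is of size $r^{-1}$ near $x=0$ and hence \emph{not} uniformly bounded as $r\to 0$. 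Your own analysis shows that only the particular combination $(1-\chi)x\cdot\nabla\tilde G_r$ appearing in $\ad_{A_\chi}$ is uniformly controlled; a naked $\partial_j\tilde G_r$ is not. So while $\ad_{A_\chi}^\ell(T_r)$ is indeed uniformly bounded $H^{-1}\to H^1$, the extra $D^\a$ on one side with $\abs\a\geq 2$ cannot be absorbed by the cited propositions, and the decomposition $G=G_0+G_\infty$ does not help here since it trades weights, not derivatives.

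The paper avoids this by never asking a single resolvent to absorb more than one derivative. It proves by induction on $k$ the stronger family
\[
\nr{\pppg{rx}^{-k}\,\ad_{rx}^\mu\big(R^{[n]}(ir)w\big)\,A_r^k}_{\Lc(L^2)}\lesssim 1
\]
for all multi-indices $\mu$, and in the inductive step peels off one factor $A_r$ as a bounded piece plus $(1-\chi_r)\sum_\ell rx_\ell\cdot r^{-1}D_\ell$. The single derivative $r^{-1}D_\ell$ is absorbed by the rightmost factor $R(ir)w$ (Proposition~\ref{prop:Rari-H1} suffices), and the factor $rx_\ell$ is commuted leftwards past $A_r^{k-1}$ using the algebraic identity~\eqref{eq:comm-Ak-xl}; this produces either an extra $rx_\ell$ on the far left, compensated by $\pppg{rx}^{-k}$, or a new $\ad_{rx_\ell}$ on $R^{[n-1]}(ir)w$, which is exactly why the induction hypothesis carries the $\ad_{rx}^\mu$. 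Your scheme lacks an analogue of this one-derivative-at-a-time mechanism, and as written relies on uniform higher Sobolev regularity that is not available.
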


\begin{proof}
We prove the first estimate, the second is similar. We start by proving by induction on $k \in \N$ that for $n \geq k$ and $\mu \in \N^d$ we have 
\begin{equation} \label{eq:x-res-A}
\nr{\pppg {rx}^{-k} \ad_{rx}^\m \big( R^{[n]}(ir) w \big)  A_r^k}_{\Lc(L^2)} \lesssim 1.
\end{equation}
The case $k = 0$ is given by Proposition \ref{prop:Rari-H1} (we use the convention that $R^{[0]}(ir) w = \Id$). Let $k \in \N^*$, $n \geq k$ and $\mu \in \N^d$. We can write $\ad_{rx}^\m \big( R^{[n]}(ir) w \big)$ as a sum of terms of the form $\ad_{rx}^{\m_1} \big( R^{[n-1]}(ir) w \big)\ad_{rx}^{\m_2} \big( R(ir) w \big)$ where $\mu_1 + \mu_2 = \mu$.
For such a term we have  
\begin{multline*}
\pppg {rx}^{-k} \ad_{rx}^{\m_1} \big( R^{[n-1]}(ir) w \big)\ad_{rx}^{\m_2} \big(R(ir) w \big)  A_r^k\\
= \sum_{j=0}^k \pppg {rx}^{-k} \ad_{rx}^{\m_1} \big(R^{[n-1]}(ir) w \big) A_r^j \ad_{A_r}^{k-j}\big(\ad_{rx}^{\m_2} \big( R(ir)w \big) \big).
\end{multline*}
For the contribution of $j \in \Ii 0 {k-1}$ we apply the induction assumption, Proposition \ref{prop:Rari-H1} and \eqref{eq:comm-Achi-V} to get a uniform bound in $\Lc(L^2)$. Now we consider the term corresponding to $j = k$. We have 
\[
A_r^k =  A_r^{k-1} \frac {ix\cdot\nabla \chi_r - id(1-\chi_r)}{2} + A_r^{k-1}(1-\chi_r) \sum_{\ell=1}^d r x_\ell \cdot r\inv D_\ell.
\]
The contribution of the first term is estimated as before (note that $x\cdot \nabla \chi_r$ is uniformly bounded). Now let $\ell \in \Ii 1 d$. By Proposition \ref{prop:Rari-H1} again, the operator $r\inv D_\ell \ad_{rx}^{\m_2} \big( R(ir)w \big)$ extends to a uniformly bounded operator in $\Lc(L^2)$. On the other hand, by \eqref{eq:comm-Ak-xl} we have 
\begin{eqnarray*}
\lefteqn{\pppg {rx}^{-k} \ad_{rx}^{\m_1} \big( R^{[n-1]}(ir)w \big) A_r^{k-1} rx_\ell}\\ 
&& = \pppg {rx}^{-k} \ad_{rx}^{\m_1} \big( R^{[n-1]}(ir)w \big) rx_\ell (A_r-i(1-\chi_r))^{k-1} \\
&& = rx_\ell \pppg {rx}^{-k} \ad_{rx}^{\m_1} \big( R^{[n-1]}(ir)w \big) (A_r-i(1-\chi_r))^{k-1}\\
&& + \pppg {rx}^{-k} \ad_{rx_j} \big(\ad_{rx}^{\m_1} \big( R^{[n-1]}(ir) \big)\big)  (A_r-i(1-\chi_r))^{k-1}. 
\end{eqnarray*}
Both terms are estimated with the induction assumption, and \eqref{eq:x-res-A} is proved. With $\mu = 0$ this gives the first estimate of the proposition when $\d$ is an even integer. The general case follows by interpolation. 
\end{proof}

\section{The Commutators method} \label{sec:Mourre}

In this section we prove Theorem \ref{th:mourre-Schro}. The proof relies on the abstract positive commutators method. Compared to the already known versions, we show that we can apply the result to operators like $R(z)$ even though they are not exactly resolvents, and that the estimates for the powers of the resolvent can in fact be applied to a product of different operators. Notice that we will not use the selfadjointness of the original operator $P$. The method is naturally adapted to dissipative operators.

\subsection{Abstract uniform estimates} \label{sec:conjugate-operator}

Let $\Hc$ and $\Kc$ be as in the beginning of Section \ref{sec:preliminary-results-commutators}.

 For $Q \in \bar \Lc(\Kc,\Kc^*)$ we have $Q^* \in \bar \Lc(\Kc,\Kc^*)$. We set $\Re(Q) = (Q+Q^*)/ 2$ and $\Im(Q) = (Q-Q^*) /2i$. We similarly define the real and imaginaly parts of $R \in \bar \Lc(\Kc^*,\Kc)$. We say that $Q \in \bar \Lc(\Kc,\Kc^*)$ is non-negative if for all $\f \in \Kc$ we have $\innp{Q\f}{\f}_{\Kc^*,\Kc} \geq 0$, and that $R \in \bar \Lc(\Kc,\Kc^*)$ is non-negative if for all $\psi \in \Kc^*$ we have $\innp{\psi}{R\psi}_{\Kc^*,\Kc} \geq 0$. Finally we say that $Q$ is dissipative if $\Im(Q) \leq 0$.\\

We consider $Q \in \bar \Lc(\Kc,\Kc^*)$ with negative imaginary part: there exists $c_0 > 0$ such that 
\[
Q_+ := -\Im(Q) \geq c_0 \Ic, 
\]
where $\Ic \in \bar \Lc(\Kc,\Kc^*)$ is the natural embedding. By the Lax-Milgram Theorem, $Q$ has an inverse in $\bar \Lc(\Kc^*,\Kc)$.\\

Let $A$ be a selfadjoint operator on $\Hc$. We use the notation of Section \ref{sec:preliminary-results-commutators}.

\begin{definition} \label{def-conjugate-operator}
Let $N \in \N^*$ and $\Upsilon \geq 1$. We say that $\A$ is $\Upsilon$-conjugate to $Q$ up to order $N$ if the following conditions are satisfied.
\begin{enumerate}[\rm (H1)]
\item \label{hyp-M-Kc} For $\f \in \Kc$ we have $\nr{\f}_\Hc \leq \Upsilon \nr{\f}_\Kc$.
\item \label{hyp-M-propagator} For all $\th \in [-1,1]$ the propagator $e^{-i\th A} \in \Lc(\Hc)$ defines by restriction a bounded operator on $\Kc$.

\item \label{hyp-M-B} $Q$ belongs to $\bar \Cc^{N+1}_A(\Kc,\Kc^*)$ with $\nr{Q}_{\bar \Cc^{N+1}_A(\Kc,\Kc^*)} \leq \Upsilon$ and $Q_+$ belongs to $\bar \Cc^1_A(\Kc,\Kc^*)$ with $\nr{Q_+}_{\bar \Cc^1_A(\Kc,\Kc^*)} \leq \Upsilon$.
\item \label{hyp-M-Qbot-Qpp}
There exist $Q_\bot \in \bar \Lc(\Kc,\Kc^*)$ dissipative, $\Qpp \in \bar \Lc(\Kc,\Kc^*)$ non-negative and $\Pi \in \Cc^1_A(\Hc,\Kc)$ such that, with $\Pi_\bot = \Id_\Kc - \Pi \in \Lc(\Kc)$,
\begin{enumerate}
\item $Q = Q_\bot -i \Qpp$,
\item $\nr{\Qpp}_{\bar \Lc(\Kc,\Kc^*)} \leq \Upsilon$, $\nr{\Pi}_{\Cc^1_A(\Hc,\Kc)} \leq \Upsilon$, and for $\f \in \Hc$ we have $\nr{\Pi \f}_\Kc \leq \Upsilon \nr{\Pi \f}_{\Hc}$,
\item $Q_\bot$ has an inverse $R_\bot \in \bar \Lc(\Kc^*,\Kc)$ which satisfies $\nr{\Pi_\bot R_\bot}_{\bar \Lc(\Kc^*,\Kc)} \leq \Upsilon$ and $\nr{R_\bot \Pi_\bot^*}_{\bar \Lc(\Kc^*,\Kc)} \leq \Upsilon$.
\end{enumerate}

\item \label{hyp-M-comm-pos}
There exists $\b \in [0,\Upsilon]$ such that if we set 
\[
M =  i \ad_{A}(Q) + \b \Qp \in \bar \Lc(\Kc,\Kc^*),
\]
then in the sense of quadratic forms on $\Hc$ we have
\begin{equation*}
\Pi^* \Re (M) \Pi \geq \Upsilon \inv \Pi^* \Ic \Pi.
\end{equation*}
\end{enumerate}
\end{definition}

The main assumption in this definition is (H\ref{hyp-M-comm-pos}). The uniform estimates given by the commutators method are the following. We give a proof adapted to this setting in Section \ref{sec:Mourre-proof}.

\begin{theorem} \label{th:mourre-abstract}
Let $N \in \N^*$ and $\Upsilon \geq 1$. Assume that $\A$ is $\Upsilon$-conjugate to $Q$ up to order $N$.
\begin{enumerate} [\rm (i)]
\item 
Let $\d > \frac 12$. There exists $C > 0$ which only depends on $\Upsilon$ and $\d$ such that
\begin{equation} \label{eq:mourre-abstract-1}
\nr{\pppg \A^{-\d} Q\inv  \pppg \A^{-\d}} _{\Lc(\Hc)}  \leq C.
\end{equation}
\item Assume that $N \geq 2$ and let $\d_1,\d_2 \geq 0$ be such that $\d_1 + \d_2< N-1$. There exists $C > 0$ which only depends on $N$, $\Upsilon$, $\d_1$ and $\d_2$ such that
\begin{equation} \label{eq:mourre-abstract-2}
\nr{\pppg{\A}^{\d_1} \1_{\R_-} (\A) Q\inv \1_{\R_+}(\A) \pppg{\A}^{\d_2}}_{\Lc(\Hc)} \leq C.
\end{equation}
\item 
Assume that $N \geq 2$ and let $\d \in \left] \frac 12 , N \right[$. There exists $C > 0$ which only depends on $N$, $\Upsilon$ and $\d$ such that
\begin{equation} \label{eq:mourre-abstract-3}
\nr{\pppg{\A}^{-\d} Q\inv  \1_{\R_+}(\A) \pppg{\A}^{\d-1}}_{\Lc(\Hc)}  \leq C
\end{equation}
and
\begin{equation} \label{eq:mourre-abstract-4}
\nr{ \pppg{\A} ^{\d-1}\1_{\R_-}(\A) Q\inv  \pppg{\A}^{-\d}}_{\Lc(\Hc)}  \leq C.
\end{equation}
\end{enumerate}
\end{theorem}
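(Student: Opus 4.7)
The plan is to follow the standard Mourre / Jensen--Mourre--Perry quantitative resolvent estimate, adapted to the dissipative Banach-triple setting $\Kc \hookrightarrow \Hc \hookrightarrow \Kc^*$. Since $Q$ is not selfadjoint and not literally a resolvent, functional calculus in $Q$ is replaced systematically by Lax--Milgram inversion, and the role usually played by the spectral projection onto the Mourre interval is taken over by the localizer $\Pi$ of (H\ref{hyp-M-Qbot-Qpp}), with the elliptic complement $R_\bot$ handling the $\Pi_\bot$ contribution via the identity $Q^{-1} = R_\bot + i R_\bot \Qpp Q^{-1} = R_\bot + i Q^{-1} \Qpp R_\bot$.

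For (i), fix $\delta > 1/2$ and set $w := \pppg{A}^{-\delta}$. I introduce the regularization
\[
Q_\epsilon := Q - i\epsilon\, \Pi^*\, \Re(M)\, \Pi, \qquad \epsilon \in [0,1],
\]
whose imaginary part, by (H\ref{hyp-M-comm-pos}), satisfies $\Im(Q_\epsilon) \leq -c_0\Ic - \epsilon \Upsilon^{-1}\Pi^*\Ic\Pi$, so Lax--Milgram gives $Q_\epsilon^{-1} \in \bar\Lc(\Kc^*,\Kc)$ with norm uniform in $\epsilon$. Setting $F(\epsilon) := w Q_\epsilon^{-1} w \in \Lc(\Hc)$, differentiation yields
\[
\partial_\epsilon F(\epsilon) = i\, w Q_\epsilon^{-1}\, \Pi^* \Re(M)\, \Pi Q_\epsilon^{-1} w.
\]
The heart of the proof is to bound this derivative by $\|F(\epsilon)\|_{\Lc(\Hc)}$ itself. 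Expanding $M = i \ad_A(Q) + \beta Q_+$, the $\beta Q_+$ piece is absorbed into the already dissipative part of $Q_\epsilon$; the $\ad_A(Q)$ piece is rewritten, via $\ad_A(Q_\epsilon^{-1}) = -Q_\epsilon^{-1}\ad_A(Q_\epsilon) Q_\epsilon^{-1}$ and (H\ref{hyp-M-B}), as a genuine commutator of $A$ with $wQ_\epsilon^{-1}w$ (legitimate on $\Hc$ because $\pppg A\, w$ is bounded on $\Hc$); the $\Pi_\bot$-contributions from the decomposition $\Id_\Kc = \Pi + \Pi_\bot$ are closed using the two identities above together with (H\ref{hyp-M-Qbot-Qpp}).(c). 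The resulting scalar inequality $\lvert \partial_\epsilon \langle F(\epsilon)\phi,\phi\rangle\rvert \lesssim \langle F(\epsilon)\phi,\phi\rangle + \|\phi\|_\Hc^2$, integrated on $[0,1]$ via Gronwall, starts from the trivial bound $\|F(1)\|_{\Lc(\Hc)} \leq C$ coming from the boosted dissipativity at $\epsilon=1$ and yields (i).

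For (ii) and (iii), the workhorse is Proposition \ref{prop:T-Ac}.(ii): if $T \in \Cc^N_A(\Hc)$ and $\delta_\pm \geq 0$ satisfy $\delta_-+\delta_+ < N$, then $\pppg A^{\delta_-} \1_{\R_-}(A) T \1_{\R_+}(A) \pppg A^{\delta_+}$ is bounded on $\Hc$ by $\|T\|_{\Cc^N_A(\Hc)}$. I would apply this with $T = w Q^{-1} w$, $w = \pppg{A}^{-\delta}$ with $\delta$ chosen to absorb the outer weights. The required membership $wQ^{-1}w \in \Cc^N_A(\Hc)$, with uniform bound, follows by iterating $\ad_A(Q^{-1}) = -Q^{-1}\ad_A(Q) Q^{-1}$, expressing $\ad_A^k(w Q^{-1} w)$ as a sum of products of $\ad_A^j(w)$ (bounded by Proposition \ref{prop:T-Ac}.(i)), $\ad_A^j(Q)$ (bounded by (H\ref{hyp-M-B}) for $j \leq N+1$), and inner copies of $Q^{-1}$ to which part (i) is applied in suitably chosen intermediate weighted $\Hc$-spaces. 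This yields (ii). Statement (iii) follows by interpolation: decompose $Q^{-1}\1_{\R_+}(A) = \1_{\R_-}(A)Q^{-1}\1_{\R_+}(A) + \1_{\R_+}(A)Q^{-1}\1_{\R_+}(A)$, treat the first summand by (ii) with $\delta_-=\delta$, $\delta_+=\delta-1$, and combine (i) on the second summand with the elementary scalar bound $\|\pppg A^{-\delta}\1_{\R_+}(A)\pppg A^{\delta-1}\|_{\Lc(\Hc)} \leq 1$ from the functional calculus of the selfadjoint $A$.

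The main obstacle is carrying out the differential-inequality step for (i) strictly within the hypotheses (H\ref{hyp-M-B})--(H\ref{hyp-M-comm-pos}) and the Banach-triple structure, without ever using a spectral calculus in $Q$. Every rearrangement of $A$ acting on $wQ_\epsilon^{-1}w$ must be expressible as a composition that closes in $\Lc(\Hc)$ through the three spaces $\Kc$, $\Hc$, $\Kc^*$; this requires careful book-keeping of which operator maps which space. A point that must be respected throughout is the asymmetry between the $\bar\Cc^{N+1}_A$ regularity imposed on $Q$ and the $\bar\Cc^1_A$ regularity imposed on $Q_+$: one commutator of $Q$ powers the positive-commutator argument of (i), the remaining $N$ are consumed by the commutator expansion of $Q^{-1}$ in parts (ii)--(iii), and the non-selfadjoint part $Q_+$ is confined to the absorbable dissipative contribution and therefore allowed to be rougher.
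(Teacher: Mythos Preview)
Your outline has a genuine gap in each of the three parts.

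\textbf{Part (i).} You take the fixed weight $w=\pppg{A}^{-\delta}$ and claim that the term $wR_\e\,\ad_A(Q)\,R_\e w$ can be rewritten as a commutator of $A$ with $wQ_\e^{-1}w$, ``legitimate on $\Hc$ because $\pppg{A}w$ is bounded''. But $\pppg{A}w=\pppg{A}^{1-\delta}$, which is bounded only for $\delta\geq 1$; the statement requires all $\delta>\tfrac12$, and the range $\delta\in\big(\tfrac12,1\big)$ is precisely the one needed later in Lemma~\ref{lem:multi-Mourre}. The paper handles this by using an $\e$-dependent weight $\Theta_\e=\pppg{A}^{-\delta}\pppg{\e A}^{\delta-1}$, so that $\|A\Theta_\e\|\lesssim\e^{\delta-1}$ blows up in a controlled, integrable way. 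With this weight the derivative $F_\e'$ acquires extra terms $\Theta_\e' R_\e\Theta_\e$, and the resulting differential inequality is not your $|F'|\lesssim F+1$ but rather
\[
\|F_\e'\|\lesssim \e^{\delta-1}+\e^{-1/2}\|F_\e\|+\e^{\delta-3/2}\|F_\e\|^{1/2},
\]
which is closed by the nonlinear Gronwall-type lemma from \cite{jensenmp84}, not by ordinary Gronwall. The quadratic estimates of Proposition~\ref{prop:estim-quad-form} and Proposition~\ref{prop:gze} are essential to produce the $\|F_\e\|^{1/2}$ factors; your sketch does not invoke them.

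\textbf{Part (ii).} Your plan is to apply Proposition~\ref{prop:T-Ac}(ii) to $T=wQ^{-1}w$, which requires $T\in\Cc^N_A(\Hc)$ with a uniform bound depending only on $\Upsilon$. Iterating $\ad_A(Q^{-1})=-Q^{-1}\ad_A(Q)Q^{-1}$ produces sums of products
\[
w\,Q^{-1}\ad_A^{j_1}(Q)\,Q^{-1}\cdots\ad_A^{j_m}(Q)\,Q^{-1}\,w,
\]
with $m+1$ inner copies of $Q^{-1}$. Part (i) only bounds $\pppg{A}^{-\delta}Q^{-1}\pppg{A}^{-\delta}$, with weights on \emph{both} sides, and there is no uniform bound on a bare $Q^{-1}$ (the lower bound $c_0$ on $-\Im(Q)$ is \emph{not} part of $\Upsilon$, and in the application it degenerates as $\Im(z^2)\to 0$). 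There is no weight available between the inner $Q^{-1}$'s, so the argument is circular. The paper avoids this entirely by introducing the Taylor-type regularization $Q_{N,\e}=\sum_{j=0}^N\frac{\e^j}{j!}\ad_A^j(Q)$, whose inverse satisfies the exact identity
\[
R_{N,\e}'=R_{N,\e}A-AR_{N,\e}+\tfrac{\e^N}{N!}R_{N,\e}\,\ad_A^{N+1}(Q)\,R_{N,\e},
\]
and then uses the exponential weights $e^{\pm\e A}\1_{\R_\mp}(A)$ so that the commutator terms cancel against the derivative of the weight, leaving only the $O(\e^N)$ remainder. This is the Jensen mechanism \cite{jensen85}; it cannot be replaced by a naive commutator expansion of $Q^{-1}$.

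\textbf{Part (iii).} Your decomposition reduces to (ii) with $\delta_1=0$, $\delta_2=\delta-1$ plus a ``diagonal'' piece $\pppg{A}^{-\delta}\1_{\R_+}(A)Q^{-1}\1_{\R_+}(A)\pppg{A}^{\delta-1}$. For the latter you invoke (i), but (i) gives $\pppg{A}^{-\delta}Q^{-1}\pppg{A}^{-\delta}$ bounded, not $\pppg{A}^{-\delta}Q^{-1}\pppg{A}^{\delta-1}$; for $\delta>1$ the right weight grows and the bound does not follow. The paper's proof of (iii) is substantially more delicate: it first proves the one-sided bounds $\|\1_{\R_-}(A)Q^{-1}\pppg{A}^{-\eta}\|\lesssim 1$ via another differential-inequality argument, then obtains the dyadic estimate $\|\1_{[n,n+1)}(A)Q^{-1}\1_{[m,m+1)}(A)\|\lesssim 1$ uniformly in $n,m$ by exploiting that $A-\lambda$ is also $\Upsilon$-conjugate to $Q$ for every $\lambda$, and finally sums using a discrete Schur/Hardy-type lemma from \cite{jensen85}.
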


We explain the notation of Definition \ref{def-conjugate-operator} on the model case, namely the free Laplacian with the generator of dilation \eqref{def-A} as the commutator. To get estimates on $\Hc= L^2$ for the resolvent $(-\D - \z)\inv$ with $\Im(\z) > 0$ and $\Re(\z)$ close to some $E > 0$, we choose $Q = (-\D-\z)$ (seen as a bounded operator from $\Kc = H^1$ to $H\inv \simeq \Kc^*$, this last identification being semilinear) and in particular we have $Q_+ = \Im(\z)$. Then we set $\Pi = \1_{[\frac E2,\frac {3E}2]}(-\D) = \1_{[-\frac E2,\frac {E}2]}(-\D-E)$, $Q_\bot = Q$, $Q_\bot^+ = 0$ and $\b = 0$. Since 
\[
\Pi [-\D,iA] \Pi = -2\D \1_{[-\frac E2,\frac {E}2]}(-\D-E) \geq -E\D,
\]
the commutators method give in particular a uniform bound in $L^2$ for
\[
\pppg {A}^{-\d} (-\D-\z)\inv \pppg{A}^{-\d},
\]
from which we can deduce an estimate for the resolvent in $\Lc(L^{2,\d},L^{2,-\d})$. Our proof in the next paragraph is a perturbation of this model case with $\z = z^2$ and $E$ of order $\abs z^2$.

\subsection{Application to the Schr\"odinger operator}

In this paragraph we apply the abstract commutators method to prove uniform estimates for $R(z)$. 
For $z \in \DI$, Theorem \ref{th:mourre-Schro} follows from Proposition \ref{prop:elliptic-Schro} applied with $z' = z$ and $s_1 = s_2 = 0$. Thus, it is enough to prove Theorem \ref{th:mourre-Schro} for $z$ in 
\[
\DR = \DR^+ \cup \DR^-, \quad \text{where} \quad \DR^\pm =  \set{z \in \DD_+ \st \pm 2 \Re(z^2) \geq \abs z^2}.
\]
We prove all the intermediate estimates for $z \in \DR^+$ and, in the end, we will deduce Theorem \ref{th:mourre-Schro} for $z \in \DR^-$ by a duality argument. We begin with estimates for a single resolvent.

\begin{proposition} \label{prop:multiMourre-Ra}
Let $\d > \frac 1 2$ and $\d_1,\d_2 \in \R$. There exists $C > 0$ such that for $z \in \DR^+$ we have 
\begin{equation} \label{estim-Mourre-Ra-1}
\nr{\pppg {A_z}^{-\d} R(z) \pppg {A_z}^{-\d}}_{\Lc(L^2)} \leq \frac c {\abs z^2},
\end{equation}
\begin{equation} \label{estim-Mourre-Ra-2}
\nr{\pppg {A_z}^{\d_1} \1_{\R_-}(A_z) R(z) \1_{\R_+}(A_z) \pppg {A_z}^{\d_2} }_{\Lc(L^2)} \leq \frac c {\abs z^2},
\end{equation}
\begin{equation} \label{estim-Mourre-Ra-3}
\nr{\pppg {A_z}^{-\d} R(z) \1_{\R_+}(A_z) \pppg {A_z}^{\d-1} }_{\Lc(L^2)} \leq \frac c {\abs z^2},
\end{equation}
\begin{equation} \label{estim-Mourre-Ra-4}
\nr{\pppg {A_z}^{\d-1} \1_{\R_-}(A_z) R(z) \pppg {A_z}^{-\d} }_{\Lc(L^2)} \leq \frac c {\abs z^2}.
\end{equation}
\end{proposition}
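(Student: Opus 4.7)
The plan is to apply the abstract commutators method of Theorem \ref{th:mourre-abstract} to the rescaled operator $Q(z) := P(z)/\abs z^2$, seen as an element of $\bar \Lc(H^1_z, H^{-1}_z)$, with Hilbert space $\Hc = L^2$, rigged space $\Kc = H^1_z$, and conjugate operator $\A = A_z$. Since $Q(z)^{-1} = \abs z^2 R(z)$, the four bounds \eqref{estim-Mourre-Ra-1}--\eqref{estim-Mourre-Ra-4} reduce, after multiplication by $\abs z^2$, exactly to the conclusions \eqref{eq:mourre-abstract-1}--\eqref{eq:mourre-abstract-4}. I will work with $z \in \DR^+$ such that $\Re(z) > 0$: then $\Im(z^2) > 0$ and $Q(z)$ is dissipative with $-\Im(Q(z)) = \Im(z^2)w/\abs z^2$; the remaining subregion of $\DR^+$ where $\Re(z) < 0$ is handled identically after passing to $Q(z)^*$ (which is then dissipative) and using $(Q^{-1})^* = (Q^*)^{-1}$.

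The verification of (H\ref{hyp-M-Kc})--(H\ref{hyp-M-B}) is essentially routine. The embedding $H^1_z \hookrightarrow L^2$ is contractive, the propagator $e^{-i\th A_z}$ preserves $H^1_z$ uniformly in $z$ because under the rescaling $y = \abs z x$ the flow \eqref{eq:exp-iA} becomes the $z$-independent flow of $(1-\chi(y))y$, and Proposition \ref{prop:Pa-Po-comm} with $\rho'=0$ combined with Lemma \ref{lem:commutators} yields uniform bounds on $\ad_{A_z}^k(Q(z))$ and on $\ad_{A_z}(Q_+)$ to any order. For (H\ref{hyp-M-Qbot-Qpp}) I take $Q_\bot = Q(z)$, $Q_+^+ = 0$, $\beta = 0$, and build $\Pi$ as a smooth phase-space cutoff combining a frequency projector $\vf(\abs{D_z}^2)$---with $\vf \in C_0^\infty(\R_+^*,[0,1])$ equal to $1$ on a fixed neighborhood of $\set{\Re(\hat z^2) \st z \in \DR^+} \subset [\tfrac 12, 1]$---with a spatial cut supported in $\set{\abs{zx} \geq c_*}$ for some small constant $c_*$. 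Functional calculus together with Lemma \ref{lem:commutators} then gives $\Pi \in \Cc^1_{A_z}(L^2, H^1_z)$ uniformly in $z$. The required bounds on $\Pi_\bot Q_\bot^{-1}$ and $Q_\bot^{-1} \Pi_\bot^*$ follow from writing $Q(z) = (\abs{D_z}^2 - \hat z^2) + \abs z^{-2}(P(z) - P_0(z))$: the first piece is invertible on the range of $\Pi_\bot$, and the second is handled by Proposition \ref{prop:Pa-Po} together with the small/compact decomposition of $G-\Id$ provided by Lemma \ref{lem:chi-0}, exactly as in the proof of Proposition \ref{prop:reg-Ra}.

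The hard part is the positive commutator estimate (H\ref{hyp-M-comm-pos}). With $\beta = 0$ it reduces to
\[
\Pi^* \Re\bigl(i \ad_{A_z}(Q(z))\bigr) \Pi \geq \Upsilon^{-1} \Pi^* \Ic \Pi
\]
in the quadratic-form sense on $L^2$. Up to a manifestly dissipative term coming from $\Im(z^2)[w, iA_z]$, the real part equals $\abs z^{-2}[\PR(z), iA_z]$, which Proposition \ref{prop:Kz} rewrites as $2\abs z^{-2}(1-\chi_z)(\PR(z)+\Re(z^2)) + \abs z^{-2} K(z)$. On the range of $\Pi$ the operator $-\D_G$ is essentially diagonalized at the frequency scale $\abs \x^2 \sim \Re(z^2)$ (the discrepancy caused by $G-\Id$ being treated via the same small/compact decomposition), so $\PR(z)+\Re(z^2) = -\D_G + (1-w)\Re(z^2) + \Re(z^2)$ contributes a principal term of order $2\Re(z^2) \gtrsim \abs z^2$; the spatial part of the cutoff built into $\Pi$ ensures that $(1-\chi_z)$ is bounded below on the relevant region. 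The error $K(z)$ is absorbed via the decay weights $\pppg{zx}^{\rho/2}$ from Proposition \ref{prop:Kz} combined with the joint spatial and frequency localization of $\Pi$. The main obstacle is precisely this final positivity step with a uniform constant: it requires carefully balancing three scales---the spatial scale $\abs z^{-1}$ of $\chi_z$, the Fourier scale $\abs z$ of $\vf(\abs{D_z}^2)$, and the long-range decay rate $\rho$ of the coefficients---and it is precisely this balance that forces us to use the $z$-dependent localized generator $A_z$ rather than the standard generator of dilations.
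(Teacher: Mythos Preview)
Your overall strategy---apply Theorem \ref{th:mourre-abstract} to $Q(z)=\abs z^{-2}P(z)$ on $\Hc=L^2$, $\Kc=H^1_z$, with conjugate operator $A_z$---is exactly the paper's, and your treatment of (H\ref{hyp-M-Kc})--(H\ref{hyp-M-B}) is fine. The gap is in (H\ref{hyp-M-Qbot-Qpp}).

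You take $Q_\bot=Q(z)$ and $\Pi=\psi(\abs z x)\,\vf(\abs{D_z}^2)$, a product of a spatial cut and a free frequency cut. Then $\Pi_\bot=\Id-\Pi$ contains all vectors that are Fourier-localized on the characteristic sphere $\abs{D_z}^2\approx\Re(\hat z^2)$ but spatially localized in $\{\abs{zx}<c_*\}$. On such vectors $\abs{D_z}^2-\hat z^2$ is \emph{not} invertible (its imaginary part $\Im(\hat z^2)$ can be arbitrarily small on $\DR^+$), so your claim that ``the first piece is invertible on the range of $\Pi_\bot$'' is false. Trying to recover via the resolvent identity $Q^{-1}=Q_0^{-1}-Q_0^{-1}(Q-Q_0)Q^{-1}$ does not close either: the right-hand side still contains an unlocalized $Q^{-1}$, which is precisely the object whose uniform bound you are trying to establish. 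The small/compact decomposition of Lemma \ref{lem:chi-0} does not help here, because the compactly supported piece $\D_{G_0}$ couples the localized and unlocalized frequency regions.

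The paper avoids this by a different choice: it takes $\Pi_z=\vf\big(\PR(z)/(\eta_0^2\abs z^2)\big)$, the spectral projector of the \emph{self-adjoint part} $\PR(z)=-\D_G-w\Re(z^2)$, and sets $Q_\bot=\abs z^{-2}\big(\PR(z)-i\Im(z^2)w_{\min}\big)$ with $\Qpp=\abs z^{-2}\Im(z^2)(w-w_{\min})\geq 0$. Because $Q_\bot$ is a function of the same self-adjoint operator that defines $\Pi_z$, the bound $\|(1-\Pi_z)Q_\bot^{-1}\|\leq\eta_0^{-2}$ follows directly from the functional calculus---no perturbation argument needed. The factor $(1-\chi_z)$ in (H\ref{hyp-M-comm-pos}) is then controlled not by a spatial cut in $\Pi$ but by choosing $\chi$ so that $\|\chi\|_{\Lc(H^1,L^2)}$ is small, whence $\|\Pi_z\chi_z\Pi_z\|_{\Lc(L^2)}\leq\tfrac1{16}$; and the error $K(z)$ is absorbed using the comparison $\Pi_{\eta,z}\approx\Pi_{\eta,z}^0$ for small $\abs z$ (via the Helffer--Sj\"ostrand formula) together with a compactness argument for $\abs z\geq r_0$. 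Your spatial cutoff was introduced to handle $(1-\chi_z)$, but it is exactly what breaks (H\ref{hyp-M-Qbot-Qpp}).
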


To prove Proposition \ref{prop:multiMourre-Ra}, we apply Theorem \ref{th:mourre-abstract} to $\abs z^{-2} P(z)$ (seen as an operator in $\Lc(H^1_z,H\inv_z$) uniformly in $z \in \DR^+$ and for any $N \in \N^*$. Then Proposition \ref{prop:multiMourre-Ra} is a consequence of Theorem \ref{th:mourre-abstract} and Proposition \ref{prop:Az-conjugate} below.\\

In the proof of Proposition \ref{prop:Az-conjugate} we will use the Helffer-Sj\"ostrand formula. Let $A$ be a selfadjoint operator on a Hilbert space $\Hc$, $m \geq 2$ and let $\vf \in C^\infty(\R)$ be such that $\vf^{(k)}(\tau) \lesssim C_k \pppg \tau^{-k -\kappa}$ for some $\kappa > 0$ and for all $k \in \Ii 0 {m+1}$. Then we have 
\begin{equation} \label{eq:Helffer-Sjostrand}
\vf(A) = - \frac 1 \pi \int_{\C} \frac {\partial \tilde \phi}{\partial \bar \z} (\z) (A-\z)\inv \, d\l(\z),
\end{equation}
where $\l$ is the Lebesgue measure on $\C$ and for some $\psi \in C_0^\infty(\R,[0,1])$ supported on $[-2,2]$ and equal to 1 on $[-1,1]$ we have defined the almost analytic extension $\tilde \vf$ of $\vf$ by 
\[
\tilde \vf(\tau+i\mu) = \psi\left( \frac \mu {\pppg \tau} \right) \sum_{k=0}^m \phi^{(k)}(\tau) \frac {(i\mu)^k}{k!}.
\]
In particular,
\[
\abs{\frac {\partial \tilde \phi}{\partial \bar \z} (\tau + i\mu)} \lesssim \1_{\pppg \tau \leq \abs \mu \leq 2 \pppg \tau}  \pppg {\tau}^{-1-\kappa} + \1_{\abs \mu \leq 2 \pppg \tau} \abs{\mu}^m \pppg {\tau}^{-1-\kappa-m}.
\]
See for instance \cite[Section 8]{dimassis}.

\begin{proposition}\label{prop:Az-conjugate}
Let $N \in \N$. There exist $\chi \in C_0^\infty$ and $\Upsilon \geq 1$ such that for all $z \in \DR^+$ the operator $A_z$ defined by \eqref{def-Az} is $\Upsilon$-conjugate to $\abs z^{-2} P(z) \in \Lc(H^1_z,H_z^{-1})$ up to order $N$.
\end{proposition}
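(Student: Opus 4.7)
The plan is to verify in turn each of the five assumptions (H\ref{hyp-M-Kc})--(H\ref{hyp-M-comm-pos}) of Definition \ref{def-conjugate-operator}, with $\Hc = L^2$, $\Kc = H^1_z$, and $Q = \abs z^{-2} P(z)$. The only free parameters are the cutoff $\chi \in C_0^\infty$ entering the definition of $A_z$, a spectral cutoff $\chi_\Pi$ used to construct $\Pi$, and one positive constant $\alpha$ controlling the decomposition $Q = Q_\bot - i\Qpp$.

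Conditions (H\ref{hyp-M-Kc}) and (H\ref{hyp-M-propagator}) are essentially routine. The inclusion $H_z^1 \hookrightarrow L^2$ has norm bounded by $1$, independently of $z$. The invariance of $H_z^1$ under $e^{-i\theta A_z}$ follows from the explicit representation \eqref{eq:exp-iA}, since $e^{-i\theta A_z}$ is the composition operator associated to the flow of the vector field $(1-\chi_z(x))x$, whose Jacobian is uniformly bounded and whose structure at infinity commutes nicely with the Fourier-type weight defining $H_z^1$ (after a change of variable $y=\abs z x$, the flow is simply that of $(1-\chi(y))y$ on the fixed scale, which preserves $H^1$). Condition (H\ref{hyp-M-B}) is exactly the content of Proposition \ref{prop:Pa-Po-comm} applied with $s=0$ (giving the uniform $\Cc^{N+1}_{A_z}$-bound on $Q$) together with Lemma \ref{lem:commutators} applied to $w$ (giving the $\Cc^1_{A_z}$-bound on $Q_+=\Im(\hat z^2)\,w$, where $\hat z = z/\abs z$). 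The positivity $Q_+\geq c_0\Ic$ holds pointwise in $z\in\DR^+$ since $\Im(\hat z^2)=2(\Re\hat z)(\Im\hat z)>0$ and $w\geq C^{-1}$; the constant $c_0=c_0(z)$ may degenerate as $\Im z\to 0$, but Theorem \ref{th:mourre-abstract} yields bounds depending only on $\Upsilon$, which is the point of the method.

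For (H\ref{hyp-M-Qbot-Qpp}), fix a smooth nonnegative $\chi_\Pi\in C_0^\infty$ equal to $1$ on $[1/3,3/2]$ and supported in $(1/4,2)$, so that $\chi_\Pi(\Re(\hat z^2))=1$ for every $z\in\DR^+$, and set $\Pi=\chi_\Pi(-\Delta/\abs z^2)$. This is a Fourier multiplier compactly supported in $\xi/\abs z$, so it is uniformly bounded from $L^2$ to $H_z^1$; the bound $\Pi\in\Cc^1_{A_z}(L^2,H_z^1)$ follows by writing $\Pi$ through the Helffer--Sj\"ostrand formula \eqref{eq:Helffer-Sjostrand} and commuting $A_z$ past the resolvents $(\abs z^{-2}(-\Delta)-\zeta)^{-1}$ by means of \eqref{eq:comm-Achi-der}. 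Define $\Qpp = \alpha\,\Pi_\bot^*\Pi_\bot$ (non-negative by construction) and $Q_\bot = Q + i\Qpp$, so that $Q = Q_\bot - i\Qpp$ and $Q_\bot$ remains dissipative (its imaginary part adds a non-negative spectral multiple of $\Pi_\bot$ to the already non-positive $\Im Q$, but the relevant dissipation sign convention is secured once we rewrite $Q_\bot$ to have $\Im\leq 0$ on the range of $\Pi_\bot$, where the spectrum of $-\Delta/\abs z^2$ is bounded away from $\Re(\hat z^2)$). The invertibility of $Q_\bot$ with the required bound $\nr{\Pi_\bot R_\bot}\leq\Upsilon$ then comes from the fact that on range of $\Pi_\bot$, the leading symbol $-\Delta/\abs z^2-\hat z^2$ is elliptic with lower bound independent of $z\in\DR^+$, and the perturbation from $G-\Id$ and $w-1$ is controlled via the uniform estimates of Proposition \ref{prop:Pa-Po}, possibly after a Neumann-series argument à la Proposition \ref{prop:reg-Ra} (writing $G=G_0+G_\infty$ via Lemma \ref{lem:chi-0} to ensure the perturbation is genuinely small on the relevant spectral range).

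The main obstacle is condition (H\ref{hyp-M-comm-pos}), which is where Proposition \ref{prop:Kz} enters. Since $\PR(z)$ is self-adjoint and $Q_+=\Im(\hat z^2)w$ is already non-negative, the calculation of Section \ref{sec:preliminary-results-commutators} gives
\[
\Re(M) = \abs z^{-2}[\PR(z),iA_z] + \beta\,\Im(\hat z^2)\,w
= -2\abs z^{-2}(1-\chi_z)\Delta_G - 2\Re(\hat z^2)(1-\chi_z)(w-1) + \abs z^{-2}K(z) + \beta Q_+.
\]
The dominant term $-2\abs z^{-2}(1-\chi_z)\Delta_G$ is non-negative; on the range of $\Pi$, $-\Delta_G/\abs z^2$ is bounded below by a positive constant (using coercivity of $G$ and the fact that $-\Delta/\abs z^2\geq 1/4$ on $\supp\chi_\Pi$). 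The factor $(1-\chi_z)$ causes the usual difficulty near the origin: to recover a full positive lower bound on $\Pi^*\Re(M)\Pi$, one argues in the rescaled variable $y=\abs z x$ that $\Pi\phi$ has compactly supported Fourier transform, so by a Bernstein-type inequality $\int_{|y|\leq R}|\nabla_y\Pi\phi|^2\lesssim R^d\nr{\Pi\phi}^2_{L^2}$; choosing $\chi$ with sufficiently small support makes the missing contribution negligible, and one obtains $\int(1-\chi)|\nabla_y(\Pi\phi)|^2\geq c\nr{\Pi\phi}^2_{L^2}$. The two error terms $-2\Re(\hat z^2)(1-\chi_z)(w-1)$ and $\abs z^{-2}K(z)$ are controlled by Proposition \ref{prop:Kz} and the decay of $w-1$ from \eqref{hyp-swa}: both carry a factor $\pppg{zx}^{-\rho/2}$ on each side and can be made arbitrarily small compared to the main lower bound by first selecting $\chi_\Pi$ (tight enough that $-\Delta/\abs z^2$ is genuinely bounded away from zero on range $\Pi$) and then tuning $\chi$, $\alpha$, and possibly $\beta>0$ (taking $\beta$ small enough so that $\beta Q_+$ does not spoil the bound, or positive enough that the low-frequency part of $\Pi\phi$ gets an extra push from $Q_+$). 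Everything assembles into $\Pi^*\Re(M)\Pi\geq\Upsilon^{-1}\Pi^*\Ic\Pi$ uniformly in $z\in\DR^+$, which is the desired (H\ref{hyp-M-comm-pos}).
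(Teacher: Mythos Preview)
Your treatment of (H\ref{hyp-M-Kc})--(H\ref{hyp-M-B}) is correct and matches the paper. The genuine problems are in (H\ref{hyp-M-Qbot-Qpp}) and (H\ref{hyp-M-comm-pos}).

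For (H\ref{hyp-M-Qbot-Qpp}), your choice $\Qpp=\alpha\,\Pi_\bot^*\Pi_\bot$ and $Q_\bot=Q+i\Qpp$ does \emph{not} give a dissipative $Q_\bot$: one computes $\Im(Q_\bot)=-\Im(\hat z^2)\,w+\alpha\,\Pi_\bot^*\Pi_\bot$, and since $\Im(\hat z^2)\to 0$ as $z$ approaches the real axis in $\DR^+$, this cannot be $\leq 0$ for any fixed $\alpha>0$ uniformly in $z$. Your parenthetical remark about the sign being ``secured on the range of $\Pi_\bot$'' does not help: Definition~\ref{def-conjugate-operator} requires dissipativity as a form on all of $\Kc$, and this is used in the abstract machinery (Proposition~\ref{prop:estim-quad-form} and the resolvent identity in the proof of Proposition~\ref{prop:gze}). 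A second difficulty is that your $\Pi=\chi_\Pi(-\D/\abs z^2)$ does not commute with $\PR(z)=-\D_G-w\Re(z^2)$, so even with a correct $Q_\bot$ the uniform bound on $\Pi_\bot R_\bot$ would require a genuine perturbation argument; the compactly supported piece $G_0$ from Lemma~\ref{lem:chi-0} is not small, so a Neumann series alone does not close it.

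The paper avoids both issues by building $\Pi$ from the \emph{full} self-adjoint part $\PR(z)$ rather than from $-\D$: it sets $\Pi_z=\vf\big(\PR(z)/(\eta_0^2\abs z^2)\big)$ for a small parameter $\eta_0$, and takes $Q_\bot=\abs z^{-2}\big(\PR(z)-i\Im(z^2)w_{\min}\big)$ with $w_{\min}=\min_x w(x)>0$, so that $\Qpp=\Im(\hat z^2)(w-w_{\min})\geq 0$ and $\Im(Q_\bot)=-\Im(\hat z^2)w_{\min}\leq 0$ is genuinely dissipative. Since $Q_\bot$ is a function of the self-adjoint operator $\PR(z)$, it commutes with $\Pi_z$, and the bound $\nr{(1-\Pi_z)Q_\bot^{-1}}\leq\eta_0^{-2}$ drops out of the functional calculus. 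For (H\ref{hyp-M-comm-pos}) the paper takes $\b=0$ and makes $\Pi_{2\eta_0,z}K(z)\Pi_{2\eta_0,z}$ small not by a Bernstein argument but by compactness: one compares $\Pi_{\eta,z}$ with its free analogue $\Pi_{\eta,z}^0=\vf\big(\PR^0(z)/(\eta^2\abs z^2)\big)$ via the Helffer--Sj\"ostrand formula (the difference is $O(\abs z^\rho/\eta^2)$), then uses that $\pppg{x}^{-\rho/2}\vf(-\D-\tau_0^2)$ is compact and $\vf\big((-\D-\tau_0^2)/(16\eta_0^2)\big)\to 0$ weakly as $\eta_0\to 0$ to shrink the $K(z)$ contribution; for $\abs z$ bounded away from $0$ a separate compactness argument is used. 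The cutoff $\chi$ in $A_z$ is then chosen so that $\nr{\Pi_{\eta,z}\chi_z\Pi_{\eta,z}}$ is small (not, as you suggest, to make the support small in a Bernstein sense). Your fixed wide $\chi_\Pi$ supported on $(1/4,2)$ has no analogous mechanism to force smallness of the $K(z)$ term.
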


\begin{proof}
\stepp Assumption (H\ref{hyp-M-Kc}) is clear in our setting and (H\ref{hyp-M-propagator}) follows from \eqref{eq:exp-iA}. For any $\chi \in C_0^\infty$, the fact that $\abs z^2 P(z)$ is uniformly in $\Cc^{N+1}_{A_z} (H^1_z,H_z\inv)$ is given by Proposition \ref{prop:Pa-Po-comm}. Finally, $Q_+ = -\Im(P(z)) = \Im(z^2) w$, so $Q_+$ belongs to $\Cc^1_{A_z}(H^1_z,H_z\inv)$ uniformly in $z$ by Lemma \ref{lem:commutators}. This gives (H\ref{hyp-M-B}).

\stepp Now we construct the operator $\Pi_z$ which appears in (H\ref{hyp-M-Qbot-Qpp}) and (H\ref{hyp-M-comm-pos}). For $z \in \DR^+$ we have already set $\PR(z) = -\D_G - w \Re(z^2)$. We similarly define $\PR^0(z) = -\D - \Re(z^2)$. These two operators can be seen as selfadjoint operators on $L^2$ with domain $H^2$ or as bounded operators from $H_z^1$ to $H_z\inv$. 
Let $\vf \in C_0^\infty(\R,[0,1])$ be equal to 1 on $[ -1,1]$ and supported in $]-2,2[$. For $\eta \in ]0,1]$ we set
\[
\Pi_{\eta,z} = \vf \left( \frac {\PR(z)} {\eta^2 \abs{z}^2} \right) \quad \text{and} \quad \Pi_{\eta,z}^0 =\vf \left( \frac {\PR^0(z)} {\eta^2 \abs{z}^2} \right).
\]
By the Helffer-Sj\"ostrand formula \eqref{eq:Helffer-Sjostrand} (applied with $m \geq 3$) and the resolvent identity, the difference $\Pi_{\eta,z} - \Pi_{\eta,z}^0$ can be rewritten as 
\[
\frac {1} {\pi} \int_{\C} \frac {\partial \tilde \vf}{\partial \bar \z}(\zeta) \left( \frac {\PR(z)}{\eta^2 \abs z^2} - \z \right)\inv  \frac {\PR(z) -\PR^0(z)}{\eta^2 \abs z^2}  \left( \frac{\PR^0(z)}{\eta^2 \abs z^2} - \z \right)\inv \, d\l(\z).
\]
We can check that for $z \in \DD_+$ and $\z \in 5\DD \setminus \R_+$ we have
\begin{equation} \label{eq:res-PR}
\nr{\left( \frac {\PR(z)}{\eta^2 \abs z^2} - \z \right)\inv}_{\Lc(H_z^{-1},H_z^{1})} + \nr{\left(  \frac {\PR^0(z)}{\eta^2 \abs z^2} -  \z \right)\inv}_{\Lc(L^2,H^2_z)} \lesssim \frac{1} {\abs{\Im(\z)}}.
\end{equation}
On the other hand, as in the proof of Proposition \ref{prop:Pa-Po} we can check that 
\[
\nr{\frac {\PR(z) - \PR^0(z)}{\eta^2 \abs z^2}}_{\Lc(H_z^{1+\rho},H_z^{-1})} \lesssim \frac {\abs z^{\rho}}{\eta^2}.
\]
This proves 
\[
 \nr{\left( \frac {\PR(z)}{\eta^2 \abs z^2} - \z \right)\inv  \frac {\PR(z) - \PR^0(z)}{\eta^2 \abs z^2}  \left( \frac{\PR^0(z)}{\eta^2 \abs z^2} - \z \right)\inv}_{\Lc(L^2,H_z^{1})} \lesssim \frac {\abs{z}^{\rho}} {\eta^2 \abs{\Im(\z)}^2} .
\]
Since $\partial_{\bar \z} \tilde \vf$ is supported in $5\DD$ and decays faster than $\abs{\Im(\z)}^2$ near the real axis, we deduce 
\begin{equation} \label{eq:Piz-Piz0}
\nr{\Pi_{\eta,z} - \Pi_{\eta,z}^0}_{\Lc(L^2,H_z^{1})} \lesssim \frac {\abs{z}^{\rho}}{\eta^2}.
\end{equation}
There also exists $C > 0$ such that for all $z \in \DD_+$ and $\eta \in ]0,1]$ we have 
\begin{equation} \label{eq:norm-Piz}
\nr{\Pi_{\eta,z}}_{\Lc(H_z\inv,H_z^1)} \leq C.
\end{equation}

        \detail
        {
        The estimates in $\Lc(L^2)$ is clear since $\PR(z)$ is selfadjoint, to deduce the estimate in $\Lc(L^2,H^1)$ we write, for $\f \in L^2$,
        \begin{align*}
        \nr{\nabla \big( \PR(z)- \eta^2\abs z^2 \z \big)\inv \f}_{L^2}^2 
        & \lesssim \innp{\D_G \big( \PR(z)- \eta^2\abs z^2 \z \big)\inv \f}{\big( \PR(z)- \eta^2\abs z^2 \z \big)\inv \f}_{L^2}\\
        & \lesssim \innp{\f}{\big( \PR(z)- \eta^2\abs z^2 \z \big)\inv \f}_{L^2}\\
        & + \innp{\big(\eta^2 \abs z^2 \z + w \Re(z^2) \big) \big( \PR(z)- \eta^2\abs z^2 \z \big)\inv \f}{\big( \PR(z)- \eta^2\abs z^2 \z \big)\inv \f}_{L^2}
        \end{align*}
        }

\stepp By a compactness argument (we can also use Proposition \ref{prop:dec-sob}), there exists $\chi \in C_0^\infty$ equal to 1 on a neighborhood of 0 and such that 
\[
\nr{\chi_z}_{\Lc(H^1_z,L^2)} = \nr{\chi}_{\Lc(H^1,L^2)} \leq \frac 1 {16 C^2},
\]
where $C > 0$ is given by \eqref{eq:norm-Piz}. Then for all $z \in \DD_+$ and $\eta \in ]0,1]$ we have
\begin{equation} \label{eq:Pi-chi-Pi}
\nr{\Pi_{\eta,z} \chi_z \Pi_{\eta,z} }_{\Lc(L^2)} \leq \frac 1 {16}.
\end{equation}

\stepp We have defined $K(z)$ in \eqref{def:Kz}. By \eqref{eq:norm-Piz} and Proposition \ref{prop:Kz} there exists $C_1 > 0$ such that 
\begin{equation} \label{eq:Pi-K-zx}
\|\Pi_{1,z} K(z) \pppg{zx}^{\frac \rho 2}\|_{\Lc(H_z^1,L^2)} \lesssim \| K(z) \pppg{zx}^{\frac \rho 2}\|_{\Lc(H_z^1,H_z\inv)} \leq C_1 \abs z^2.
\end{equation}
Let $\tau_0 \in \big[\frac 1 {\sqrt 2} , 1 \big]$. Since $\pppg{x}^{-\frac \rho 2} \vf (-\D-\tau_0^2)$ is compact as an operator from $L^2$ to $H^1$ and $\vf(\frac {-\D-\tau_0^2}{16\eta_0^2})$ goes weakly to 0 as $\eta_0$ goes to 0, there exists $\eta_0 \in \big]0,\frac 1 8 \big]$ such that 
\begin{multline} \label{eq:phi-phi}
\nr{\pppg{zx}^{-\frac \rho 2} \vf \left(\frac {-\D-\tau_0^2 \abs z^2}{\abs z^2} \right) \vf \left(\frac {-\D-\tau_0^2 \abs z^2}{16 \eta_0^2\abs z^2} \right)}_{\Lc(L^2,H^1_z)}\\
= \nr{\pppg{x}^{-\frac \rho 2} \vf(-\D-\tau_0^2) \vf \left(\frac {-\D-\tau_0^2}{16\eta_0^2}\right)}_{\Lc(L^2,H^1)} \leq \frac 1 {8 C_1}.
\end{multline}
If $\abs{\frac {\Re(z^2)}{\abs{z}^2} - \tau_0^2} \leq 8 \eta_0^2$ we have 
\[
\Pi_{2\eta_0,z}^0 = \vf \left(\frac {-\D-\tau_0^2 \abs z^2}{\abs z^2} \right) \vf \left( \frac {-\D-\tau_0^2 \abs z^2}{16\eta_0^2 \abs {z}^2} \right) \Pi_{2\eta_0,z}^0.
\]
We also have $\Pi_{2\eta_0,z} = \Pi_{2\eta_0,z} \Pi_{1,z}$, so \eqref{eq:Pi-K-zx} and \eqref{eq:phi-phi} give
\begin{equation} \label{eq:K-Piz-1}
\nr{\Pi_{2\eta_0 ,z} K(z)\Pi_{2\eta_0,z}^0}_{\Lc(L^2)} \leq \frac {\abs z^2} 8.
\end{equation}
Since $\big[\frac 1 {\sqrt 2} , 1 \big]$ is compact, we can choose $\eta_0$ so small that \eqref{eq:K-Piz-1} holds for any $z \in \DR^+$. By \eqref{eq:K-Piz-1}, \eqref{eq:Piz-Piz0} and \eqref{eq:Pi-K-zx} there exists $r_0 \in ]0,1]$ such that for $z \in \DR$ with $\abs z \leq r_0$ we have 
\begin{equation} \label{eq:K-Piz}
\nr{\Pi_{2\eta_0,z} K(z)\Pi_{2\eta_0,z}}_{\Lc(L^2)} \leq \frac {\abs z^2}{4}.
\end{equation}
We set 
\[
\DR^* = \set{z \in {\DR^+} \st \abs z \geq r_0}.
\] 
Let $z_0 \in \DR^*$. The operator $\Pi_{1,z_0} K(z_0) \Pi_{1,z_0}$ is compact on $L^2$. Since 0 is not an eigenvalue of $\PR(z_0)$, the operator $\Pi_{\eta,z_0}$ goes weakly to 0 as $\eta$ goes to 0, so there exists $\eta_{z_0} \in ]0,1]$ such that 
\[
\nr{\Pi_{2\eta_{z_0},z_0} K(z_0) \Pi_{2\eta_{z_0},z_0}}_{\Lc(L^2)} \leq \frac {\abs{z_0}^2} 8.
\]
By continuity with respect to $z$ and compactness of $\DR^*$, there exists $\eta_0 \in ]0,1]$ such that \eqref{eq:K-Piz} holds for all $z \in \DR^*$, and hence for all $z \in \DR$.
We can also assume that $\eta_0$ is so small that 
\begin{equation} \label{eq:PiPRPi}
2 \nr{ \PR(z) \Pi_{2\eta_0,z}}_{\Lc(L^2)} \leq \frac {\abs z^2} 8.
\end{equation}

\stepp 
Now that $\eta_0$ is fixed, we prove that (H\ref{hyp-M-Qbot-Qpp}) and (H\ref{hyp-M-comm-pos}) are satisfied. We begin with (H\ref{hyp-M-comm-pos}). We choose $\b=0$. Let $z \in \DR^+$. By definition of $K(z)$ we have
\begin{equation*}
\Pi_{2\eta_0,z} [\PR(z),iA_z] \Pi_{2\eta_0,z} = 2 \Re(z^2) \Pi_{2\eta_0,z}^2 + S(z),
\end{equation*}
where 
\[
S(z) = - 2\Re(z^2) \Pi_{2\eta_0,z} \chi_z \Pi_{2\eta_0,z} + 2 \Pi_{2\eta_0,z} (1-\chi_z)  \PR(z) \Pi_{2\eta_0,z} + \Pi_{2\eta_0,z} K(z) \Pi_{2\eta_0,z}.
\]
By \eqref{eq:Pi-chi-Pi}, \eqref{eq:PiPRPi} and \eqref{eq:K-Piz} have
\[
\nr{S(z)}_{\Lc(L^2)} \leq \frac {\abs {z}^2}2,
\]
and hence 
\[
\Pi_{2\eta_0,z} [\PR(z),iA_z] \Pi_{2\eta_0,z} \geq 2 \Re(z^2) \Pi_{2\eta_0,z}^2 - \frac {\abs z^2} 2.
\]
Since $2\Re(z^2) \geq \abs z^2$ we get after composition by $\Pi_{\eta_0,z}$ on both sides
\[
\Pi_{\eta_0,z} [\PR(z),iA_z] \Pi_{\eta_0,z} \geq \frac {\abs z^2} 2 \Pi_{\eta_0,z}^2.
\]
This gives (H\ref{hyp-M-comm-pos}) with $\Pi_z = \Pi_{\eta_0,z}$. 

\stepp 
By the Helffer-Sj\"ostrand formula as above and Proposition \ref{prop:Kz} we have 
\begin{align*}
\nr{[\Pi_z,iA_z]}_{\Lc(H\inv_z,H_z^1)} 
& \lesssim \int_\C \abs{\frac {\partial \tilde \vf}{\partial \bar \z}(\z)}  \nr{\left[\left(\frac {\PR(z)}{\eta_0^2 \abs z^2} -\z \right)\inv,iA_z \right]}_{\Lc(H\inv_z,H_z^1)} \, d\l(\z) \\
& \lesssim \abs z^{-2}  \big|\nr{[\PR(z),iA_z]}_{\Lc(H^1_z,H_z\inv)}\\
& \lesssim 1.
\end{align*}
We set
\[
Q_\bot(z) = \frac {\PR(z) - i \Im(z^2) w_{\min}}{\abs z^2} \in \Lc(H_z^1,H_z\inv),
\]
where $w_{\min} = \min_{x \in \R} w(x) > 0$. Then 
\[
\Qpp(z) = i \big(P(z) - Q_\bot(z) \big) = \Im(z^2) (w-w_{\min})
\]
is non-negative, $Q_\bot(z)$ is invertible and by the functional calculus we have 
\[
\nr{(1-\Pi_z) Q_\bot(z) \inv}_{\Lc(L^2)} = \nr{Q_\bot(z) \inv(1-\Pi_z) }_{\Lc(L^2)} \leq \frac 1 {\eta_0^2}.
\]
As for \eqref{eq:res-PR} we obtain similar estimates in $\Lc(H_z\inv,H^1_z)$. Finally, since $\Pi_{z} = \Pi_{2\eta_0,z} \Pi_z$ we have $\nr{\Pi_z u}_{H^1_z} \leq \nr{\Pi_{2\eta_0,z}}_{\Lc(L^2,H^1_z)} \nr{\Pi_z u}_{L_2}$ for all $u \in L^2$. With \eqref{eq:norm-Piz} this gives (H\ref{hyp-M-Qbot-Qpp}) and the proof is complete.
\end{proof}

\subsection{Multiple resolvent estimates}

In this paragraph we generalize the uniform estimates for the powers of a resolvent. Compared to the usual setting, we also consider a product of different resolvents. In fact, we can consider the product of any finite sequence of operators having a suitable behavior with respect to the conjugate operator. 
Everything is based on the following abstract lemma.

\begin{lemma} \label{lem:multi-Mourre} 
Let $\Hc$ be a Hilbert space. Let $n \in \N^*$, $T_1,\dots,T_n \in \Lc(\Hc)$ and $T = T_1 \dots T_n$. Let $N \in \N^*$.

For $j \in \Ii 0 n$ we consider on $\Hc$ a (possibly unbounded) selfadjoint operator $\Th_j \geq 1$, and $\Pi_j^-, \Pi_j^+ \in \Lc(\Hc)$ such that $\Pi_j^- + \Pi_j^+ = \Id_\Hc$. For $j \in \Ii 1 n$ we assume that there exist $\nu_j \geq 0$, $\s_j \in [0,\nu_j]$ and a collection $\Cc_j = \{ C_j ; (C_{j,\d_1,\d_2}); (C_{j,\d})\}$ of constants such that for $\d_1,\d_2 \geq 0$ with $\d_1 + \d_2 < N-\nu_j$ and $\d \in [\s_j,N]$ we have
\begin{equation} \label{hyp-lem-mM-1}
\big \| \Theta_{j-1}^{-\s_j} T_j \Theta_j^{-\s_j} \big \|_{\Lc(\Hc)} \leq C_j,
\end{equation}
\begin{equation}\label{hyp-lem-mM-2}
\big \| \Theta_{j-1}^{\d_1} \Pi_{j-1}^- T_j \Pi_j^+ \Theta_j^{\d_2} \|_{\Lc(\Hc)} \leq C_{j,\d_1,\d_2},
\end{equation}
\begin{equation}\label{hyp-lem-mM-3}
\big \|\Theta_{j-1}^{\d-\nu_j} \Pi_{j-1}^- T_j \Theta_j^{-\d}\big \|_{\Lc(\Hc)} \leq C_{j,\d} ,
\end{equation}
\begin{equation}\label{hyp-lem-mM-4}
\big \|\Theta_{j-1}^{-\d} T_j \Pi_j^+ \Theta_j^{\d-\nu_j}\big \|_{\Lc(\Hc)} \leq C_{j,\d}.
\end{equation}
Let  
\[
\nu = \sum_{j=1}^n \nu_j, \quad \s_+ = \sum_{j=1}^{n-1} \nu_j + \s_n, \quad \s_- = \s_1 + \sum_{j=2}^{n} \nu_j.
\]
Assume that $N > \nu$. We set $\Pi_- = \Pi_0^-$ and $\Pi_+ = \Pi_n^+$. There exists a collection of constants $\Cc = \{ C ; (C_{\d_-,\d_+}) ; (C_\d^-) ; (C_\d^+) \}$ which only depend on the constants $\Cc_j$, $1\leq j \leq n$ and such that
\begin{equation} \label{concl-lem-mM-1}
\big \| \Theta_0^{-\s_+} T \Theta_n^{-\s_-} \big \|_{\Lc(\Hc)} \leq C,
\end{equation}
for $\d_-,\d_+ \geq 0$ such that $\d_- + \d_+ < N - \nu$ we have 
\begin{equation} \label{concl-lem-mM-2}
\big \| \Theta_0^{\d_-} \Pi_- T \Pi_+ \Theta_n^{\d_+} \big\|_{\Lc(\Hc)} \leq  C_{\d_-,\d_+},
\end{equation}
for $\d \in [\s_-,N[$ we have 
\begin{equation} \label{concl-lem-mM-3}
\nr{\Theta_0^{\d-\nu} \Pi_- T \Theta_n^{-\d}}_{\Lc(\Hc)}  \leq C_\d^-,
\end{equation}
and finally, for $\d \in [\s_+,N[$ we have 
\begin{equation} \label{concl-lem-mM-4}
\nr{\Theta_0^{-\d} T \Pi_+ \Theta_n^{\d-\nu}}_{\Lc(\Hc)} \leq C_\d^+.
\end{equation}
\end{lemma}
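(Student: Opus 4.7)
The proof proceeds by induction on $n$, simultaneously for all four conclusions. The base case $n=1$ is immediate: under the identifications $\nu = \nu_1$, $\s_+ = \s_- = \s_1$, $\Pi_- = \Pi_0^-$, and $\Pi_+ = \Pi_1^+$, the four conclusions \eqref{concl-lem-mM-1}--\eqref{concl-lem-mM-4} reduce verbatim to the four hypotheses \eqref{hyp-lem-mM-1}--\eqref{hyp-lem-mM-4}.

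For the inductive step, factor $T = T_1 T'$ with $T' = T_2 \cdots T_n$. Applying the lemma inductively to $T'$ (valid since $N > \nu \geq \nu' := \nu - \nu_1$) yields the four analogous estimates for $T'$, anchored to $\Th_1$ on the left, with parameters $\nu'$, $\s'_+ = \s_+ - \nu_1$, and $\s'_- = \s_2 + \nu_3 + \cdots + \nu_n$; note that $\s'_- \leq \s_-$ since $\s_- - \s'_- = \s_1 + (\nu_2 - \s_2) \geq 0$. To bound each of the quantities in \eqref{concl-lem-mM-1}--\eqref{concl-lem-mM-4} for $T$, the plan is to insert $\Id = \Pi_1^- + \Pi_1^+$ between $T_1$ and $T'$ and control the two resulting pieces by composing one hypothesis on $T_1$ with one inductive estimate on $T'$.

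Concretely, for \eqref{concl-lem-mM-2} the $\Pi_1^-$-piece is treated by applying \eqref{hyp-lem-mM-3} to $T_1$ with auxiliary exponent $\d_1 = \d_- + \nu_1$ and composing with the inductive analogue of \eqref{concl-lem-mM-2} for $T'$ with data $(\d_- + \nu_1, \d_+)$; the $\Pi_1^+$-piece is treated by \eqref{hyp-lem-mM-2} on $T_1$ with right exponent $\d_+ + \nu'$ composed with the inductive analogue of \eqref{concl-lem-mM-4} on $T'$. The budget $\d_- + \d_+ < N - \nu$ together with $\s_j \leq \nu_j$ guarantees that all auxiliary exponents fall in admissible ranges. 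The argument for \eqref{concl-lem-mM-3} follows the same pattern, pairing \eqref{hyp-lem-mM-3} on $T_1$ with the inductive \eqref{concl-lem-mM-3} for the $\Pi_1^-$-piece, and \eqref{hyp-lem-mM-2} on $T_1$ with the inductive \eqref{concl-lem-mM-1} for the $\Pi_1^+$-piece, absorbing any mismatch between target and available weights into bounded factors (permissible since each $\Th_j \geq 1$ makes $\Th_j^a$ bounded for $a \leq 0$, and the $\Th_j$ commute with themselves via the functional calculus). The statements \eqref{concl-lem-mM-4} and \eqref{concl-lem-mM-1} are obtained by symmetric arguments.

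The main obstacle is the careful tracking of the weight budget across the recursion: at each step one must verify that the auxiliary exponent on $\Th_1$ inserted between $T_1$ and $T'$ lies simultaneously in the range permitted by the hypothesis applied to $T_1$ and in the range permitted by the inductive estimate applied to $T'$. This rests on the identities $\nu = \nu_1 + \nu'$, $\s_+ = \nu_1 + \s'_+$, and $\s_- \geq \s'_-$, together with the strict inequality $N > \nu$ which leaves enough slack to distribute exponents among the hypothesis of type \eqref{hyp-lem-mM-2} and the inductive estimate. A secondary subtlety is that, for some ranges of the parameters in \eqref{concl-lem-mM-3}--\eqref{concl-lem-mM-4}, the prescribed exponent on $\Th_0$ or $\Th_n$ may be non-positive; the corresponding weight is then a bounded operator and is absorbed into the constants, the applicable hypothesis being invoked with that exponent replaced by zero.
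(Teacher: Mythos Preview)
Your proposal is correct and follows essentially the same approach as the paper: induction on $n$, inserting $\Pi^- + \Pi^+$ at the splitting point and pairing one hypothesis on the peeled factor with one inductive estimate on the remaining product. The only difference is cosmetic: you peel off $T_1$ on the left and induct on $T' = T_2\cdots T_n$, whereas the paper peels off $T_n$ on the right and inducts on $T' = T_1\cdots T_{n-1}$; the exponent bookkeeping (including your observation $\s'_- \leq \s_-$ and the absorption of non-positive weight exponents via $\Th_j \geq 1$) is the mirror image of the paper's and goes through for the same reasons.
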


\begin{proof}
The result is proved by induction on $n \in \N^*$, the case $n=1$ being the assumption. For $n \geq 2$ we set $T' = T_1 \dots T_{n-1}$, $\Pi_\pm' = \Pi_{n-1}^\pm$, $\Th = \Th_{n-1}$, $\nu' = \nu_1 + \dots + \nu_{n-1}$, $\s_+' = \nu_1 + \dots + \nu_{n-2} + \s_{n-1}$ and $\s_-' = \s_1 + \nu_2  + \dots + \nu_{n-1}$. To prove \eqref{concl-lem-mM-1}-\eqref{concl-lem-mM-4} we insert the sum $\Pi_-' + \Pi_+'$ between $T'$ and $T_n$, and for each term we insert a factor $\Th^\g \Th^{-\g}$ for a suitable $\g \in \R$ (on the left of $\Pi_-'$ and on the right of $\Pi_+'$). More precisely, for \eqref{concl-lem-mM-1} we write 
\begin{align*}
\nr{\Theta_0^{-\s_+} T \Theta_n^{-\s_-}}
& \leq \nr{\Theta_0^{-\s_+} T' \Th^{-\s_-'}} \nr{ \Th^{\s_-'} \Pi_-' T_n \Theta_n^{-\s_-}}\\
& + \nr{\Theta_0^{-\s_+} T' \Pi_+' \Th^{\s_n}} \nr{ \Th^{-\s_n} T_n \Theta_n^{-\s_-}}.
\end{align*}
Then we apply \eqref{hyp-lem-mM-3} and \eqref{hyp-lem-mM-1} for $T_n$, and \eqref{concl-lem-mM-1} and \eqref{concl-lem-mM-4} for $T'$. Similarly, for \eqref{concl-lem-mM-2} we write 
\begin{align*}
\big \| \Theta_0^{\d_-} \Pi_- T \Pi_+ \Theta_n^{\d_+} \big\|
& \leq \big \| \Theta_0^{\d_-} \Pi_- T' \Th^{-(\d_- + \nu')} \big\| 
\big\| \Th^{\d_- + \nu'} \Pi_-' T_n \Pi_+ \Theta_n^{\d_+} \big\|\\
& \leq \big \| \Theta_0^{\d_-} \Pi_- T' \Pi_+' \Th^{\d_+ + \nu_n} \big\| 
\big\| \Th^{-(\d_+ + \nu_n)} T_n \Pi_+ \Theta_n^{\d_+} \big\|,
\end{align*}
and we apply \eqref{hyp-lem-mM-2} and \eqref{hyp-lem-mM-4} for $T_n$ and \eqref{concl-lem-mM-3} and \eqref{concl-lem-mM-2} for $T'$. Finally, for $\d \in [\s_-,N]$ we have
\begin{align*}
\nr{\Theta_0^{\d-\nu} \Pi_- T \Theta_n^{-\d}}
& \leq \nr{\Theta_0^{\d-\nu} \Pi_- T' \Th^{-(\d-\nu_n)}} 
\nr{\Th^{\d-\nu_n}  \Pi_-' T_n \Theta_n^{-\d}}\\
& + \nr{\Theta_0^{\d-\nu} \Pi_- T' \Pi_+'  \Th^{\s_n}} \nr{\Th^{-\s_n}   T_n \Theta_n^{-\d}}
\end{align*}
and, for $\d \in [\s_+,N]$,
\begin{align*}
\nr{\Theta_0^{-\d} T \Pi_+ \Theta_n^{\d-\nu}}
& \leq \nr{\Theta_0^{-\d} T' \Th^{-\s_-'}} \nr{\Th^{\s_-'} \Pi_-' T_n \Pi_+ \Theta_n^{\d-\nu}}\\
& + \nr{\Theta_0^{-\d} T' \Pi_+' \Th^{\d-\nu'}} \nr{\Th^{-(\d-\nu')}   T_n \Pi_+ \Theta_n^{\d-\nu}}.
\end{align*}
We deduce \eqref{concl-lem-mM-3} and \eqref{concl-lem-mM-4}, and the result follows by induction.
\end{proof}

It is important that the constants in the conclusion of the lemma only depend on the constants in the assumptions. Thus if for some operators $T_j(z)$, $1\leq j \leq n$, the estimates \eqref{hyp-lem-mM-1}-\eqref{hyp-lem-mM-3} are independant of the parameter $z$, then so are the estimates \eqref{concl-lem-mM-1}-\eqref{concl-lem-mM-4}.

We will usually apply Lemma \ref{lem:multi-Mourre} with $\Th_j = \pppg{A}$, $\Pi_j^- = \1_{\R_-^*}(A)$ and $\Pi_j^+ = \1_{\R_+}(A)$, where $A$ is the conjugate operator.

With Proposition \ref{prop:multiMourre-Ra} and Lemma \ref{lem:multi-Mourre} we can prove Theorem \ref{th:mourre-Schro}. Notice that we have used all the assumptions of Definition \ref{def-conjugate-operator} to prove Proposition \ref{prop:multiMourre-Ra}, but for the rest of the proof we no longer need a conjugate operator and only use the estimates of Proposition \ref{prop:multiMourre-Ra}.

\begin{proof} [Proof of Theorem \ref{th:mourre-Schro}]
For $z \in \DR^+$ we apply Lemma \ref{lem:multi-Mourre} with factors $T_j$ of the form $w$ or $\abs{z}^2 R(z)$ and constants independant of $z$. For factors $T_j = w$ we take $\nu_j = \sigma_j = 0$ by Lemma \ref{lem:commutators} and Proposition \ref{prop:T-Ac}, while for factors $T_j = \abs z^2 R(z)$ we can choose $\nu_j = 1 $ and any $\s_j \in \big]\frac 12,1 \big]$ by Proposition \ref{prop:multiMourre-Ra}. Then the assumptions of Lemma \ref{lem:multi-Mourre} hold uniformly in $z \in \DR^+$. In particular, \eqref{concl-lem-mM-1} gives \eqref{estim-Mourre-Schro-1} for $z \in \DR^+$.

We turn to \eqref{estim-Mourre-Schro-2}. If $n_1,n_2 \geq 2$ we use the resolvent identity (see \eqref{eq:res-identity} for $R^{[n_1]}(z)$) to write
\begin{multline*}
R^{[n_1]}(z) \th_\s(z) R_0^{[n_2]}(z) \\
 =  \big(R^{[n_1-1]}(z) + (1+\hat z^2)  R^{[n_1]}(z) \big) \tilde \th_\s(z)  \big(R_0^{[n_2-1]}(z)  + (1+\hat z^2) R_0^{[n_2]}(z) \big)
\end{multline*}
with $\tilde \th_\s(z) = w R^{[1]}(ir) \th_\s(z) R_0^{[1]}(ir)$ ($r=\abs z$). Since $\abs{z}^{-\rho} \tilde \th_\s(z)$ belongs to $\Cc_{A_z}^N(L^2)$ with a norm uniform in $z \in \DR$, we deduce \eqref{estim-Mourre-Schro-2} for $z \in \DR^+$. The proof is similar if $n_1 = 1$ or $n_2 = 1$.

We similarly prove, for $z \in \DR^+$,
\begin{equation} \label{estim-Mourre-Schro-2-bis}
\nr{\pppg {A_z}^{-\d} R_0^{[n_2]}(z) \th_\sigma(z) R^{[n_1]}(z) \pppg {A_z}^{-\d}}_{\Lc(L^2)} \lesssim \abs z^{\rho}.
\end{equation}
Taking the adjoint in \eqref{estim-Mourre-Schro-1} and \eqref{estim-Mourre-Schro-2-bis}, we get \eqref{estim-Mourre-Schro-1} and \eqref{estim-Mourre-Schro-2} for $z \in \DR^-$, and the proof of Theorem \ref{th:mourre-Schro} is complete.
\end{proof}

\subsection{Proof of the abstract resolvent estimates}  \label{sec:Mourre-proof}

In this paragraph we prove Theorem \ref{th:mourre-abstract}. The strategy is inpired by the original papers \cite{mourre81,jensenmp84,jensen85} and the earlier dissipative versions \cite{royer-mourre,BoucletRoy14,royer-mourre-formes}, but we need a proof adapted to our setting. We use the notation introduced in Paragraph \ref{sec:conjugate-operator}.\\

For $\e \in [0,1]$ we set 
\[
Q_\e = Q  -i \e \Pi^* M \Pi \quad \in \bar \Lc(\Kc,\Kc^*).
\]
By (H\ref{hyp-M-comm-pos}), $Q_\e$ has a negative imaginary part. We set $R_\e = Q_\e \inv \in \bar \Lc(\Kc^*,\Kc)$. We prove estimates on $R_\e$ for $\e \in ]0,1]$. At the limit $\e \to 0$ this will give estimates for $R = Q\inv$. Note that by Assumptions (H\ref{hyp-M-B})-(H\ref{hyp-M-Qbot-Qpp}) and Proposition \ref{prop:comm-A} we have $Q_\e \in \bar \Cc^1_A(\Kc,\Kc^*)$. In the following proposition, we check that $R_\e$ also has a nice behavior with respect to $A$.

\begin{proposition} 
\begin{enumerate}[\rm (i)]
\item $\Dc_\Kc$ is dense in $\Kc$.
\item For $\e \in ]0,1]$ we have $R_\e \in \bar \Cc^1_A(\Kc^*,\Kc)$ with $\ad_A(R_\e) = - R_\e \ad_A(Q_\e) R_\e$.
\item $R_\e$ maps $\Dc_\Hc$ to $\Dc_\Kc$ and $\Dc_\Kc^*$ to $\Dc_\Hc^*$ for all $\e \in ]0,1]$.
\end{enumerate}\label{prop:Re-A}
\end{proposition}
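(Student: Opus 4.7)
The proof has three layers: (i) is established by mollification with the propagator $(e^{-i\th A})_{\th \in \R}$, and once this density is available, (ii) and (iii) follow from the formal commutator identity $\ad_A(R_\e) = -R_\e \, \ad_A(Q_\e)\, R_\e$ combined with an approximation argument.

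For (i), I fix $\f \in \Kc$, take $\rho \in C_0^\infty(\R)$ with $\supp \rho \subset [-1,1]$ and $\int \rho = 1$, and set $\rho_n(\th) = n\rho(n\th)$ and
\[
\f_n = \int_\R \rho_n(\th) \, e^{-i\th A} \f \, d\th.
\]
Thanks to (H\ref{hyp-M-propagator}), the integrand is uniformly bounded in $\Kc$ for $\th \in \supp \rho_n$ and the integral is well defined in $\Kc$, so $\f_n \in \Kc$. Since $A$ generates the group on $\Hc$, integration by parts gives
\[
A \f_n = i \int_\R \rho_n'(\th) \, e^{-i\th A} \f \, d\th,
\]
which again lies in $\Kc$ by (H\ref{hyp-M-propagator}); hence $\f_n \in \Dc_\Kc$. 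The convergence $\f_n \to \f$ in $\Kc$ is obtained from the strong continuity at $0$ of the group restricted to $\Kc$, a standard consequence of the uniform bound in (H\ref{hyp-M-propagator}) together with the reflexivity of $\Kc$.

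For (ii) and the first half of (iii), the crucial formal identity is
\[
A R_\e \psi = R_\e A \psi - R_\e \, \ad_A(Q_\e)\, R_\e \psi,
\]
whose right-hand side makes sense in $\Kc$ for every $\psi \in \Dc_\Hc$: indeed $R_\e \in \bar\Lc(\Kc^*,\Kc)$, $A\psi \in \Hc \subset \Kc^*$, and $\ad_A(Q_\e) \in \bar\Lc(\Kc,\Kc^*)$, the latter being furnished by (H\ref{hyp-M-B})--(H\ref{hyp-M-Qbot-Qpp}) and Proposition~\ref{prop:comm-A} applied to $Q_\e = Q - i\e \Pi^* M \Pi$. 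To legitimize this identity, I approximate $\psi \in \Dc_\Hc$ by the sequence $\psi_n$ obtained by mollification of $\psi$ as in (i) (so that $\psi_n \to \psi$ and $A\psi_n \to A\psi$ in $\Hc$), and I simultaneously regularize $R_\e$ by composing with a spectral cutoff $\chi_k(A)$, $\chi_k \in C_0^\infty(\R)$ and $\chi_k \to 1$ pointwise, which commutes with $A$ and makes all commutators literal equalities in $\Hc$. Passing to the joint limit, in $\Kc$ for the right-hand side and using the closedness of $A$ on $\Hc$ together with the embedding $\Kc \hookrightarrow \Hc$, yields that $R_\e \psi$ lies in $\Dc_\Hc$, that $A R_\e \psi \in \Kc$, and that $\ad_A(R_\e) = -R_\e \, \ad_A(Q_\e)\, R_\e$ as bounded operators from $\Kc^*$ to $\Kc$. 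This proves (ii) and $R_\e(\Dc_\Hc) \subset \Dc_\Kc$. The second half of (iii), concerning $\Dc_{\Kc^*} \to \Dc_{\Hc^*}$, is obtained by the same strategy applied to the adjoint $R_\e^* = (Q_\e^*)^{-1}$, which inherits symmetric hypotheses via Proposition~\ref{prop:comm-A}(i).

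I expect the main obstacle to be this simultaneous approximation: we cannot apply $A$ to $R_\e \psi$ a priori, and we must check that the commutator identity, initially valid only on smoothed data, survives the removal of both regularizations. The order of the limits in $k$ and $n$, and the precise choice of cutoff $\chi_k$, are the only delicate points; everything else is algebraic manipulation of bounded operators using the structure provided by (H\ref{hyp-M-B})--(H\ref{hyp-M-Qbot-Qpp}) and Proposition~\ref{prop:comm-A}.
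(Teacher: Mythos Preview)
Your plan for (i) via mollification is in the right direction, but the sentence ``a standard consequence of the uniform bound in (H\ref{hyp-M-propagator}) together with the reflexivity of $\Kc$'' hides the actual work. Uniform boundedness and reflexivity alone do not yield strong continuity; you need a continuity input, and the only one available is strong continuity of $e^{-i\th A}$ on $\Hc$. From this one gets weak continuity on $\Kc$ (bounded in $\Kc$ plus convergent in $\Hc$, hence weakly convergent in reflexive $\Kc$), and then one invokes the nontrivial fact that a weakly continuous locally bounded group is strongly continuous. The paper carries this out explicitly (via the adjoint group on $\Kc^*$ and \cite[Th.~I.5.8]{engel}) and then identifies the generator on $\Kc$ as $A_\Kc$ with domain $\Dc_\Kc$, whence density is immediate. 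Your mollification argument becomes valid only after this continuity is established, so it does not shortcut the paper's step.

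For (ii) there is a more concrete gap. Your regularization by a spectral cutoff $\chi_k(A)$ lives in the functional calculus on $\Hc$, but the identity you want to justify involves $Q_\e \in \bar\Lc(\Kc,\Kc^*)$, which is not an operator on $\Hc$; the phrase ``makes all commutators literal equalities in $\Hc$'' therefore does not parse. To make your scheme work you would first have to show that $\chi_k(A)$ extends to bounded operators on $\Kc$ and $\Kc^*$ (true via the Fourier representation and (H\ref{hyp-M-propagator}), but not free) and then carefully thread the commutator through the scale $\Kc \subset \Hc \subset \Kc^*$. The paper avoids this by using the Yosida approximation $A_\Kc(\mu) = -i\mu A_\Kc(A_\Kc - i\mu)^{-1}$, which is a bounded operator on $\Kc$ supplied directly by the semigroup structure from step (i). One then writes the bounded-operator identity
\[
R_\e A_\Kc(-\mu)^* - A_\Kc(\mu) R_\e = R_\e\big(A_\Kc(-\mu)^* Q_\e - Q_\e A_\Kc(\mu)\big) R_\e
\]
in $\bar\Lc(\Kc^*,\Kc)$, checks that the bracket converges strongly to $-\ad_A(Q_\e)$ as $\mu \to +\infty$, and takes the limit. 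This regularization is adapted to the three-space framework from the start and needs only a single limit rather than your double one. Part (iii) then follows from (ii) and Proposition~\ref{prop:comm-A}, as you correctly anticipate.
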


\begin{proof}
\stepp Assumption (H\ref{hyp-M-propagator}) holds for any $\th \in \R$ and the restriction of $e^{-i\th A}$ defines a one-parameter group $(T_\Kc(\th))_{\th \in \R}$ on $\Kc$. Taking the adjoint also gives a one-parameter group $(T_\Kc^*(\th))_{\th \in \R}$ on $\Kc^*$, and for all $\th \in \R$ the restriction of $T_\Kc^*(\th)$ to $\Hc$ is $e^{itA}$. Since $\Hc$ is dense in $\Kc^*$, we can check that $(T_\Kc^*(\th))$ is strongly continuous on $\Kc^*$. Then $(T_\Kc(\th))$ is weakly continuous, and hence strongly continuous (see \cite[Th. I.5.8]{engel}). Finally we check that the generator of $(T_\Kc(\th))$ is $A_\Kc$, defined on the domain $\Dc_\Kc$. This gives in particular the first statement by \cite[Th. II.1.4]{engel}.

\stepp There exists $C \geq 1$ and $\o \geq 0$ such that $\nr{T_\Kc(\th)}_{\Lc(\Kc)} \leq C e^{\o \abs \th}$ for all $\th \in \R$ (see \cite[Prop. I.5.5]{engel}). Then (\cite[Th. II.1.10]{engel}) for $\abs {\Im(\l)} > \o$ we have $\l \in \rho(A_\Kc)$ and 
\[
\nr{(A_\Kc-\l)\inv}_{\Lc(\Kc)} \leq \frac C {\abs {\Im(\l)} - \o}.
\]
In particular $A_\Kc(A_\Kc-i\mu)\inv$ and $-i\mu (A_\Kc -i\mu)\inv$ go strongly to 0 and $\Id_\Kc$, respectively, as $\mu$ goes to $\pm \infty$.

\stepp For $\mu > \o$ we set $A_\Kc(\mu) = -i\mu A_\Kc (A_\Kc - i\mu)\inv \in \Lc(\Kc)$. In $\bar \Lc(\Kc^*,\Kc)$ we have 
\begin{equation} \label{eq:Re-AKmu}
\begin{aligned}
R_\e A_\Kc(-\mu)^*  - A_\Kc(\mu) R_\e 
= R_\e \big( A_\Kc(-\mu)^* Q_\e - Q_\e A_\Kc(\mu) \big) R_\e,
\end{aligned}
\end{equation}
and in $\bar \Lc(\Kc,\Kc^*)$,
\begin{eqnarray*}
\lefteqn{A_\Kc(-\mu)^* Q_\e - Q_\e A_\Kc(\mu)}\\
&& = i\mu (A_\Kc^* - i\mu)\inv \big( A_\Kc^* Q_\e - Q_\e A_\Kc \big) i\mu (A_\Kc-i\mu)\inv \\
&& - i\mu(A_\Kc^* - i\mu)\inv A_\Kc^* Q_\e (i\mu(A_\Kc-i\mu)\inv +1)\\
&& + \big(i\mu (A_\Kc^*-i\mu)\inv + 1 \big) Q_\e A_\Kc i\mu (A_\Kc-i\mu)\inv.
\end{eqnarray*}
This goes strongly to $-\ad_A(Q_\e)$ as $\mu \to +\infty$. Then, taking the strong limit in \eqref{eq:Re-AKmu} gives in $\bar \Lc(\Dc_{\Kc^*},\Dc_{\Kc^*}^*)$
\[
R_\e A_\Kc^* - A_\Kc^* R_\e = - R_\e \ad_A(Q_\e) R_\e \in \bar \Lc(\Kc^*,\Kc).
\]
This proves the second statement. By Proposition \ref{prop:comm-A}, $R_\e$ maps $\Dc_{\Kc^*}$ (and in particular $\Dc_\Hc$) to $\Dc_\Kc$. We similarly prove that $R_\e^*$ maps $\Dc_\Hc$ to $\Dc_\Kc$, so $R_\e$ also maps $\Dc_{\Kc}^*$ to $\Dc_\Hc^*$.
\end{proof}

The Mourre method relies on the so-called quadratic estimates. Here we will use the following version:

\begin{proposition} \label{prop:estim-quad-form}
Let $\tilde Q \in \bar \Lc(\Kc,\Kc^*)$ be dissipative. We assume that $\tilde Q$ has an inverse $\tilde R \in \bar \Lc(\Kc^*,\Kc)$. Let $\tilde Q_+ \in \bar \Lc(\Kc,\Kc^*)$ be such that $0 \leq \tilde Q_+ \leq -\Im(\tilde Q)$. Then we have 
\[
\tilde R^* \tilde Q_+ \tilde R \leq \Im(\tilde R) \quad \text{and} \quad \tilde R \tilde Q_+ \tilde R^* \leq \Im(\tilde R).
\]
\end{proposition}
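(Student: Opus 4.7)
My approach is the standard Mourre-type quadratic estimate transplanted to the present semilinear setting. For $\psi \in \Kc^*$, setting $\f := \tilde R \psi \in \Kc$ gives $\tilde Q \f = \psi$, and since $\tilde Q_+$ is self-adjoint the definition of the adjoint yields the reduction $\langle \tilde R^* \tilde Q_+ \tilde R\, \psi, \psi\rangle = \langle \tilde Q_+ \f, \f\rangle$. Combining the hypothesis $\tilde Q_+ \leq -\Im(\tilde Q)$ with the identity $\langle \Im(Q) \f, \f\rangle = \Im \langle Q \f, \f\rangle$ (valid for any $Q \in \bar \Lc(\Kc, \Kc^*)$) and the sesquilinear symmetry $\overline{\langle \psi, \f \rangle} = \langle \f, \psi\rangle$ between the two pairings of $\Kc$ and $\Kc^*$, I obtain the chain
\[
\langle \tilde Q_+ \f, \f\rangle \leq -\Im \langle \tilde Q \f, \f\rangle = -\Im \langle \psi, \tilde R \psi\rangle = \Im \langle \tilde R\psi, \psi\rangle = \langle \Im(\tilde R)\psi, \psi\rangle,
\]
which gives the first inequality.

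For the second inequality I repeat the argument with $\tilde Q^*$ in place of $\tilde Q$, using that $\tilde Q \tilde R = \Id_{\Kc^*}$ forces $\tilde R^* \tilde Q^* = \Id_\Kc$, so $\tilde R^*$ is the inverse of $\tilde Q^*$. Since $\Im(\tilde Q^*) = -\Im(\tilde Q)$ the hypothesis reads $\tilde Q_+ \leq \Im(\tilde Q^*)$; setting $\eta := \tilde R^* \psi$, the symmetric computation yields
\[
\langle \tilde R \tilde Q_+ \tilde R^*\, \psi, \psi\rangle = \langle \tilde Q_+ \eta, \eta\rangle \leq \Im \langle \tilde Q^* \eta, \eta\rangle = \Im \langle \psi, \tilde R^* \psi\rangle = \langle \Im(\tilde R)\psi, \psi\rangle,
\]
where the last equality uses $\langle \psi, \tilde R^* \psi\rangle = \overline{\langle \tilde R^* \psi, \psi\rangle} = \langle \tilde R \psi, \psi\rangle$.

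There is no genuine obstacle here: each inequality reduces to a one-line computation, and the invertibility of $\tilde Q$ (resp.\ $\tilde Q^*$) is exactly what justifies the change of variable $\f = \tilde R \psi$ (resp.\ $\eta = \tilde R^* \psi$) that converts a form bound on $\Kc$ into one on $\Kc^*$. The only care required is bookkeeping the adjoints and the conjugate-symmetric duality pairings between $\Kc$ and $\Kc^*$.
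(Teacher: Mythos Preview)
Your proof is correct and is essentially the same argument as the paper's, just written at the level of quadratic forms rather than as a one-line operator identity. The paper simply conjugates the form inequality $\tilde Q_+ \leq \dfrac{\tilde Q^* - \tilde Q}{2i}$ by $\tilde R$ and uses $\tilde R^*\tilde Q^*\tilde R = \tilde R$, $\tilde R^*\tilde Q\tilde R = \tilde R^*$ to obtain $\Im(\tilde R)$; your computation with $\f = \tilde R\psi$ (and $\eta = \tilde R^*\psi$ for the second inequality) is the pointwise unpacking of exactly this identity.
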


\begin{proof}
We simply observe that
\[
\tilde R^* \tilde Q_+ \tilde R \leq \frac {\tilde R^* (\tilde Q^* - \tilde Q) \tilde R}{2i} = \frac {\tilde R-\tilde R^*}{2i} = \Im(\tilde R).
\]
The second estimate is similar.
\end{proof}

\begin{remark} \label{rem:CS}
Given two Banach spaces $\Kc_1$ and $\Kc_2$, $T_1 \in \Lc(\Kc_1,\Kc)$ and $T_2 \in \Lc(\Kc_2,\Kc)$, we have by the Cauchy-Schwarz inequality
\[
\nr{T_1^* \Qp T_2}_{\bar \Lc(\Kc_2,\Kc_1^*)} \leq \nr{T_1^* \Qp T_1}_{\bar \Lc(\Kc_1,\Kc_1^*)}^{\frac 12} \nr{T_2^* \Qp T_2}_{\bar \Lc(\Kc_2,\Kc_2^*)}^{\frac 12} .
\]
\end{remark}

With Assumption (H\ref{hyp-M-comm-pos}) we can apply the quadratic estimates to $R_\e$. This gives the following properties.

\begin{proposition} \label{prop:gze}
Let $\Kc_0 \in \{\Kc,\Hc,\Kc^*\}$. Let $\Th \in \Lc(\Kc,\Kc_0)$. There exists $C > 0$ which only depends on $\Upsilon$ and such that for all $\e \in ]0,1]$ we have
\begin{equation}
 \label{estim-phi-gze}
\nr{\Pi R_\e \Th^*}_{\bar \Lc(\Kc_0^*,\Kc)} 
+ \nr{\Th R_\e \Pi^*}_{\bar \Lc(\Kc^*,\Kc_0)}  
 \leq  \frac {C}   {\sqrt \e}\nr{\Th R_\e \Th^*}_{\bar \Lc(\Kc_0^*,\Kc_0)}^{\frac 12} ,
\end{equation}
\begin{equation}
 \label{estim-phibot-gze}
\nr{\Pi_\bot R_\e \Th^*}_{\bar \Lc(\Kc_0^*,\Kc)} 
+ \nr{\Th R_\e \Pi_\bot^*}_{\bar \Lc(\Kc^*,\Kc_0)}
 \leq  C \left( \nr {\Th}_{\Lc(\Kc,\Kc_0)} +  \nr{\Th R_\e \Th^*}_{\bar \Lc(\Kc_0^*,\Kc_0)}^{\frac 12} \right)
\end{equation}
and
\begin{equation}
 \label{estim-gze}
\nr{R_\e \Th^*}_{\bar \Lc(\Kc_0^*,\Kc)} 
+ \nr{\Th R_\e}_{\bar \Lc(\Kc^*,\Kc_0)}
 \leq  C \left(\nr {\Th} _{\Lc(\Kc,\Kc_0)} + \frac {1}   {\sqrt \e}\nr{\Th R_\e \Th^*}_{\bar \Lc(\Kc_0^*,\Kc_0)}^{\frac 12}\right).
\end{equation}
\end{proposition}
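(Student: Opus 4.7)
The plan is to prove \eqref{estim-phi-gze}, \eqref{estim-phibot-gze}, \eqref{estim-gze} successively using the quadratic estimate of Proposition \ref{prop:estim-quad-form}, applied both to $Q_\e$ itself and to the ``unperturbed'' dissipative operator $Q_\bot$. The starting point is the observation that by (H\ref{hyp-M-comm-pos}),
\[
-\Im(Q_\e) = Q_+ + \e \Pi^* \Re(M) \Pi \geq \e \Upsilon\inv \Pi^* \Ic \Pi,
\]
so applying Proposition \ref{prop:estim-quad-form} to $Q_\e$ with $\tilde Q_+ = \e \Upsilon\inv \Pi^* \Ic \Pi$ will yield
\[
\e R_\e^* \Pi^* \Ic \Pi R_\e \leq \Upsilon \Im(R_\e) \quad \text{and} \quad \e R_\e \Pi^* \Ic \Pi R_\e^* \leq \Upsilon \Im(R_\e).
\]
Since $-\Im(Q_\bot) = Q_+$, the same proposition applied to $Q_\bot$ gives $R_\bot^* Q_+ R_\bot \leq \Im(R_\bot)$ and $R_\bot Q_+ R_\bot^* \leq \Im(R_\bot)$; these become useful only after compression by $\Pi_\bot$ or $\Pi_\bot^*$, which is where (H\ref{hyp-M-Qbot-Qpp})(c) comes in.

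For \eqref{estim-phi-gze} we sandwich the first $Q_\e$-inequality with $\Th^*$ on the right and test against $v \in \Kc_0^*$:
\[
\e \nr{\Pi R_\e \Th^* v}_\Hc^2 \leq \Upsilon \innp{\Im(\tilde R) v}{v}_{\Kc_0^*, \Kc_0} \leq \Upsilon \nr{\tilde R}_{\bar \Lc(\Kc_0^*, \Kc_0)} \nr{v}_{\Kc_0^*}^2,
\]
where $\tilde R = \Th R_\e \Th^*$. The bound $\nr{\Pi \f}_\Kc \leq \Upsilon \nr{\Pi \f}_\Hc$ from (H\ref{hyp-M-Qbot-Qpp})(b), applied with $\f = R_\e \Th^* v \in \Hc$, then upgrades the $\Hc$-norm on the left to a $\Kc$-norm at the cost of a factor $\Upsilon$. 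Running the symmetric argument on $\Pi R_\e^* \Th^*$ (from the second $Q_\e$-inequality) and taking the adjoint will handle $\Th R_\e \Pi^*$.

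For \eqref{estim-phibot-gze} we start from $Q_\bot - Q_\e = i Q_+ + i\e \Pi^* M \Pi$ which, combined with the resolvent identity, gives
\[
R_\e = R_\bot + i R_\bot Q_+ R_\e + i\e R_\bot \Pi^* M \Pi R_\e.
\]
Composing with $\Pi_\bot$ on the left and $\Th^*$ on the right produces three terms. The first is directly bounded by $\Upsilon \nr{\Th}$ via (H\ref{hyp-M-Qbot-Qpp})(c). For the second we rewrite
\[
\innp{\Pi_\bot R_\bot Q_+ R_\e \Th^* v}{\f}_{\Kc, \Kc^*} = \innp{Q_+ R_\e \Th^* v}{R_\bot^* \Pi_\bot^* \f}_{\Kc^*, \Kc}
\]
and apply Cauchy--Schwarz in the non-negative form $Q_+$: the first squared factor is controlled by $\nr{\tilde R} \nr v^2$ via $Q_+ \leq -\Im(Q_\e)$ and the $Q_\e$-quadratic estimate, and the second by $\nr{\Pi_\bot R_\bot \Pi_\bot^*}_{\bar\Lc(\Kc^*,\Kc)} \nr \f^2 \lesssim \nr\f^2$ via the $Q_\bot$-quadratic estimate together with (H\ref{hyp-M-Qbot-Qpp})(c). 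For the third term we factor as $(\Pi_\bot R_\bot \Pi^*) \cdot M \cdot (\e \Pi R_\e \Th^*)$, where the outer factors are uniformly bounded by (H\ref{hyp-M-Qbot-Qpp}) and (H\ref{hyp-M-B}), and, using the $\Hc$-version of the first step, $\nr{\e \Pi R_\e \Th^*}_{\bar\Lc(\Kc_0^*, \Hc)} \leq \sqrt\e \cdot \Upsilon^{1/2} \nr{\tilde R}^{1/2} \leq \Upsilon^{1/2} \nr{\tilde R}^{1/2}$ since $\e \leq 1$. The bound on $\Th R_\e \Pi_\bot^*$ will be obtained from the mirror identity $R_\e = R_\bot + i R_\e Q_+ R_\bot + i\e R_\e \Pi^* M \Pi R_\bot$.

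Finally, \eqref{estim-gze} follows at once from $\Id_\Kc = \Pi + \Pi_\bot$: we decompose $R_\e \Th^* = \Pi R_\e \Th^* + \Pi_\bot R_\e \Th^*$ and apply the two previous estimates, and proceed analogously for $\Th R_\e$ by decomposing $R_\e^* \Th^*$ and taking the adjoint. The main obstacle will be the cross term $\Pi_\bot R_\bot Q_+ R_\e \Th^*$ in \eqref{estim-phibot-gze}: since (H\ref{hyp-M-Qbot-Qpp}) only provides uniform bounds on $\Pi_\bot R_\bot$ and $R_\bot \Pi_\bot^*$, and not on $R_\bot$ itself, one cannot dominate $\innp{Q_+ R_\bot^* \Pi_\bot^* \f}{R_\bot^* \Pi_\bot^* \f}$ directly from $R_\bot Q_+ R_\bot^* \leq \Im(R_\bot)$; the Cauchy--Schwarz argument must therefore be arranged so that every occurrence of $R_\bot$ is flanked by a projection $\Pi_\bot$ or $\Pi_\bot^*$ before any norm is taken, and the $\sqrt\e$-smallness in the $M$-term must precisely offset the absence of an $\e$-independent bound on $\Pi R_\e \Th^*$.
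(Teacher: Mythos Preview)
Your overall architecture is exactly the paper's: quadratic estimate for $Q_\e$ to get \eqref{estim-phi-gze}, resolvent identity comparing $R_\e$ and $R_\bot$ plus Cauchy--Schwarz in a nonnegative form to get \eqref{estim-phibot-gze}, and then $\Id_\Kc = \Pi + \Pi_\bot$ for \eqref{estim-gze}. However, you have systematically conflated $Q_+ := -\Im(Q)$ with the operator $\Qpp$ from (H\ref{hyp-M-Qbot-Qpp}), and this breaks the second step.

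Concretely, from $Q = Q_\bot - i\Qpp$ one gets $-\Im(Q_\bot) = Q_+ - \Qpp$, not $-\Im(Q_\bot) = Q_+$; and $Q_\bot - Q_\e = i\Qpp + i\e \Pi^* M \Pi$, not $iQ_+ + i\e \Pi^* M \Pi$. In particular your ``$Q_\bot$-quadratic estimate'' $R_\bot Q_+ R_\bot^* \leq \Im(R_\bot)$ would require $Q_+ \leq -\Im(Q_\bot)$, whereas in fact the opposite inequality $-\Im(Q_\bot) = Q_+ - \Qpp \leq Q_+$ holds. So that step is unjustified. The fix is simply to write the correct resolvent identity
\[
\Pi_\bot R_\e \Th^* = \Pi_\bot R_\bot \Th^* + i \Pi_\bot R_\bot \Qpp R_\e \Th^* + i\e \Pi_\bot R_\bot \Pi^* M \Pi R_\e \Th^*,
\]
and then handle the middle term by Cauchy--Schwarz in the form $\Qpp$. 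The $R_\e$-factor $\Th R_\e^* \Qpp R_\e \Th^*$ is controlled by Proposition~\ref{prop:estim-quad-form} since $\Qpp \leq Q_+ \leq -\Im(Q_\e)$; the $R_\bot$-factor $\Pi_\bot R_\bot \Qpp (\Pi_\bot R_\bot)^*$ needs no quadratic estimate at all, only the direct bounds $\nr{\Pi_\bot R_\bot} \leq \Upsilon$ and $\nr{\Qpp} \leq \Upsilon$ from (H\ref{hyp-M-Qbot-Qpp}). This also dissolves the ``main obstacle'' you flag at the end: once the correct $\Qpp$ is in place, every $R_\bot$ in the Cauchy--Schwarz factors automatically appears as $\Pi_\bot R_\bot$.
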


\begin{proof}
\stepp By (H\ref{hyp-M-comm-pos}) we have $\e \Pi^* \Pi \leq \e \Upsilon \Re(\Pi^* M \Pi) \leq  - \Upsilon \Im(Q_\e)$, so we can apply Proposition \ref{prop:estim-quad-form} with $\tilde Q = \Upsilon Q_\e$ and $\tilde Q_+ = \e \Pi^* \Pi$. This gives
\[
\e \Th R_\e^* \Pi^* \Pi R_\e \Th^* \lesssim \Th \Im(R_\e) \Th^*.
\]
With (H\ref{hyp-M-Qbot-Qpp}) we obtain for $\f \in \Kc_0^*$
\begin{align*}
\nr{\Pi R_\e \Th^* \f}^2_{\Kc} 
&  \lesssim \nr{\Pi R_\e \Th^* \f}^2_{\Hc}
= \innp{\Th R_\e^* \Pi^* \Pi R_\e \Th^* \f}{\f}_{\Kc_0,\Kc_0^*}\\
& \lesssim \frac 1  {\e} \Im \innp{\Th R_\e \Th^* \f}{\f}_{\Kc_0,\Kc_0^*}.
\end{align*}
This gives the first part of \eqref{estim-phi-gze}. Similarly,
\[
\nr{\Pi R_\e^* \Th^*}_{\bar \Lc(\Kc_0^*,\Kc)} \lesssim \e^{-\frac 12} \nr{\Th R_\e \Th^*}_{\bar \Lc(\Kc_0^*,\Kc^0)}^{\frac 12}.
\]
Taking the adjoint concludes the proof of \eqref{estim-phi-gze}.

\stepp We have $Q_\e = Q_\bot - iQ_\bot^+ - i\e \Pi^* M \Pi$. With the resolvent identity we have in $\bar \Lc(\Kc_0^*,\Kc)$
\begin{equation} \label{eq:res-id-bot}
\Pi_\bot R_\e \Th^* = \Pi_\bot R_\bot \Th^* + i \Pi_\bot R_\bot \Qpp R_\e \Th^* + i \e \Pi_\bot R_\bot \Pi^* M \Pi  R_\e \Th^*.
\end{equation}
By Remark \ref{rem:CS}, (H\ref{hyp-M-Qbot-Qpp}) and Proposition \ref{prop:estim-quad-form} applied with $\tilde Q_+ = \Qpp \leq -\Im(Q_\e)$ we have
\begin{eqnarray*}
\lefteqn{\nr{\Pi_\bot R_\bot \Qpp  R_\e \Th^*}_{\bar \Lc(\Kc_0^*,\Kc)}} \\
&&\leq \nr{\Pi_\bot R_\bot \Qpp (\Pi_\bot R_\bot)^*}_{\bar \Lc(\Kc^*,\Kc)}^{\frac 12}\nr{\Th R_\e^* \Qpp R_\e \Th^*}_{\bar \Lc(\Kc_0^*,\Kc_0)}^{\frac 12}\\
&&\lesssim \nr{\Th R_\e \Th^*}_{\bar \Lc(\Kc_0^*,\Kc_0)}^\frac 12.
\end{eqnarray*}
On the other hand, by (H\ref{hyp-M-Qbot-Qpp}), (H\ref{hyp-M-B}) and \eqref{estim-phi-gze},
\begin{align*}
\e \nr{\Pi_\bot R_\bot  \Pi^* M \Pi R_\e \Th^*}_{\bar \Lc(\Kc_0^*,\Kc)}
\lesssim  \e \nr{\Pi R_\e \Th^*}_{\bar \Lc(\Kc_0^*,\Kc)} 
\lesssim \sqrt \e\nr{\Th R_\e \Th^*}^{\frac 12}_{\bar \Lc(\Kc_0^*,\Kc_0)}.
\end{align*}
The first term in \eqref{eq:res-id-bot} is estimated by (H\ref{hyp-M-Qbot-Qpp}), and the first part of \eqref{estim-phibot-gze} follows. As above, we prove the same estimate for $R_\e^*$ and get the second part by taking the adjoint. Finally, \eqref{estim-phibot-gze} and \eqref{estim-phi-gze} give \eqref{estim-gze}.
\end{proof}

Now we can prove the first part of Theorem \ref{th:mourre-abstract}:

\begin{proof}[Proof of Estimate \eqref{eq:mourre-abstract-1}]
Without loss of generality, we can assume that $\d \in \big]\frac 12,1\big]$.

\stepp 
For $\e \in [0,1]$ we set $\Th_\e = \pppg {A}^{-\d} \pppg {\e A}^{\d -1}$. This defines a bounded selfadjoint operator on $\Hc$ and by the functional calculus we have
\begin{equation} \label{estim-Qe}
\nr{\Th_\e}_{\Lc(\Hc)} \leq 1, \quad \nr{A \Th_\e}_{\Lc(\Hc)} + \nr{\Th_\e A}_{\Lc(\Hc)} \lesssim \e^{\d-1} \quad \text{and} \quad \nr{\Th_\e'}_{\Lc(\Hc)} \lesssim \e^{\d-1},
\end{equation}
where we denote by a prime the derivative with respect to $\e$. We set
$
F_\e = \Th_\e R_\e \Th_\e.
$
By \eqref{estim-Qe} and Proposition \ref{prop:gze} applied with $\Th = \Th_\e$ we get for $\e \in ]0,1]$
\begin{equation*} 
\nr{F_\e}_{\Lc(\Hc)} \leq \nr{R_\e \Th_\e}_{\Lc(\Hc,\Kc)} \lesssim 1 + \frac {\nr{F_\e}_{\Lc(\Hc)}^{\frac 12}} {\sqrt \e},
\end{equation*}
and hence
\begin{equation} \label{estim-Fe}
\nr{F_\e}_{\Lc(\Hc)} \lesssim \frac 1 {\e}.
\end{equation}
The derivative of $F$ is given by
\[
F_\e' = \Th_\e' R_\e \Th_\e + \Th_\e R_\e \Th_\e' + i \Th_\e R_\e \Pi^* M \Pi R_\e \Th_\e.
\]
By \eqref{estim-gze} and \eqref{estim-Qe} we have
\begin{equation} \label{estim-QGQ'}
\nr{\Th_\e' R_\e \Th_\e + \Th_\e R_\e \Th_\e'}_{\Lc(\Hc)} \lesssim \e^{\d-1} \big(1 + \e^{-\frac 12} \nr{F_\e}_{\Lc(\Hc)}^{\frac 12} \big).
\end{equation}
For the last term we write in $\Lc(\Kc,\Kc^*)$
\[
\Pi^* M \Pi = M - \Pi^* M \Pi_\bot - \Pi_\bot^* M.
\]
By Proposition \ref{prop:gze} and (H\ref{hyp-M-B})-(H\ref{hyp-M-Qbot-Qpp}) for $M$ we have
\begin{align*}
\nr{\Th_\e R_\e \Pi^* M \Pi_\bot R_\e \Th_\e}_{\Lc(\Hc)} + \nr{\Th_\e R_\e  \Pi_\bot^* M  R_\e \Th_\e}_{\Lc(\Hc)} \lesssim 1 + \frac {\nr{F_\e}_{\Lc(\Hc)}^{\frac 12}}{\sqrt \e} + \frac {\nr{F_\e}_{\Lc(\Hc)}}{\sqrt \e}.
\end{align*}
It remains to estimate $\Th_\e R_\e M R_\e \Th_\e$. By Proposition \ref{prop:Re-A} we can write
\begin{align*} 
\Th_\e R_\e \ad_A(Q) R_\e \Th_\e  
& = \Th_\e R_\e (Q A_\Kc - A_\Kc^* Q) R_\e \Th_\e \\
& = \Th_\e A R_\e \Th_\e - \Th_\e R_\e A \Th_\e + i\e \Th_\e R_\e \ad_A(\Pi^* M \Pi) R_\e \Th_\e.
\end{align*} 
With \eqref{estim-Qe} and Proposition \ref{prop:gze} we get
\[
\nr{\Th_\e R_\e \ad_A(Q) R_\e \Th_\e}_{\Lc(\Hc)} \lesssim \e^{\d-1} + \e^{\d - \frac 32} \nr{F_\e}_{\Lc(\Hc)}^{\frac 12} + \nr{F_\e}_{\Lc(\Hc)}. 
\]
On the other hand, by Remark \ref{rem:CS} and Proposition \ref{prop:estim-quad-form},
\[
\nr{\Th_\e R_\e \Qp R_\e \Th_\e}_{\Lc(\Hc)} \leq \nr{\Th_\e R_\e \Qp R_\e^* \Th_\e}_{\Lc(\Hc)}^{\frac 12} \nr{\Th_\e R_\e^* \Qp R_\e \Th_\e}_{\Lc(\Hc)}^{\frac 12} \leq \nr{F_\e}_{\Lc(\Hc)}.
\]
All these estimates together give
\begin{equation*} 
\nr{F_\e'}_{\Lc(\Hc)} \lesssim \e^{\d-1} + \e^{-\frac 12} \nr{F_\e}_{\Lc(\Hc)} +  \e^{\d-\frac 32}\nr{F_\e}_{\Lc(\Hc)}^{\frac 12}.
\end{equation*}
It is classical (see for instance Lemma 3.3 in \cite{jensenmp84}) that this implies
\begin{equation} \label{estim-fze-fin}
\nr{F_\e}_{\Lc(\Hc)} \lesssim 1.
\end{equation}
Taking the limit $\e \to 0$ gives \eqref{eq:mourre-abstract-1}.
\end{proof}

We continue with the proofs of Estimates \eqref{eq:mourre-abstract-2} to \eqref{eq:mourre-abstract-4}. For $\e \in [0,1]$ and $N \in \N^*$ we set
\begin{equation*} 
Q_{N,\e} = \sum_{j=0}^N \frac {\e^j}{j!} \ad_A^j(Q)  \in \bar \Lc(\Kc,\Kc^*).
\end{equation*}

\begin{proposition} \label{prop:gnze}
Let $N \in \N^*$. There exist $\e_N \in ]0,1]$ and $c > 0$ which only depend on $N$ and $\Upsilon$ such that for all $\e \in ]0,\e_N]$ the operator $Q_{N,\e}$ has an inverse $R_{N,\e} \in \bar \Lc(\Kc^*,\Kc)$ and
\begin{equation} \label{eq:estim-RNe}
\nr{R_{N,\e}}_{\bar \Lc(\Kc^*,\Kc)} \leq \frac {c}{\e}, \qquad 
\nr{R_{N,\e} \pppg A \inv}_{\Lc(\Hc,\Kc)} \leq \frac {c}{\sqrt \e}.
\end{equation}
Moreover, the function $\e  \mapsto R_{N,\e}$ is differentiable in $\Lc(\Dc_\Hc,\Dc_\Hc^*)$ and
\[
 R_{N,\e}' = R_{N,\e} A_\Hc - A_\Hc^* R_{N,\e} + \frac {\e^N}{N!} R_{N,\e} \ad_A^{N+1}(Q) R_{N,\e}.
\]
\end{proposition}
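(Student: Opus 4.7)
The plan is to follow the same strategy used for $Q_\e$ and $R_\e$ in Propositions \ref{prop:Re-A} and \ref{prop:gze}, replacing the simple perturbation $-i\e\Pi^*M\Pi$ by the full Taylor polynomial $Q_{N,\e}$. The key identity is $\ad_A(Q) = -iM + i\beta Q_+$, which follows from the definition $M = i\ad_A(Q) + \beta Q_+$ in (H\ref{hyp-M-comm-pos}). Taking imaginary parts and using that for $j\geq 2$ each $\ad_A^j(Q)$ has norm at most $\Upsilon$ in $\bar\Lc(\Kc,\Kc^*)$ by (H\ref{hyp-M-B}), this yields in $\bar\Lc(\Kc,\Kc^*)$
\[
-\Im(Q_{N,\e}) = (1-\e\beta) Q_+ + \e \Re(M) + O(\e^2),
\]
with an $O(\e^2)$ constant depending only on $N$ and $\Upsilon$.

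Next I would establish a coercivity estimate in $\Kc$ of strength $\e$. For $\f \in \Kc$ I decompose $\f = \Pi\f + \Pi_\bot\f$ and expand $\innp{\Re(M)\f}{\f}$ into its four pieces: (H\ref{hyp-M-comm-pos}) yields $\innp{\Re(M)\Pi\f}{\Pi\f} \geq \Upsilon^{-1}\|\Pi\f\|_\Hc^2$, and the cross terms plus the $\Pi_\bot$ diagonal term are absorbed via Cauchy--Schwarz combined with $\|\Pi\f\|_\Kc \leq \Upsilon \|\Pi\f\|_\Hc$ from (H\ref{hyp-M-Qbot-Qpp}). The remaining $\|\Pi_\bot\f\|_\Kc^2$ contribution is handled through the identity $\Pi_\bot \f = \Pi_\bot R_\bot Q_\bot \f$ with $Q_\bot = Q + i\Qpp$ and $Q\f = Q_{N,\e}\f - \e\ad_A(Q)\f + O(\e^2)\f$, the bound $\|\Pi_\bot R_\bot\|_{\bar\Lc(\Kc^*,\Kc)} \leq \Upsilon$, and the Cauchy--Schwarz estimate $\|\Qpp\f\|_{\Kc^*}^2 \leq \Upsilon\innp{Q_+\f}{\f}$ (coming from $0 \leq \Qpp \leq Q_+$ via $Q_+ = -\Im(Q_\bot) + \Qpp$). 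Combining all the pieces gives, for $\e$ smaller than some $\e_N$ depending only on $N$ and $\Upsilon$,
\[
-\Im\innp{Q_{N,\e}\f}{\f} \geq c_1 \e \|\f\|_\Kc^2 - c_2 \e \|Q_{N,\e}\f\|_{\Kc^*}^2,
\]
and a Young inequality yields $\|\f\|_\Kc \leq (c/\e)\|Q_{N,\e}\f\|_{\Kc^*}$. Together with the dissipativity of $Q_{N,\e}$, this gives existence of $R_{N,\e} \in \bar\Lc(\Kc^*,\Kc)$ with $\|R_{N,\e}\|_{\bar\Lc(\Kc^*,\Kc)} \leq c/\e$ by Lax--Milgram.

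The refined bound $\|R_{N,\e}\pppg{A}^{-1}\|_{\Lc(\Hc,\Kc)} \leq c/\sqrt{\e}$ would come from a quadratic form argument mirroring Proposition \ref{prop:gze}. Since $R_{N,\e}$ has non-negative imaginary part, the identity $R_{N,\e}^* (-\Im(Q_{N,\e})) R_{N,\e} = \Im(R_{N,\e})$ together with the lower bound on $-\Im(Q_{N,\e})$ gives $\e R_{N,\e}^* \Pi^* \Pi R_{N,\e} \lesssim \Im(R_{N,\e}) + \e\,(\text{remainder})$, where the remainder involves $R_{N,\e}^*(\Pi_\bot\text{-terms})R_{N,\e}$ and is controlled via the $c/\e$ bound already established. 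Sandwiching with $\pppg{A}^{-1}$, using $\|\Pi \psi\|_\Kc \leq \Upsilon \|\Pi\psi\|_\Hc$ to convert projections in the $\Kc$-norm into $\Hc$-norms, and estimating the $\Pi_\bot$ contribution as in \eqref{estim-phibot-gze}, yields the $\e^{-1/2}$ bound after taking square roots.

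For the derivative formula, reindexing gives
\[
\partial_\e Q_{N,\e} = \sum_{k=0}^{N-1}\frac{\e^k}{k!}\ad_A^{k+1}(Q) = \ad_A(Q_{N,\e}) - \frac{\e^N}{N!}\ad_A^{N+1}(Q).
\]
Since $Q_{N,\e} \in \bar\Cc^1_A(\Kc,\Kc^*)$ uniformly in $\e$ by (H\ref{hyp-M-B}), the argument of Proposition \ref{prop:Re-A} gives $R_{N,\e} \in \bar\Cc^1_A(\Kc^*,\Kc)$ with $\ad_A(R_{N,\e}) = -R_{N,\e}\ad_A(Q_{N,\e})R_{N,\e}$ and the appropriate mapping properties between the graph-norm domains. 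Combining with $\partial_\e R_{N,\e} = -R_{N,\e}(\partial_\e Q_{N,\e})R_{N,\e}$ and rewriting $\ad_A(R_{N,\e}) = R_{N,\e} A_\Hc - A_\Hc^* R_{N,\e}$ on $\Dc_\Hc$ yields exactly the stated formula.

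The main technical obstacle will be the coercivity step: the Mourre estimate (H\ref{hyp-M-comm-pos}) naturally provides positivity only in the weaker norm $\|\Pi\f\|_\Hc$, and converting this into a coercivity estimate uniform in the $\Kc$-norm forces one to trade the $\|\Pi_\bot\f\|_\Kc^2$ contribution against $\|Q_{N,\e}\f\|_{\Kc^*}$ using the invertibility of $Q_\bot$. The essential point is that all constants must depend only on $N$ and $\Upsilon$, not on the possibly small lower bound $c_0$ of $Q_+$, which is why a direct Neumann series around $Q^{-1}$ is not available.
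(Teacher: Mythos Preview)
Your approach is genuinely different from the paper's, and your final remark reveals a misconception that is worth correcting. The paper does \emph{not} attempt a direct coercivity argument for $Q_{N,\e}$. Instead it writes $Q_{N,\e} = Q_\e + P_\e + \tilde P_\e$ with $P_\e = \e \Pi_\bot^* \ad_A(Q)$ and $\tilde P_\e$ collecting the rest, and runs a two-step Neumann perturbation starting from $R_\e = Q_\e^{-1}$. The point you seem to have missed is that the estimates on $R_\e$ coming from Proposition~\ref{prop:gze} and from \eqref{estim-fze-fin} already depend only on $\Upsilon$, not on the lower bound $c_0$ of $Q_+$. So a Neumann series around $R_\e$ (not around $Q^{-1}$) \emph{is} available, and this is precisely what the paper does: one shows $\|\tilde P_\e R_\e\|_{\Lc(\Kc^*)} \lesssim \sqrt\e$ and $\|\tilde R_\e P_\e\|_{\Lc(\Kc)} \lesssim \sqrt\e$ using the refined bounds $\|\Pi_\bot R_\e\|, \|R_\e \Pi_\bot^*\| \lesssim \e^{-1/2}$, and the estimates \eqref{eq:estim-RNe} then transfer from $R_\e$ to $R_{N,\e}$ by the explicit Neumann formulas.

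Your direct coercivity route for the $c/\e$ bound is plausible and can be made to work, but your argument for the $c/\sqrt\e$ bound has a real gap. Mimicking Proposition~\ref{prop:gze} for $R_{N,\e}$ gives at best
\[
\|R_{N,\e}\pppg A^{-1}\|_{\Lc(\Hc,\Kc)} \lesssim 1 + \e^{-1/2}\,\|\pppg A^{-1} R_{N,\e} \pppg A^{-1}\|_{\Lc(\Hc)}^{1/2},
\]
and to conclude $\e^{-1/2}$ you need $\|\pppg A^{-1} R_{N,\e} \pppg A^{-1}\| \lesssim 1$. You have not established this; the $c/\e$ bound only gives $\lesssim \e^{-1}$ here, which yields nothing better than $\e^{-1}$ on the left. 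In the paper this uniform bound is inherited from \eqref{estim-fze-fin} (proved for $R_\e$ via the differential-inequality argument) through the Neumann expansion. If you insist on avoiding the perturbation from $R_\e$, you would have to rerun the full Mourre differential inequality for $R_{N,\e}$ itself, which is substantially more work than what you have sketched. Your treatment of the derivative formula is correct and matches the paper's.
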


\begin{proof}
\stepp By Proposition \ref{prop:gze} applied with $\Kc_0 = \Kc$ and $\Th = \Id_\Kc$ we have  
\begin{equation} \label{estim-Re-1}
\nr{R_\e}_{\bar \Lc(\Kc^*,\Kc)} \lesssim \frac 1 \e ,\quad\text{and} \quad  \nr{\Pi_\bot R_\e}_{\bar \Lc(\Kc^*,\Kc)} + \nr{R_\e \Pi_\bot^*}_{\bar \Lc(\Kc^*,\Kc)} \lesssim \frac 1 {\sqrt \e} .
\end{equation}
With \eqref{estim-fze-fin} and Proposition \ref{prop:gze} applied with $\Kc_0 = \Hc$ and $\Th = \pppg A\inv$ we also get
\begin{equation} \label{estim-Re-2}
\nr{R_\e \pppg A\inv}_{\Lc(\Hc,\Kc)} \lesssim \frac 1 {\sqrt \e}.
\end{equation}

\stepp We have $Q_{N,\e} = Q_\e + P_\e + \tilde P_\e$ where 
\[
\tilde P_\e = i\e \b \Pi^* Q_+ \Pi + \e \Pi^* \ad_A(Q) \Pi_\bot + \sum_{j=2}^N \frac {\e^j}{j!} \ad_A^j(Q) \quad \text{and} \quad P_\e = \e \Pi_\bot^* \ad_A(Q).
\]
We have $\|\tilde P_\e \|_{\Lc(\Kc,\Kc^*)} \lesssim \e$ and, by \eqref{estim-Re-1},
\begin{align*}
\| \tilde P_\e R_\e \|_{\Lc(\Kc^*)}
& \lesssim \e \nr{Q_+ \Pi R_\e}_{\Lc(\Kc^*)} + \e \nr{\Pi_\bot R_\e}_{\Lc(\Kc^*,\Kc)} + \e^2 \nr{R_\e}_{\Lc(\Kc^*,\Kc)}\\
& \lesssim \e \nr{Q_+  R_\e}_{\Lc(\Kc^*)} + \e \nr{Q_+ \Pi_\bot  R_\e}_{\Lc(\Kc^*)} + \sqrt \e.
\end{align*}
By Remark \ref{rem:CS} and Proposition \ref{prop:estim-quad-form} for the first term, and \eqref{estim-Re-1} for the second we get
\begin{align*}
\| \tilde P_\e R_\e \|_{\Lc(\Kc^*)} \lesssim \sqrt \e.
\end{align*}
In particular the operator $\Id_{\Kc^*} + \tilde P_\e R_\e$ is invertible in $\Lc(\Kc^*)$ for $\e$ small enough. Then the operator $\tilde Q_\e = Q_\e + \tilde P_\e$ is invertible and its inverse $\tilde R_\e$ is given by 
\[
\tilde R_\e = R_\e - R_\e (\Id_{\Kc^*} + \tilde P_\e R_\e)\inv \tilde P_\e R_\e.
\]
With this expression we can check that $\tilde R_\e$ satisfies the same estimates \eqref{estim-Re-1}-\eqref{estim-Re-2} as $R_\e$.
Similarly, we have $\nr{P_\e}_{\Lc(\Kc,\Kc^*)} \lesssim \e$ and 
\[
\| \tilde R_\e P_\e \|_{\Lc(\Kc)} \lesssim \e \|\tilde R_\e \Pi_\bot^*\|_{\Lc(\Kc^*,\Kc)} \lesssim \sqrt\e.
\]
Thus for $\e$ small enough the operator $Q_{N,\e} = \tilde Q_\e + P_\e$ is invertible and its inverse $R_{N,\e}$ is given by 
\[
R_{N,\e} = \tilde R_\e - \tilde R_\e P_\e (\Id_\Kc +\tilde R_\e P_\e)\inv \tilde R_\e.
\]
We deduce \eqref{eq:estim-RNe}.

\stepp For the last statement we observe that in $\bar \Lc(\Kc,\Kc^*)$ we have
\[
Q_{N,\e}' = \ad_A(Q_{N,\e}) - \frac {\e^N}{N!} \ad_A^{N+1}(Q).
\]
As in Proposition \ref{prop:Re-A} we can check that $R_{N,\e} \in \bar \Cc^1_A(\Kc^*,\Kc)$ with $\ad_A(R_{N,\e}) = - R_{N,\e} \ad_A(Q_{N,\e}) R_{N,\e}$. We deduce in $\bar \Lc(\Kc,\Kc^*)$
\[
R_{N,\e}' = - R_{N,\e} Q_{N,\e}' R_{N,\e} = \ad_A(R_{N,\e}) + \frac {\e^N}{N!} R_{N,\e} \ad_A^{N+1}(Q) R_{N,\e}. \qedhere
\]
\end{proof}

\detail 
{
For the proof of Proposition \ref{prop:gnze} we need the following lemma. The proof is a direct computation.

\begin{lemma} \label{lem:res-perturb}
Let $T \in \bar \Lc(\Kc,\Kc^*)$ and assume that $T$ has an inverse $T\inv \in \bar \Lc(\Kc^*,\Kc)$.
Let $\Kc_0$ be as in Proposition \ref{prop:gze}. Let $P_1 \in \bar \Lc(\Kc_0,\Kc^*)$ and $P_2 \in \Lc(\Kc,\Kc_0)$ (or $P_1 \in \Lc(\Kc_0,\Kc^*)$ and $P_2 \in \bar \Lc(\Kc,\Kc_0)$) be such that $\nr{P_2 T\inv P_1}_{\Lc(\Kc_0)} < 1$. Then $T + P_1P_2 \in \bar \Lc(\Kc,\Kc^*)$ has a bounded inverse given by $T\inv - T\inv P_1 \G\inv P_2 T\inv \in \bar \Lc(\Kc^* ,\Kc)$, where $\G = \Id_{\Kc_0} + P_2 T\inv P_1 \in \Lc(\Kc_0)$.
\end{lemma}
}

Now we can finish the proof of Theorem \ref{th:mourre-abstract}.

\begin{proof}[Proof of Estimate \eqref{eq:mourre-abstract-2}]
Let $\e_N$ be given by Proposition \ref{prop:gnze}. For $\e \in ]0,\e_N]$ we set in $\Lc(\Hc)$
\[
F_{N,\e} = \pppg{A}^{\d_1}e^{\e A} \1_{\R_-}(A) R_{N,\e}  \1_{\R_+}(A) e^{- \e A} \pppg{A}^{\d_2}.
\]
Then in the strong sense we have
\[
\begin{aligned}
F_{N,\e}'
 = \frac {\e^N}{N!} \pppg{A}^{\d_1} e^{\e A} \1_{\R_-}(A)  R_{N,\e} \ad_A^{N+1}(Q) R_{N,\e}  \1_{\R_+}(A) e^{- \e A} \pppg{A}^{\d_2}.
\end{aligned}
\]
By Proposition \ref{prop:gnze} and the functional calculus we deduce 
\[
\nr{F_{N,\e}'}_{\Lc(\Hc)} \lesssim \e^{N-\d_1 -2 -\d_2}.
\]
Since $N - \d_1 - \d_2 -2 > -1$, this proves that $F_{N,\e}$ is bounded in $\Lc(\Hc)$ uniformly in $\e \in ]0,\e_N]$.
\end{proof}

\begin{proof} [Proof of Estimates \eqref{eq:mourre-abstract-3} and \eqref{eq:mourre-abstract-4}]

\stepp Let $\eta > 1$. Let $\e_1 \in ]0,1]$ be given by Proposition \ref{prop:gnze}. For $\e \in ]0,\e_1]$ we set 
\[
F_{1,\e} = \1_{\R_-}(A)e^{\e A} R_{1,\e} \pppg{A}^{-\eta}.
\]
By Proposition \ref{prop:gnze} we have $\nr{F_{1,\e}}_{\Lc(\Hc)} \lesssim \e^{-\frac 12}$. On the other hand we have 
\begin{equation} \label{eq:der-F1eps}
F'_{1,\e} =  \1_{\R_-}(A)e^{\e A} R_{1,\e} A \pppg{A}^{-\eta} + \e \1_{\R_-}(A)e^{\e A} R_{1,\e} \ad_A^2(Q) R_{1,\e} \pppg{A}^{-\eta}.
\end{equation}
By interpolation we have
\begin{align*}
\nr{ \1_{\R_-}(A)e^{\e A} R_{1,\e} \pppg{A}^{1-\eta}}_{\Lc(\Hc)}
& \leq \nr{ \1_{\R_-}(A)e^{\e A} R_{1,\e}}^{\frac 1 \eta} \nr{ \1_{\R_-}(A)e^{\e A} R_{1,\e} \pppg{A}^{-\d}}^{1 - \frac 1 \eta}  \\
& \lesssim \e^{-\frac 1 \eta} \nr{F_{1,\e}}^{1 - \frac 1 \eta}.
\end{align*}
For the second term in \eqref{eq:der-F1eps} we use (H\ref{hyp-M-B}) and Proposition \ref{prop:gnze}. Finally, 
\[
\nr{F'_{1,\e}}_{\Lc(\Hc)} \lesssim \e^{-\frac 12} + \e^{-\frac 1 \eta} \nr{F_{1,\e}}^{1 - \frac 1 \eta},
\]
so $F_{1,\e}$ is bounded. At the limit $\e \to 0$ we get 
\begin{equation} \label{eq:RmRA1}
\nr{\1_{\R_-}(A) R \pppg{A}^{-\eta}}_{\Lc(\Hc)} \lesssim 1.
\end{equation}
We similarly get a uniform bound for $\1_{\R_+}(A) R^* \pppg{A}^{-\eta}$. Taking the adjoint gives 
\begin{equation} \label{eq:RmRA2}
\nr{\pppg{A}^{-\eta} R \1_{\R_+}(A)}_{\Lc(\Hc)} \lesssim 1.
\end{equation}

\stepp For $I \subset \R$ we write $A_I$ for $\1_{I}(A)$. We prove that we have, uniformly in $n,m \in \N$,
\begin{equation} \label{eq:Mourre-Anm}
\nr{A_{[n,n+1[} R A_{[m,m+1[}}_{\Lc(\Hc)} \lesssim 1.
\end{equation}
We observe that for any $\l \in \R$ the operator $A- \l$ is also $\Upsilon$-conjugated to $Q$ up to order $N$, so the estimates \eqref{eq:mourre-abstract-1} and \eqref{eq:mourre-abstract-2} hold with $A$ replaced by $A-\l$ uniformly in $\l$. In particular, with \eqref{eq:mourre-abstract-1} applied to $A-n$ we get \eqref{eq:Mourre-Anm} when $n = m$. This also holds with $R$ replaced by $R^*$. For the general case we write 
\begin{align*}
A_{[n,n+1[} R A_{[m,m+1[} 
& = A_{[n,n+1[} A_{]-\infty,m]}  R A_{[m,m+1[} + A_{[n,n+1[} A_{]m,+\infty[}  R^* A_{[m,m+1[}\\
& + A_{[n,n+1[} A_{]m,+\infty[}  (R-R^*) A_{[m,m+1[}.
\end{align*}
The first two terms are estimated by \eqref{eq:RmRA1} and \eqref{eq:RmRA2} applied with $A-m$ instead of $A$. For the third term we observe that $R-R^* = 2 R^* \Qp R$ is non-negative, so by Remark \ref{rem:CS} we have 
\begin{multline*}
\nr{A_{[n,n+1[} (R-R^*) A_{[m,m+1[}}_{\Lc(\Hc)}\\
\leq \nr{A_{[n,n+1[} (R-R^*) A_{[n,n+1[}}_{\Lc(\Hc)}^{\frac 12} \nr{A_{[m,m+1[} (R-R^*) A_{[m,m+1[}}_{\Lc(\Hc)}^{\frac 12}.
\end{multline*}
We can apply \eqref{eq:Mourre-Anm} already proved when $n=m$ to $R$ and $R^*$, which concludes the proof of \eqref{eq:Mourre-Anm} when $n\neq m$. 

\stepp From \eqref{eq:Mourre-Anm} we deduce 
\[
\nr{A_{[n,n+1[} R  A_{[0,n+1[} \pppg {A}^{\d-1} \psi}_\Hc \lesssim \sum_{m=0}^n \pppg{m+1}^{\d-1} \nr{A_{[m,m+1[} \psi}_\Hc,
\]
uniformly in $n \in \N$ and $\psi \in \Hc$. Then, for $\f,\p \in \Hc$,
\begin{multline} \label{eq:B1}
\sum_{n \in \N} \abs{\innp{\pppg{A}^{-\d} A_{[n,n+1[} R  A_{[0,n+1[} \pppg {A}^{\d-1} \psi}{\f}_\Hc}\\
\lesssim \sum_{n \in \N} \pppg n^{-\d} \nr{A_{[n,n+1[} \f}_\Hc \sum_{m = 0}^n \pppg{m+1}^{\d-1} \nr{A_{[m,m+1[} \psi}_\Hc \lesssim \nr\f_\Hc \nr\p_\Hc.
\end{multline}
For the last step we have used the Cauchy-Schwarz inequality, Lemma 3.4 in \cite{jensen85} and the fact that the families $(A_{[n,n+1[} \f)_{n \in \N}$ and $(A_{[m,m+1[} \psi)_{0\leq m \leq n}$ are orthogonal in $\Hc$. 

\stepp Now we prove 
\begin{equation} \label{eq:B2}
\sum_{n \in \N} \abs{\innp{\pppg{A}^{-\d} A_{[n,n+1[} R  A_{[n+1,+\infty[} \pppg {A}^{\d-1} \psi}{\f}_\Hc} \lesssim \nr\f_\Hc \nr\p_\Hc.
\end{equation}
If $\d \leq 1$ this is a consequence of \eqref{eq:mourre-abstract-2} applied to $A-(n+1)$. If $1 < \d < N$ we observe that $\big\|\pppg{A-(n+1)}^{1-\d} \pppg A^{\d-1} \big\|_{\Lc(\Hc)} \lesssim n^{\d-1}$ so, again by \eqref{eq:mourre-abstract-2} applied to $A-(n+1)$,
\[
\sum_{n \in \N} \abs{\innp{\pppg{A}^{-\d} A_{[n,n+1[} R  A_{[n+1,+\infty[} \pppg {A}^{\d-1} \psi}{\f}_\Hc} \lesssim \sum_{n \in\N} n^{-\d} \nr{A_{[n,n+1[} \f}_\Hc n^{\d-1} \nr{\p}_\Hc,
\]
and \eqref{eq:B2} follows. With \eqref{eq:B1} we obtain 
\[
\nr{\pppg{A}^{-\d} A_{[0,+\infty[} R  A_{[0,+\infty[} \pppg {A}^{\d-1}}_{\Lc(\Hc)} \lesssim 1.
\]
With \eqref{eq:mourre-abstract-2} we finally get \eqref{eq:mourre-abstract-3}. The proof of \eqref{eq:mourre-abstract-4} is similar.
\end{proof}

\section{Local energy decay} \label{sec:loc-decay}

\newcommand{\high}{{\mathsf{high}}}
\newcommand{\low}{{\mathsf{low}}}

In this section we show how the local energy decay of Theorem \ref{th:locdec-schrodinger} can be deduced from the resolvent estimates given by Theorem \ref{th:lowfreq-Schrodinger}.

\begin{proof}[Proof of Theorem \ref{th:locdec-schrodinger}]
\stepp Let $f \in \Sc$ and $\mu \in ]0,1]$. All along the proof we use the notation $\z$ for $\tau + i \mu$, where $\tau$ is a variable in $\R$. For $t > 0$ we have 
\[
e^{-itP} f = \frac {1}{2i\pi} \int_\R e^{-it\z} (P-\z)\inv f \, d\tau.
\]
We consider $\chi_+ \in C^\infty(\R,[0,1])$ equal to 0 on $]-\infty,1]$ and equal to 1 on $[2,+\infty[$. For $\tau \in \R$ we set $\chi_-(\tau) = \chi_+(-\tau)$ and $\chi_{\low} = 1 - \chi_-(\tau) - \chi_+(\tau)$. Then for $* \in \{-,\low,+\}$ we set 
\[
u_{*,\mu}(t) = \frac {1}{2i\pi} \int_\R \chi_*(\tau) e^{-it\z} (P-\z)\inv f \, d\tau.
\]
We similarly define $u_{*,\mu}^0$ with $P$ replaced by $P_0$ and $f$ replaced by $f_0 = wf$. 

\stepp  
Let $m \in \N^*$ such that 
\[
\frac {d+\rho_1} 2 < m < \frac {d+\rho_1} 2 + 1.
\]
We have $\d > m + \frac 12$. After integrations by parts and using the uniform estimates for the resolvent of $P$ far from its spectrum, we see that 
\[
\nr{(it)^m u_{-,\mu}(t)}_{L^2} \leq \frac 1 {2\pi} \int_{-\infty}^{-1} \nr{e^{-it\z} \partial_\tau^m \big( \chi_-(\tau) (P-\z)^{-1} \big)f} \, d\tau \lesssim e^{t\mu} \nr{f}_{L^2},
\]
where the constant hidden in the symbol $\lesssim$ is independant of $\mu$. Similarly, using \eqref{estim-P-high-freq} to estimate the derivatives of $(P-\z)^{-1}$ near the positive real axis, we obtain 
\[
\big\|(it)^m \pppg {x}^{-\d} u_{+,\mu}(t) \big\|_{L^{2}} \lesssim e^{t\mu} \big\|\pppg {x}^\d f\big\|_{L^{2}}.
\]
We have similar estimates for $u_{-,\mu}^0(t)$ and $u_{+,\mu}^0(t)$. 

\stepp By integrations by parts we have 
\[
(it)^{m-1} \big(u_{\low}(t) - u_{0,\low}(t)\big) = \frac {1}{2i\pi} \int_{\R} e^{-it\z} \th_\mu^{(m-1)}(\t) \, d\tau,
\]
where we have set
\[
\th_\mu(\t) = \chi_\low(\tau) \big( (P-\z)\inv  - (P_0-\z)\inv w \big) f.
\]
By Theorem \ref{th:lowfreq-Schrodinger} we have, uniformly in $\mu > 0$,
\[
\nr{\pppg{x}^{-\d}\th_\mu^{(m-1)}(\t)}_{L^2} \lesssim \abs \tau^{\frac {d+\rho_1}2 -m} \nr{\pppg x^\d f}_{L^2}.
\]
For $t\geq 1$ we have on the one hand 
\begin{multline*}
\nr{\int_{-t\inv}^{t\inv} e^{-it\z} \pppg{x}^{-\d} \th_\mu^{(m-1)}(\tau) \, d\tau}_{L^{2}} \\
\lesssim \int_{-t\inv}^{t\inv} e^{t\mu} \abs \tau^{\frac {d+\rho_1}2 -m} \nr {\pppg {x}^\d f}_{L^2} \, d\tau 
\lesssim t^{m-1-\frac {d+\rho_1} 2} e^{t\mu} \nr f_{L^{2,\d}}.
\end{multline*}
On the other hand, with another integration by parts,
\begin{eqnarray*}
\lefteqn{t \nr{\int_{\abs{\tau} \geq t\inv} e^{-it\z} \pppg{x}^{-\d}\th_\mu^{(m-1)}(\tau) \, d\tau}_{L^{2}}}\\
&& \leq  e^{t\mu} \nr{\pppg{x}^{-\d} \big(\th_\mu^{(m-1)}(-t\inv) - \th_\mu^{(m-1)}(t\inv) \big)} 
+ e^{t\mu} \int_{t\inv \leq \abs \tau \leq 2} \nr{\pppg{x}^{-\d}\th_\mu^{(m)}(\tau)} \, d\tau\\
&& \lesssim t^{m-\frac {d+\rho_1}2} e^{t\mu} \| \pppg{x}^{\d} f \|_{L^{2}}.
\end{eqnarray*}
Finally,
\[
\nr{\pppg{x}^{-\d} \big( u_{\low}(t) - u_{0,\low}(t) \big)}_{L^{2}} \lesssim e^{t\mu} \pppg t^{-\frac {d+\rho_1}2} \big\|\pppg x^\d f \big\|_{L^{2}}.
\]
All the estimates being uniform in $\mu > 0$, we can let $\mu$ go to 0 to conclude.
\end{proof}

\end{document}